\documentclass[final,leqno]{amsart}
\usepackage{amsmath,amssymb,amsfonts,latexsym,stmaryrd}
\usepackage{graphicx,float,epsfig} 
\usepackage{mathrsfs} 
\usepackage{float} 
\usepackage{color}
\usepackage{setspace} 

\DeclareGraphicsExtensions{ .eps,.ps} \usepackage{subfig}
\usepackage{multirow}
\usepackage{url}
\usepackage{diagbox}
\raggedbottom 
\usepackage[linesnumbered,algoruled,boxed]{algorithm2e}
\usepackage[colorlinks=false, linkcolor=blue,  citecolor=OliveGreen,  pdfstartview={}{}]{hyperref} 
\usepackage[dvipsnames]{xcolor}

\usepackage{svg}
\usepackage{amsmath}

\def\O{\Omega}

\newtheorem{remark}{Remark}[section]
\newtheorem{lemma}{Lemma}[section]
\newtheorem{theorem}{Theorem}[section]
\newtheorem{prop}{Proposition}[section]

%
%


\newcommand{\jump}[1]{\left\llbracket #1 \right\rrbracket}
\newcommand{\jumpp}[1]{\llbracket #1 \rrbracket}

\usepackage{booktabs}
\usepackage{float}

\newcommand\bu{\boldsymbol{u}}
\newcommand\bv{\boldsymbol{v}}
\newcommand\bw{\boldsymbol{w}}

\newcommand\bn{\boldsymbol{n}}

\newcommand\curl{\textbf{\text{curl}\,}}



\newcommand\bT{\boldsymbol{T}}

\newcommand\0{\mathbf{0}}

\newcommand\bxi{\boldsymbol{\xi}}


\def\CT{{\mathcal T}}


\newcommand{\dd}{\texttt{d}}


\newcommand\bsig{\boldsymbol{\sigma}}
\newcommand\btau{\boldsymbol{\tau}}
\newcommand\bPi{\boldsymbol{\Pi}}

\newcommand\R{\mathbb{R}}


\renewcommand\H{\mathrm{H}}
\renewcommand\L{\mathrm{L}}


\renewcommand\O{\Omega}



\newcommand\bdiv{\mathop{\mathbf{div}}\nolimits}

\renewcommand\div{\mathop{\mathrm{div}}\nolimits}

\newcommand\tr{\mathop{\mathrm{tr}}\nolimits}


\newcommand\LO{\L^2(\O)}

\newcommand\err{\texttt{err}}
\newcommand\eff{\texttt{eff}}

\newcommand\ds{\displaystyle}

\newcommand{\vertiii}[1]{{\left\vert\kern-0.25ex\left\vert\kern-0.25ex\left\vert #1 
    \right\vert\kern-0.25ex\right\vert\kern-0.25ex\right\vert}}

\begin{document}

\title[AFEM for a mixed formulation of the Stokes eigenproblem]
{A posteriori analysis for a mixed formulation of the Stokes spectral problem}


\author{Felipe Lepe}
\address{GIMNAP-Departamento de Matem\'atica, Universidad del B\'io - B\'io, Casilla 5-C, Concepci\'on, Chile.}
\email{flepe@ubiobio.cl}
\thanks{The first author was partially supported by
ANID-Chile through FONDECYT project 11200529.\\
The second author was partially supported by the National
Agency for Research and Development, ANID-Chile through project Anillo of
Computational Mathematics for Desalination Processes ACT210087, FONDECYT Postdoctorado project 3230302, and by project Centro de Modelamiento Matemático (CMM), FB210005, BASAL funds for centers of excellence.}


\author{Jesus Vellojin}
\address{GIMNAP-Departamento de Matem\'atica, Universidad del B\'io - B\'io, Casilla 5-C, Concepci\'on, Chile.}
\email{jvellojin@ubiobio.cl}


\subjclass[2000]{Primary 34L15, 34L16, 35Q35,35R06, 65N15, 65N50, 76D07, 76M10}
	
\keywords{Mixed problems, eigenvalue problems,a posteriori error estimates, Stokes equations}

\begin{abstract}
In two and three dimensions, we design and analyze a posteriori error estimators for the mixed Stokes eigenvalue problem. The unknowns on this  mixed formulation are the pseudotress, velocity and pressure. With a lowest order mixed finite element scheme, together with  a postprocressing technique, we prove that the proposed estimator is reliable and efficient.  We illustrate the results with several numerical tests in two and three dimensions  in order to assess the performance of the estimator.
\end{abstract}

\maketitle

\section{Introduction}
\label{sec:intro}
Adaptive refinement strategies are an important subject of study in the numerical analysis 
of partial differential equations, since it is important to analyze if the proposed numerical schemes are
able to recover the optimal order of convergence when the solutions are not smooth enough. In the context of elliptic load problems, adaptive strategies have important applications and results in continuum mechanics. In particular, for the source Stokes problem the literature is abundant and well developed where different methods and formulations have been studied,  as in for instance
\cite{MR3556402,MR3892359, MR2293249,MR2970405,MR2594823,MR3568154,MR2871298} and the references therein.

For
eigenvalue problems, this is an ongoing topic where the  a posteriori analysis is currently being  developed for different problems, see \cite{MR4279087, MR3712172, MR3918688, MR3047040, LRV_vorticity, https://doi.org/10.48550/arxiv.2201.03658, MR2473688,  MR4050542, MR3715326}, just for mention some of the most recent. The reader can also see the references on  this articles for  further discussion.

Our interest is to continue with our research program related to a posteriori estimators for mixed eigenvalue problems that we begin in \cite{https://doi.org/10.48550/arxiv.2201.03658}. More precisely, the  design of  a posteriori error estimates for a mixed formulation of the Stokes spectral problem, recently introduced in   
\cite{MR4430561}, where a rigorous a priori analysis is performed for two families of mixed finite elements in two and three dimensions.  This formulation is able to consider the pressure and avoid it, leading to equivalent problems in the continuous level. Despite to this fact, the discrete formulations, the one containing the pressure as unknown and the one that avoids it, are not equivalent and hence, different discrete eigenvalue problems must be analyzed. However, for the a posteriori analysis, we will show that it  is possible to develop an a posteriori estimator for the pseudostress-pressure-velocity formulation and focus only on the analysis for this estimator, since in its definition, an a posteriori estimator for the pseudostress-velocity is contained, implying that all the analysis related to efficiency and reliability can be performed for both discrete formulations simultaneously.

From the above, and in order to complete the study of the pseudostress-pressure-velocity formulation for the Stokes eigenproblem, we propose a residual-based a posteriori error estimator. The analysis is performed for eigenvalues with simple multiplicity and its associated eigenfunctions. Using  a superconvergence result, we are able to control the high order terms that naturally appear when this kind of analysis is developed. The a posteriori estimator is constructed by means of lowest order Raviart-Thomas (RT) elements, suitable defined for  tensorial fields, which are considered to approximate the pseudotress tensor, whereas the velocity and pressure are approximated with piecewise linear functions. This is not the only alternative that we can consider as a numerical scheme for this formulation as is stated in \cite{MR4430561}, where Brezzi-Douglas-Marini (BDM) elements can be considered as an alternative to approximate the pseudostress.  However and for simplicity, the analysis is carried only with Raviart-Thomas elements, whereas in the numerical tests we do consider  the BDM family in order to observe the performance of the  adaptive algorithm  with this family  of finite elements. In addition, the mathematical and numerical analysis proposed in this study considers homogeneous Dirichlet  boundary conditions. However,  mixed boundary conditions can be also considered, and the analysis can be performed with minor modifications respect to the present contribution.

The paper is organized as follows: In section \ref{sec:model} we present the Stokes eigenvalue problem and the mixed formulation in consideration.  Also we summarize some necessary results to perform the analysis.   Section \ref{sec:fem}  is devoted to present the mixed finite element discretization of the Stokes eigenvalue problem. More precisely, we present the the lowest order Raviart-Thomas elements and its approximation properties, correctly adapted for the tensorial framework of the formulation. The core of our paper is  section \ref{sec:apost}, where we introduce the a posteriori error estimators for the full and reduced eigenvalue problems, the technical results needed to perform the analysis, and the results that establish that the error and the estimator are equivalent. Finally, in Section \ref{sec:numerics} we report numerical tests to assess the performance of the proposed adaptive scheme in two and three dimensions, proving  experimentally the efficiency and reliability of the  a posteriori estimators.

\subsection{Notations and preliminaries}
The following are some of the notations that will be used in this work. Given $n\in\{ 2,3\}$, we denote by $\mathbb{R}^{n}$ and $\mathbb{R}^{n\times n}$ the space of vectors and tensors of order $n$ with entries in $\mathbb{R}$, respectively. The symbol $\mathbb{I}$ represents the indentity matrix of $\mathbb{R}^{n\times n}$. Given any $\boldsymbol{\tau}:=(\tau_{ij})$ and $\boldsymbol{\sigma}:=(\sigma_{ij})\in \mathbb{R}^{n\times n}$, we write
\[
	\boldsymbol{\tau}^{\texttt{t}}:=(\tau_{ji}), \quad \tr(\boldsymbol{\tau}):=\sum_{i=1}^{n}\tau_{ii}, \quad  \boldsymbol{\tau:\sigma}:=\sum_{i,j=1}^{n} \tau_{ij}\,\sigma_{ij},\quad \mbox{and}  \quad\btau^\texttt{d}:=\btau- \frac{1}{n}\tr(\btau)\mathbb{I}
\]
to refer to the transpose, the trace, the tensorial product between $\boldsymbol{\tau}$ and $\boldsymbol{\sigma}$, and the deviatoric tensor of $\btau$, respectively. 

For $s\geq 0$, we denote as $\| \cdot \|_{s, \O}$ the norm of the Sobolev space $\H^{s}(\O)$, $[\H^{s}(\O)]^n$ or
$\mathbb{H}^s(\O):=[\H^{s}(\O)]^{n\times n}$ with $n\in\{2,3\}$  for scalar, vector,  and tensorial fields, respectively, with the convention $\H^0(\O):=\LO$, $[\H^0(\O)]^n:=[\LO]^n$, and $\mathbb{H}^0(\O):=\mathbb{L}^2(\O)$. Furthermore, with $\div$ denoting the usual divergence operator, we define the Hilbert space
\[
	\H(\div, \O):=\{ \boldsymbol{\tau} \in \L^{2}(\O) \,:\, \div(\boldsymbol{\tau}) \in \L^{2}(\O) \},
\] 
whose norm is given by $ \| \boldsymbol{\tau} \|_{\div, \O}^{2}:= \|\boldsymbol{\tau} \|_{0,\O}^{2} + \|\div(\boldsymbol{\tau}) \|_{0,\O}^{2} $. The space of matrix valued functions whose rows belong to $\H(\bdiv, \O)$ will be denoted $\mathbb{H}(\bdiv, \O)$ where $\bdiv$ stands for the action of $\div$ along each row of a tensor. Also, we introduce the space
$$\mathbb{H}(\curl, \Omega):=\{\bw\in\mathbb{L}^2(\O):\,\curl\bw\in\mathbb{L}^2(\O)\},$$
which is endowed with its natural norm.

Finally,  the relation $\texttt{a} \lesssim \texttt{b}$ indicates that $\texttt{a} \leq C \texttt{b}$, with a positive constant $C$ which is independent of $\texttt{a}$,  $\texttt{b}$ and the mesh size $h$, which will be introduced in Section \ref{sec:fem}. Similarly, we define $a\gtrsim b$ to denote $a\geq Cb$, with $C$ as above.

\section{The Stokes spectral problem}
\label{sec:model}
Introducing the pseudotress tensor $\bsig:=2\mu\nabla\bu-p\mathbb{I}$, the Stokes eigenvalue problem 
of our interest is the following:
\begin{equation}
\label{def:stokes_eigen}
\left\{
\begin{array}{rccc}
\bdiv\bsig&=&-\lambda\bu&\quad\text{in}\,\O\\
\bsig-2\mu\nabla\bu+p\mathbb{I}&=&\boldsymbol{0}&\quad\text{in}\,\O\\
\bdiv\bu&=&0&\quad\text{in}\,\O \\
\bu&=&\boldsymbol{0}&\quad\text{on}\,\partial\O,
\end{array}
\right.
\end{equation}
where $\mu$ is the kinematic viscosity and $\bdiv$ must be understood as the divergence of any tensor applied 
along on each row. As is commented in \cite{MR2594823}, the pressure and the pseudostress tensor are related
through the following identity
$
\ds p=-\tr(\bsig)/n
$
in $\O.$ This identity holds since $\tr(\nabla\bu)=\div\bu=0$. Hence, problem \eqref{def:stokes_eigen} can be rewritten as the following system:
\begin{equation}
\label{def:stokes_eigen2}
\left\{
\begin{array}{rccc}
\bdiv\bsig&=&-\lambda\bu&\quad\text{in}\,\O\\
\bsig-2\mu\nabla\bu+p\mathbb{I}&=&\boldsymbol{0}&\quad\text{in}\,\O\\
\displaystyle p+\frac{1}{n}\tr(\bsig)&=&0&\quad\text{in}\,\O \\
\bu&=&\boldsymbol{0}&\quad\text{on}\,\partial\O.
\end{array}
\right.
\end{equation}

A variational formulation for \eqref{def:stokes_eigen2} in terms of the deviatoric tensors $\bsig^\texttt{d}$ and $\btau^\texttt{d}$ is (see for example \cite{MR3860570}):  Find $\lambda\in\mathbb{R}$ and the triplet $((\bsig,p),\bu)\in\mathbb{H}(\bdiv,\O)\times \L^2(\O)\times[\L^2(\O)]^n$ such that 
\begin{align}
	&\frac{1}{2\mu}\int_{\O}\bsig^{\dd}:\btau^{\dd}+\frac{n}{2\mu}\int_{\O}\left(p+\frac{1}{n}\tr(\bsig)\right)\left(q+\frac{1}{n}\tr(\btau)\right)+\int_{\O}\bu \cdot \bdiv\btau=0,\label{eq:weak-principal1}\\
	&\int_{\O}\bv \cdot \bdiv\bsig=-\lambda\int_{\O}\bu\cdot\bv,\label{eq:weak-principal2}
\end{align}
for all $((\btau,q),\bv)\in\mathbb{H}(\bdiv,\O)\times \L^2(\O)\times[\L^2(\O)]^n$. However, the solution for this problem is not unique if homogeneous Dirichlet conditions on the whole boundary are considered \cite[Lemma 2.1]{MR2594823}. This is circumvented by requiring that $\bsig\in\mathbb{H}_0$, where the space $\mathbb{H}_0$ is given in the decomposition $\mathbb{H}(\bdiv,\O)=\mathbb{H}_0\oplus\mathbb{R}\mathbb{I}$, with
\begin{equation*}
	\mathbb{H}_0:=\left\{\btau\in\mathbb{H}(\bdiv,\O)\,:\,\int_{\O}\tr(\btau)=0\right\}.
\end{equation*}   
Here, and in the rest of the paper, we will assume that $\bsig\in\mathbb{H}_0$.

For simplicity, we define $\mathbb{H}:=\mathbb{H}_0\times \L^2(\O)$. Hence, following \cite[Lemma 2.2]{MR2594823} we have that $\bsig\in \mathbb{H}_0$ is solution of \eqref{eq:weak-principal1}--\eqref{eq:weak-principal2}, which is restated as: Find $\lambda\in\mathbb{R}$ and the triplet $((\boldsymbol{0},0),\boldsymbol{0})\neq ((\bsig,p),\bu)\in\mathbb{H}\times \L^2(\O)\times[\L^2(\O)]^n$ such that
\begin{align}
\label{eq1}a((\bsig,p),(\btau,q))+b(\btau,\bu)&=0\,\,\,\,\quad\quad\quad\forall (\btau,q)\in\mathbb{H},\\
\label{eq2}b(\bsig,\bv)&=-\lambda(\bu,\bv)\quad\forall\bv\in  \mathbf{Q},\,
\end{align}
where  $\mathbf{Q}:=[\L^2(\O)]^n$ and the bilinear forms $a:\mathbb{H}\times \mathbb{H}\rightarrow\mathbb{R}$ and $b:\mathbb{H}\times \mathbf{Q}\rightarrow\mathbb{R}$ are defined by
\begin{equation*}
a((\bxi,r),(\btau,q)):=\frac{1}{2\mu}\int_{\O}\bsig^{\dd}:\btau^{\dd}+\frac{n}{2\mu}\int_{\O}\left(r+\frac{1}{n}\tr(\bxi)\right)\left(q+\frac{1}{n}\tr(\btau)\right),
\end{equation*}
and 
\begin{equation*}
b(\bxi,\bv):=\int_{\O}\bv\cdot\bdiv\bxi.
\end{equation*}

\begin{remark}
	In \cite{MR4430561} is considered a reduced formulation where the pressure $p$ can be eliminated. For instance, we can consider the problem: Find $\lambda\in\mathbb{R}$ and $(\0,\0) \neq(\bsig,\bu)\in \mathbb{H}_0\times \mathbf{Q}$ such that
	\begin{align}
		\label{eq1_reduced}a_0(\bsig,\btau)+b(\btau,\bu)&=0\,\,\,\,\quad\quad\quad\forall \btau\in \mathbb{H}_0,\\
		\label{eq2_reduced}b(\bsig,\bv)&=-\lambda(\bu,\bv)\quad\forall\bv\in \mathbf{Q},
	\end{align}
	where $a_0: \mathbb{H}_0\times \mathbb{H}_0\rightarrow\mathbb{R}$ is a bounded bilinear form defined by
	\begin{equation*}
		\displaystyle a_0(\bxi,\btau):=\frac{1}{2\mu}\int_{\O}\bxi^{\texttt{d}}:\btau^{\texttt{d}}\quad\forall (\bxi,\btau)\in \mathbb{H}_0\times \mathbb{H}_0.
	\end{equation*}
	The analysis can be performed with this reduced formulation, but for this paper, we are interested on \eqref{eq1}--\eqref{eq2}. Moreover, at discrete level, the a posteriori estimator for the FEM discretization of  \eqref{eq1}--\eqref{eq2} contains terms of the reduced problem. Hence, we are in some way considering both problems at the same time.
\end{remark}
We recall that \eqref{eq1}--\eqref{eq2} and \eqref{eq1_reduced}--\eqref{eq2_reduced}  are equivalent, however their finite element counterparts are not  (see \cite{MR4430561} for instance).

From  \cite{MR975121,MR1600081} we have the following regularity result for the Stokes spectral problem.
\begin{theorem}
\label{th:reg_velocity}
There exists $s>0$ such that $\bu\in [\H^{1+s}(\Omega)]^n$ and $p\in \H^s(\Omega)$.
\end{theorem}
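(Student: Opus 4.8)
The plan is to reduce the spectral problem to a source Stokes problem and then invoke the classical elliptic regularity theory for the Stokes system on the given domain. Given an eigenpair $(\lambda, ((\bsig,p),\bu))$ of \eqref{eq1}--\eqref{eq2}, one recognizes that $(\bu,p)$ solves the stationary Stokes problem $-2\mu\Delta\bu + \nabla p = \lambda\bu$, $\div\bu=0$ in $\O$, $\bu=\0$ on $\partial\O$, with right-hand side $\bF:=\lambda\bu$. Indeed, unfolding \eqref{eq1} recovers the constitutive relation $\bsig=2\mu\nabla\bu-p\mathbb{I}$ together with $p=-\tr(\bsig)/n$, and \eqref{eq2} encodes $\bdiv\bsig=-\lambda\bu$ in the distributional sense; substituting the former into the latter yields exactly the Stokes momentum balance with load $\bF$. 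Since $\bu\in[\L^2(\O)]^n$ a priori, the first step is a bootstrap: the load $\bF=\lambda\bu$ lies in $[\L^2(\O)]^n$, hence the standard well-posedness of the Stokes problem gives $\bu\in[\H^1_0(\O)]^n$ and $p\in\L^2(\O)/\mathbb{R}$, which is already enough to feed the regularity shift.

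Next I would apply the regularity results of \cite{MR975121,MR1600081} for the Stokes operator. On a bounded domain (polygonal/polyhedral, or with Lipschitz boundary), there is a domain-dependent exponent $s>0$ — namely $s=1$ for smooth or convex domains, and $0<s<1$ governed by the largest interior angle (resp. edge/corner singularities in three dimensions) in the nonconvex case — such that the solution operator maps $[\L^2(\O)]^n$ continuously into $[\H^{1+s}(\O)]^n\times \H^s(\O)$. Applying this with $\bF=\lambda\bu\in[\L^2(\O)]^n$ immediately delivers $\bu\in[\H^{1+s}(\O)]^n$ and $p\in\H^s(\O)$, with the norm estimate $\|\bu\|_{1+s,\O}+\|p\|_{s,\O}\lesssim|\lambda|\,\|\bu\|_{0,\O}$, which is the claimed statement.

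The only genuine subtlety — and the point I would be careful about — is making sure the reduction from the mixed formulation \eqref{eq1}--\eqref{eq2} to the primal Stokes system is legitimate: one must verify that the pair $(\bu,p)$ extracted from the mixed solution is indeed the variational solution of the primal Stokes problem with load $\lambda\bu$, so that the cited regularity theory (stated for the primal formulation) applies verbatim. This is the equivalence between the mixed pseudostress formulation and the classical velocity–pressure formulation, which follows from \cite[Lemma 2.1, Lemma 2.2]{MR2594823} (already invoked above) together with the identity $p=-\tr(\bsig)/n$; once this identification is in place, everything else is a direct citation. A secondary, purely bookkeeping point is that $s$ may need to be taken as the minimum of the velocity and pressure shift exponents, but since the two agree in the standard Stokes regularity statements this causes no trouble.
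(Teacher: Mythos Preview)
Your proposal is correct and, in fact, more detailed than what the paper provides: the paper gives no proof at all for Theorem~\ref{th:reg_velocity}, merely prefacing the statement with ``From \cite{MR975121,MR1600081} we have the following regularity result for the Stokes spectral problem.'' Your reduction to a source Stokes problem with right-hand side $\lambda\bu\in[\L^2(\O)]^n$ followed by the regularity shift from those same references is precisely the standard justification that underlies such a citation, so the approaches coincide.
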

The well posedness of \eqref{eq1}--\eqref{eq2} implies the existence of an operator $\mathcal{A}:\mathbb{H}\times\mathbf{Q}\rightarrow(\mathbb{H}\times\mathbf{Q})'$, induced by the left-hand side of \eqref{eq1}--\eqref{eq2}, which is an isomorphism that satisfies $\|\mathcal{A}((\btau,q),\bv)\|_{(\mathbb{H}\times\mathbf{Q})'}\gtrsim\|((\btau,q),\boldsymbol{v})\|_{\mathbb{H}\times\mathbf{Q}}$, for all $((\btau,q),\boldsymbol{v})\in\mathbb{H}\times\mathbf{Q}$, that  is equivalent to the following inf-sup condition
\begin{equation}
\label{eq:complete_infsup}
\|((\boldsymbol{\tau},q), \bv)\|_{\mathbb{H}\times \mathbf{Q}}\,\lesssim \displaystyle\sup_{\underset{((\boldsymbol{\xi},r),\boldsymbol{w})\neq \boldsymbol{0}}{((\boldsymbol{\xi},r),\boldsymbol{w})\in \mathbb{H}\times\mathbf{Q}}}\frac{a((\boldsymbol{\tau},q),(\boldsymbol{\xi},r)) + b(\boldsymbol{\xi},\bv) + b(\btau,\boldsymbol{w})}{\|((\boldsymbol{\xi},r),\boldsymbol{w})\|_{\mathbb{H}\times \mathbf{Q}}}.
\end{equation}


\section{The mixed element method}
\label{sec:fem}
The present section deals with the finite element approximation for the eigenvalue 
problem \eqref{eq1}--\eqref{eq2}.  For $\Omega\subset\mathbb{R}^\texttt{d}$, with $\texttt{d}\in\{2,3\}$,  let $\mathcal{T}_h$ be a shape regular family of meshes which subdivide the domain $\bar \Omega$ into  triangles/tetrahedra that we denote by $T$. Let $h_T$ be the diameter of a triangle/tetrahedron $T$ of the triangulation and let us define $h:=\max\{h_T\,:\, T\in \CT_h\}$. Given an integer $\ell\geq 0$ and a subset $D$ of $\mathbb{R}^n$, we denote by $\mathbb{P}_\ell(D)$ the space of polynomials of degree at most $\ell$ defined in $D$. With these ingredients at hand, for $\ell=0$ we define the local Raviart-Thomas space of the lowest order as follows  (see \cite{MR3097958})
 \begin{equation*}
 \mathbf{RT}_0(T)=[\mathbb{P}_0(T)]^n\oplus \mathbb{P}_0(T)\boldsymbol{x},
 \end{equation*}
 where $\boldsymbol{x}\in\mathbb{R}^n$. With this local space at hand, we define the global Raviart-Thomas space, which we denote by $\mathbb{RT}_0(\CT_h)$, as follows
 \begin{equation*}
 \mathbb{RT}_0(\CT_h):=\{\btau\in\mathbb{H}(\bdiv,\O)\,:\,(\tau_{i1},\cdots,\tau_{in})^{\texttt{t}}\in\mathbf{RT}_0(T)\,\,\forall i\in\{1,\ldots,n\},\,\,\forall T\in\CT_h\}.
 \end{equation*}
Also  we introduce the global space of piecewise polynomials of degree $\leq k$ defined by
  \begin{equation*}
 	\mathbb{P}_k(\CT_h):=\{v\in\L^2(\O)\,:\, v|_T\in\mathbb{P}_k(T)\,\,\forall T\in\CT_h\}.
 \end{equation*}

With these discrete spaces at hand, we recall some approximation properties that hold for each of them (see \cite{MR2009375} for instance).  

Let $\bPi_h:\mathbb{H}^t (\O)\rightarrow \mathbb{RT}_0(\CT_h)$ be the Raviart-Thomas interpolation operator. For $t\in (0,1/2]$ and $\btau\in\mathbb{H}^t(\O)\cap\mathbb{H}(\bdiv;\O)$ the following error estimate holds true
\begin{equation} \label{daniel1}
	\|\btau-\bPi_h\btau\|_{0,\O}\lesssim h^t \big(\|\btau\|_{t,\O}+\|\bdiv\btau\|_{0,\O}\big).
\end{equation}
Also, for $\btau\in\mathbb{H}^t(\O)$ with $t>1/2$, there holds
\begin{equation}\label{daniel2}
	\|\btau-\bPi_h\btau\|_{0,\O}\lesssim h^{\min\{t,1\}} |\btau|_{t,\O}.
\end{equation} 

Let $\mathcal{P}_h:[\L^2(\O)]^n\rightarrow[\mathbb{P}_0(\mathcal{T}_h)]^n$ be the $\L^2(\O)$-orthogonal projector, which satisfies the following commuting diagram:
\begin{equation}
	\label{eq:commutative}
	\bdiv(\bPi_h\btau)=\mathcal{P}_h(\bdiv\btau),
\end{equation}
and, if $\bv\in\H^t (\O)^{n}$ with $t>0$, $\mathcal{P}_h$ it also satisfies
\begin{equation}\label{daniel3}
	\|\bv-\mathcal{P}_h\bv\|_{0,\O}\lesssim h^{\min\{t,1 \}} |\bv|_{t,\O}.
\end{equation}

Finally, for each $\btau\in\mathbb{H}^t(\O)$ such that $\bdiv\btau\,\in[\H^t (\O)]^{n}$, there holds
\begin{equation} \label{daniel4}
	\|\bdiv(\btau-\bPi_h\btau)\|_{0,\O}\lesssim h^{\min\{t,1\}} |\bdiv\btau|_{t,\O}.
\end{equation} 

It is worth noting that all the following analysis is also valid if the Brezzi-Douglas-Marini family, namely $\mathbb{BDM}$, is used (see \cite{MR4430561} for details and Section \ref{sec:numerics} below).

To end this section, we define
\begin{equation*}
	\mathbb{H}_{0,h}:=\left\{ \btau_h\in\mathbb{RT}_0(\CT_h)\,\,:\,\,\int_{\O}\tr(\btau_h)=0  \right\},
\end{equation*}
and also define $Q_h:=\mathbb{P}_0(\CT_h)$,  $\mathbf{Q}_h:=[\mathbb{P}_0(\CT_h)]^n$ and $\mathbb{H}_h:=\mathbb{H}_{0,h}\times Q_h $.

\subsection{The discrete eigenvalue problems}
With the discrete spaces defined above, we are in position to introduce the discretization of 
problem \eqref{eq1}--\eqref{eq2}: Find $\lambda_h\in\mathbb{R}$ and $((\boldsymbol{0},0),\boldsymbol{0})\neq((\bsig_h, p_h),\bu_h)\in\mathbb{H}_h\times\mathbf{Q}_h$ such that
\begin{align}
	\label{eq1h}a((\bsig_h,p_h),(\btau_h,q_h))+b(\btau_h,\bu_h)&=0\,\,\,\,\quad\quad\quad\forall (\btau_h,q_h)\in\mathbb{H}_h,\\
	\label{eq2h}b(\bsig_h,\bv_h)&=-\lambda_h(\bu_h,\bv_h)\quad\forall\bv_h\in \mathbf{Q}_h.
\end{align}

Similarly as in the continuous case, it is possible to consider a reduced formulation for the discrete eigenvalue problem which  reads as follows: Find $\lambda_h\in\mathbb{R}$ and $(\boldsymbol{0},\0)\neq (\bsig_h,\bu_h)\in \mathbb{H}_{0,h}\times \mathbf{Q}_h$ such that
\begin{align}
	\label{reduced_disc_source1}a_0(\bsig_h,\btau_h)+b(\btau_h,\bu_h)&=0\,\,\,\,\quad\quad\quad\forall \btau_h\in \mathbb{H}_{0,h},\\
	\label{reduced_disc_source2}b(\bsig_h,\bv_h)&=-\lambda(\bu_h,\bv_h)\quad\forall\bv_h\in \mathbf{Q}_h.
\end{align}

A priori error estimates for problems \eqref{eq1}--\eqref{eq2} and \eqref{eq1h}--\eqref{eq2h} are derived from \cite[Theorems 4.4 and 4.5]{MR4430561}.
\begin{lemma}
\label{lema:apriorie}
Let $(\lambda, ((\boldsymbol{\sigma},p),\bu))$ be a solution of  \eqref{eq1}--\eqref{eq2} with $\|\bu\|_{0,\O}=1$, and let $(\lambda_h, ((\boldsymbol{\sigma}_h,p_h),\bu_h))$ be its finite element approximation given as the solution to \eqref{eq1h}--\eqref{eq2h} with $\|\bu_h\|_{0,\O}=1$. Then
\begin{equation*}
\|\boldsymbol{\sigma}-\boldsymbol{\sigma}_h\|_{0,\O}+\|p-p_h\|_{0,\O}+\|\bu-\bu_h\|_{0,\O}\lesssim  h^{s},
\end{equation*}
and
\begin{equation*}
|\lambda-\lambda_h|\lesssim \|\boldsymbol{\sigma}-\boldsymbol{\sigma}_h\|_{0,\O}^2+\|p-p_h\|_{0,\O}^2+\|\bu-\bu_h\|_{0,\O}^2,
\end{equation*}
where, the hidden constants are  independent of $h$, and $s>0$ as in Theorem \ref{th:reg_velocity}.
\end{lemma}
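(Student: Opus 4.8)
The plan is to follow the classical spectral approximation theory: recast problems \eqref{eq1}--\eqref{eq2} and \eqref{eq1h}--\eqref{eq2h} as eigenvalue problems for a pair of solution operators, prove uniform convergence of the discrete operator, and then invoke the Babuška--Osborn estimates for simple eigenvalues; the source-problem ingredients are exactly those of \cite{MR4430561}. For $\bF\in\mathbf{Q}$, let $T\bF:=\bu\in\mathbf{Q}$ be the velocity component of the unique $((\bsig,p),\bu)\in\mathbb{H}\times\mathbf{Q}$ solving $a((\bsig,p),(\btau,q))+b(\btau,\bu)=0$ for all $(\btau,q)\in\mathbb{H}$ and $b(\bsig,\bv)=-(\bF,\bv)$ for all $\bv\in\mathbf{Q}$, which is well posed thanks to \eqref{eq:complete_infsup}; define $T_h\colon\mathbf{Q}\to\mathbf{Q}_h$ in the same way from \eqref{eq1h}--\eqref{eq2h}. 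A short computation using the symmetry of $a$ and the fact that the form $b$ appears on both sides of the coupled system shows that $T$ is self-adjoint with respect to $(\cdot,\cdot)$, and Theorem~\ref{th:reg_velocity} together with the compact embedding $[\H^{1+s}(\O)]^n\hookrightarrow[\L^2(\O)]^n$ shows that $T$ is compact; the eigenpairs $(\lambda,((\bsig,p),\bu))$ of \eqref{eq1}--\eqref{eq2} correspond bijectively to the eigenpairs $(\lambda^{-1},\bu)$ of $T$, and similarly for the discrete operator $T_h$.

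Next I would establish the uniform bound $\|(T-T_h)\bF\|_{0,\O}\lesssim h^{s}\|\bF\|_{0,\O}$ for all $\bF\in\mathbf{Q}$. This is precisely the a priori error estimate for the mixed discretization of the associated source problem: a Céa-type inequality, obtained from the continuous inf-sup condition \eqref{eq:complete_infsup} and its discrete analogue established in \cite{MR4430561}, bounds the error in $\mathbb{H}\times\mathbf{Q}$ by the best approximation errors in $\mathbb{RT}_0(\CT_h)$, $\mathbb{P}_0(\CT_h)$ and $[\mathbb{P}_0(\CT_h)]^n$, which in turn are controlled by \eqref{daniel1}--\eqref{daniel4} using the regularity $\bu\in[\H^{1+s}(\O)]^n$, $\bsig\in\mathbb{H}^{s}(\O)$ and $\bdiv\bsig=-\bF$; this is \cite[Thm.~4.4]{MR4430561}. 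In particular $\|(T-T_h)|_E\|_{0,\O}\to0$, where $E$ is the one-dimensional eigenspace of $T$ associated with $\lambda^{-1}$, so that for $h$ small enough no spurious eigenvalues appear and there is a simple discrete eigenvalue $\lambda_h^{-1}$ converging to $\lambda^{-1}$.

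With this at hand, the Babuška--Osborn estimate for a simple eigenvalue gives, after a suitable sign choice of $\bu_h$, $\|\bu-\bu_h\|_{0,\O}\lesssim\|(T-T_h)|_E\|_{0,\O}\lesssim h^{s}$. To upgrade this velocity bound to the full triple, I would subtract \eqref{eq1h} from \eqref{eq1}, obtaining $a((\bsig-\bsig_h,p-p_h),(\btau_h,q_h))=-b(\btau_h,\bu-\bu_h)$ for all $(\btau_h,q_h)\in\mathbb{H}_h$; combining this Galerkin-type identity with the discrete inf-sup condition and \eqref{daniel1}--\eqref{daniel4} yields $\|\bsig-\bsig_h\|_{0,\O}+\|p-p_h\|_{0,\O}\lesssim h^{s}$, which proves the first estimate. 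For the eigenvalue, I would use that $\bdiv\bsig=-\lambda\bu$ and $\bdiv\bsig_h=-\lambda_h\bu_h$ hold pointwise (the divergence of an $\mathbb{RT}_0$ field and $\bu_h$ both lie in $[\mathbb{P}_0(\CT_h)]^n$), together with the standard self-adjoint eigenvalue identity derived by testing the error equations with the errors and invoking the symmetry of $a$ (as in \cite[Thm.~4.5]{MR4430561}); this leads to $|\lambda-\lambda_h|\lesssim\|\bsig-\bsig_h\|_{0,\O}^2+\|p-p_h\|_{0,\O}^2+\|\bu-\bu_h\|_{0,\O}^2+|\lambda-\lambda_h|^2$, and absorbing the last term for $h$ small gives the second estimate.

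The main obstacle is the double-order bound for $|\lambda-\lambda_h|$: one must organize the eigenvalue-error identity so that \emph{every} contribution is genuinely quadratic in the component errors, which requires exploiting both the Galerkin orthogonality of the source problem and the symmetry of $a$, and then disposing of the divergence part of the pseudostress error by means of the pointwise relation $\bdiv\bsig_h=-\lambda_h\bu_h$, so that the final estimate involves only the $\L^2(\O)$ norms appearing in the statement. The remaining steps are routine given \eqref{daniel1}--\eqref{daniel4}, \eqref{eq:complete_infsup}, Theorem~\ref{th:reg_velocity} and the abstract theory.
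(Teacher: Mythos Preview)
Your proposal is correct and follows the same route the paper relies on: the paper does not give an independent proof of this lemma but simply invokes \cite[Theorems~4.4 and~4.5]{MR4430561}, and your sketch---defining the solution operators $T,T_h$, proving $\|T-T_h\|\lesssim h^{s}$ via the source-problem C\'ea estimate and the regularity of Theorem~\ref{th:reg_velocity}, and then applying the Babu\v{s}ka--Osborn theory together with the symmetric eigenvalue-error identity---is precisely the argument of that reference. The only place worth tightening is the ``upgrade'' step: rather than invoking a discrete inf-sup for $a$ alone, it is cleaner to write $((\bsig,p),\bu)=S(\lambda\bu)$ and $((\bsig_h,p_h),\bu_h)=S_h(\lambda_h\bu_h)$ for the full source operators and split $S(\lambda\bu)-S_h(\lambda_h\bu_h)=(S-S_h)(\lambda\bu)+S_h(\lambda\bu-\lambda_h\bu_h)$, which immediately yields the bound on the whole triple from the source estimate and the already obtained control of $\|\bu-\bu_h\|_{0,\O}$ and $|\lambda-\lambda_h|$.
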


We end this section with the following technical result, which  states that for $h$ small enough, except 
for $\lambda_h$, the rest of the eigenvalues of \eqref{eq1h}--\eqref{eq2h}  are well separated from $\lambda$ (see  \cite{MR3647956}). 

\begin{prop}\label{separa_eig}
Let us enumerate the eigenvalues of  systems \eqref{eq1h}--\eqref{eq2h}  and  \eqref{eq1}--\eqref{eq2} in increasing order as follows: $0<\lambda_1\leq\cdots\lambda_i\leq\cdots$ and 
$0<\lambda_{h,1}\leq\cdots\lambda_{h,i}\leq\cdots$. Let us assume  that $\lambda_J$ is a simple eigenvalue of \eqref{eq1h}--\eqref{eq2h} . Then, there exists $h_0>0$ such that
\begin{equation*}
|\lambda_J-\lambda_{h,i}|\geq\frac{1}{2}\min_{j\neq J}|\lambda_j-\lambda_J|\quad\forall i\leq \dim\mathbb{H}_h,\,\,i\neq J,\quad \forall h<h_0.
\end{equation*}
\end{prop}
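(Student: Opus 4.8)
The plan is to derive this spectral-gap statement from the convergence of eigenvalues, exactly as in the classical treatment of eigenvalue approximation (cf.\ \cite{MR3647956}). First I would invoke the standard convergence result for the discrete eigenvalues of \eqref{eq1h}--\eqref{eq2h}: since the solution operator associated with \eqref{eq1}--\eqref{eq2} is compact and its discrete counterpart converges to it in norm (which is what underlies Lemma \ref{lema:apriorie}), each eigenvalue $\lambda_i$ of the continuous problem is the limit, as $h\to 0$, of exactly one eigenvalue $\lambda_{h,i}$ of the discrete problem (counting multiplicities), and the convergence is uniform on the finitely many eigenvalues below any fixed threshold. In particular, because $\lambda_J$ is simple, there is a single discrete eigenvalue $\lambda_{h,J}\to\lambda_J$, and it stays isolated from the other discrete eigenvalues for $h$ small.

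Next, set $\delta:=\min_{j\neq J}|\lambda_j-\lambda_J|>0$; this is positive precisely because the continuous spectrum is discrete with no finite accumulation point and $\lambda_J$ is simple. I would split the index set $\{i:\ i\le\dim\mathbb{H}_h,\ i\neq J\}$ into two regimes. For the ``nearby'' indices, namely those $i$ with $\lambda_i\le\lambda_J+\delta$ (a finite set, independent of $h$), uniform convergence of $\lambda_{h,i}\to\lambda_i$ gives an $h_0$ such that $|\lambda_{h,i}-\lambda_i|<\delta/2$ for all such $i$ and all $h<h_0$; combined with $|\lambda_i-\lambda_J|\ge\delta$ this yields $|\lambda_{h,i}-\lambda_J|\ge\delta/2$. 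For the ``far'' indices, those with $\lambda_i>\lambda_J+\delta$, I would use that the discrete eigenvalues are bounded below by the corresponding continuous ones up to a vanishing perturbation (again from norm convergence of the solution operators, or simply from a min–max comparison), so that $\lambda_{h,i}\ge\lambda_i-\delta/2>\lambda_J+\delta/2\ge\lambda_{h,J}+\text{(something)}$; shrinking $h_0$ once more if needed, $|\lambda_{h,i}-\lambda_J|\ge\delta/2$. Taking the minimum of the two thresholds gives a single $h_0$ that works uniformly, which is the claim.

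The main obstacle is handling the ``far'' regime cleanly: one must ensure no spurious discrete eigenvalue of \eqref{eq1h}--\eqref{eq2h} drifts down into a neighborhood of $\lambda_J$ as $h$ varies, and that the lower bound $\lambda_{h,i}\gtrsim\lambda_i$ holds with constants independent of $h$ and $i$. This is where the spectral correctness of the mixed scheme — absence of spurious modes, guaranteed by the inf–sup stability \eqref{eq:complete_infsup} and the commuting-diagram property \eqref{eq:commutative} of the Raviart–Thomas spaces — is essential; it is precisely the hypothesis under which the cited reference \cite{MR3647956} applies. The ``nearby'' regime, by contrast, is an immediate consequence of the eigenvalue convergence already embodied in Lemma \ref{lema:apriorie}.
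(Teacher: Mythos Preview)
The paper does not actually prove this proposition; it merely states it and cites \cite{MR3647956}. Your plan is correct and is essentially the standard argument behind that citation: eigenvalue convergence $\lambda_{h,i}\to\lambda_i$ (a consequence of norm convergence of the discrete solution operator to the continuous one, which is the spectral correctness underlying Lemma~\ref{lema:apriorie}) plus the positive gap $\delta=\min_{j\neq J}|\lambda_j-\lambda_J|$.

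One simplification is worth pointing out. Your split into ``nearby'' and ``far'' regimes, and the accompanying worry about a uniform-in-$i$ lower bound $\lambda_{h,i}\gtrsim\lambda_i$ in the tail, can be avoided entirely by exploiting the monotone ordering of the discrete eigenvalues. It suffices to control only the two immediate neighbors: from $\lambda_{h,J-1}\to\lambda_{J-1}\le\lambda_J-\delta$ and $\lambda_{h,J+1}\to\lambda_{J+1}\ge\lambda_J+\delta$ one obtains, for $h$ small enough, $\lambda_{h,J-1}<\lambda_J-\delta/2$ and $\lambda_{h,J+1}>\lambda_J+\delta/2$. Then $\lambda_{h,i}\le\lambda_{h,J-1}$ for all $i\le J-1$ and $\lambda_{h,i}\ge\lambda_{h,J+1}$ for all $i\ge J+1$ give the claim for every $i\neq J$ at once. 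This sidesteps any appeal to min--max comparisons (which are not directly available for the mixed formulation \eqref{eq1}--\eqref{eq2}) and removes the need for the absence-of-spurious-modes argument you flag as the main obstacle.
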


\section{A posteriori error analysis}
\label{sec:apost}
The aim of the following section is  to design and analyze an a posteriori error estimator for our 
mixed formulation of the Stokes spectral problem in two and three dimensions.
This implies not only the analysis of the efficiency and reliability bound, but also the control of the high order terms that naturally appear on this analysis. With this goal in mind, we will adapt the results of \cite{MR3047040} in order to obtain a superconvergence result that is needed to precisely handle the high order terms.
\subsection{Properties of the mesh}
Let us set some definitions. For $T\in\mathcal{T}_h$, let $\mathcal{E}(T)$ be the set of its faces/edges, and let $\mathcal{E}_h$ be the set of all
the faces/edges of the mesh $\mathcal{T}_h$. With these definitions at hand, we write $\mathcal{E}_h=\mathcal{E}_h(\O)\cup\mathcal{E}_h(\partial\O)$, where
\begin{equation*}
\mathcal{E}_h(\O):=\{e\in\mathcal{E}_h\,:\,e\subseteq\O\}\quad\text{and}\quad\mathcal{E}_h(\O):=\{e\in\mathcal{E}_h\,:\, e\subseteq\partial\O\}.
\end{equation*}

For each face/edge $e\in\mathcal{E}_h$ we fix a unit normal vector $\bn_e$ to $e$. Moreover, given $\btau\in\mathbb{H}(\curl; \Omega)$ and $e\in\mathcal{E}_h(\O)$, we let $\jumpp{\btau\times\bn_e}$ be the corresponding jump of the tangential traces across $e$, defined by
\begin{equation*}
\jumpp{\btau\times\bn_e}:=(\btau|_T-\btau|_{T'})\big|_e\times\bn_e,
\end{equation*} 
where $T$ and $T'$ are two elements of the mesh with common edge $e$.
For two dimensions, the tangential traces across $e$ is defined by
\begin{equation*}
	\jumpp{\btau\times\bn_e}:=\left[(\btau|_T-\btau|_{T'})\big|_e\right]\mathbf{t}_e,
	\end{equation*}
where $\mathbf{t}_e:=(-n_2,n_1)$ is the corresponding tangential vector for the facet normal $\bn_e=(n_1,n_2)$.
 
\subsection{Postprocessing}

Let us define the following space
\begin{equation*}
\mathrm{Y}_h:=\{\bv\in [\H^1(\Omega)]^n\,:\,\bv\in [P_1(T)]^n, \quad\forall T\in\mathcal{T}_h\}.
\end{equation*}
For each vertex $z$ of the elements in $\mathcal{T}_h$, we define the patch
\begin{equation*}
\omega_z:=\bigcup_{z\in T\in\mathcal{T}_h} T.
\end{equation*}

We introduce  the postprocessing operator $\Theta_h:\mathbf{Q}\rightarrow \mathrm{Y}_h$. The motivation of this operator is to  fit a piecewise linear function in the average sense, for any $\bv\in\mathbf{Q}$ at the degrees of freedom of element integrations in the following way
\begin{equation*}
\displaystyle\Theta_h\bv(z):=\sum_{T\in\omega_z}\frac{\displaystyle\int_T\bv\,dx}{|\omega_z|},
\end{equation*}
where $|\omega_z|$ denotes the facet measure (area in 2D, volume in 3D) of the patch. Let us precise that $\Theta_h\boldsymbol{v}(z)$ is defined on a vertex $z\in\omega_z$ that coincides with one of the degrees of freedom needed to 
	define a function of $\mathrm{Y}_h$. Then, $\Theta_h\boldsymbol{v}(z)$ is computed by averaging the sum of the integrals of $\boldsymbol{v}\in\mathbf{Q}$ over all the elements sharing this vertex.

Let us recall the properties that $\Theta_h$ satisfy (see \cite[Lemma 3.2, Theorem 3.3]{MR3047040}).
\begin{lemma}[Properties of the postprocessing operator]
\label{postprocessing} 
The operator $\Theta_h$ defined above satisfies the following:
\begin{enumerate}
\item For $\bu\in [\H^{1+s}(\Omega)]^n$ with $s>0$ as in Theorem \ref{th:reg_velocity} and $T\in\mathcal{T}_h$, there holds $\|\Theta_h\bu-\bu\|_{0,T}\lesssim h_T^{1+s}\|\bu\|_{1+s,\omega_T}$;
\item $\Theta_h\mathcal{P}_h\bv=\Theta_h\bv$;
\item $\|\Theta_h\bv\|_{\mathbf{Q}}\lesssim\|\bv\|_{\mathbf{Q}}$ for all $\bv\in\mathbf{Q}$.
\end{enumerate}
\end{lemma}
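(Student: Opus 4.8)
\textbf{Proof plan for Lemma \ref{postprocessing} (properties of $\Theta_h$).}
The plan is to establish the three assertions essentially in the order (2), (3), (1), since the algebraic identity (2) is the cheapest and is used implicitly in the stability bound (3), which in turn underlies the approximation estimate (1). First I would verify (2): for a fixed vertex $z$, the definition $\Theta_h\bv(z)=\sum_{T\in\omega_z}|\omega_z|^{-1}\int_T\bv\,dx$ depends on $\bv$ only through the collection of element integrals $\{\int_T\bv\,dx\}_{T\in\omega_z}$. Since $\mathcal{P}_h$ is the $\L^2(\O)$-orthogonal projection onto $[\mathbb{P}_0(\mathcal{T}_h)]^n$, it preserves the mean value on each element, i.e.\ $\int_T\mathcal{P}_h\bv\,dx=\int_T\bv\,dx$ for every $T$; hence the nodal values defining $\Theta_h\mathcal{P}_h\bv$ and $\Theta_h\bv$ coincide at every vertex, and since a function in $\mathrm{Y}_h$ is determined by its vertex values, $\Theta_h\mathcal{P}_h\bv=\Theta_h\bv$.

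Next I would prove the local stability that yields (3). Fix an element $T$ with vertices $z_0,\dots,z_n$. On $T$, $\Theta_h\bv$ is the affine interpolant of the nodal values $\Theta_h\bv(z_j)$, so by the scaling argument on the reference simplex, $\|\Theta_h\bv\|_{0,T}^2\lesssim |T|\sum_j|\Theta_h\bv(z_j)|^2$. For each vertex $z_j$ of $T$, Cauchy--Schwarz gives
\begin{equation*}
|\Theta_h\bv(z_j)|=\left|\frac{1}{|\omega_{z_j}|}\sum_{T'\in\omega_{z_j}}\int_{T'}\bv\,dx\right|\le\frac{1}{|\omega_{z_j}|}\sum_{T'\in\omega_{z_j}}|T'|^{1/2}\|\bv\|_{0,T'}\lesssim|\omega_{z_j}|^{-1/2}\|\bv\|_{0,\omega_{z_j}},
\end{equation*}
using $\sum_{T'\in\omega_{z_j}}|T'|=|\omega_{z_j}|$ and the discrete Cauchy--Schwarz inequality. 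Combining, and using shape regularity so that $|T|\lesssim|\omega_{z_j}|$ for the finitely many patches meeting $T$, we obtain $\|\Theta_h\bv\|_{0,T}^2\lesssim\sum_{j}\|\bv\|_{0,\omega_{z_j}}^2\lesssim\|\bv\|_{0,\widetilde\omega_T}^2$, where $\widetilde\omega_T$ is the (fixed, finite) union of patches of vertices of $T$. Summing over $T\in\mathcal{T}_h$ and invoking the bounded overlap of these extended patches gives $\|\Theta_h\bv\|_{\mathbf{Q}}\lesssim\|\bv\|_{\mathbf{Q}}$, which is (3).

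Finally, for (1), the standard device is to subtract a local polynomial that $\Theta_h$ reproduces. Since $\Theta_h$ restricted to a neighbourhood of $T$ is linear and is a projection onto affine functions (constants, in particular, are reproduced because for a constant $\bc$ one has $\Theta_h\bc(z)=|\omega_z|^{-1}\sum_{T'\in\omega_z}|T'|\bc=\bc$), for any constant $\bc$ we may write on $T$
\begin{equation*}
\|\Theta_h\bu-\bu\|_{0,T}\le\|\Theta_h(\bu-\bc)\|_{0,T}+\|\bu-\bc\|_{0,T}\lesssim\|\bu-\bc\|_{0,\widetilde\omega_T}.
\end{equation*}
Choosing $\bc$ to be the average of $\bu$ over $\widetilde\omega_T$ and applying the fractional Bramble--Hilbert / Deny--Lions estimate on the patch (whose diameter is $\simeq h_T$ by shape regularity), $\|\bu-\bc\|_{0,\widetilde\omega_T}\lesssim h_T^{1+s}|\bu|_{1+s,\widetilde\omega_T}\lesssim h_T^{1+s}\|\bu\|_{1+s,\omega_T}$ after enlarging the patch notation to the $\omega_T$ in the statement. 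I expect the main obstacle to be purely bookkeeping rather than conceptual: making the patch-of-patches argument clean (identifying the correct neighbourhood on which $\Theta_h$ acts linearly, and controlling all the constants through shape regularity and bounded overlap), together with invoking the fractional-order polynomial approximation on a patch that is a union of simplices rather than a single convex element — this requires either a Lipschitz-domain version of Bramble--Hilbert or a scaling argument on a reference configuration of patches. Everything else is a routine scaling computation, so I would keep those parts terse and cite \cite{MR3047040} for the details as the statement already does.
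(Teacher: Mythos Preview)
The paper itself does not prove this lemma; it merely records the three properties and cites \cite[Lemma~3.2, Theorem~3.3]{MR3047040}. Your arguments for (2) and (3) are correct and are exactly the standard ones: (2) is immediate from the fact that $\mathcal{P}_h$ preserves element averages, and (3) follows by the local scaling plus finite-overlap argument you outline.

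Your argument for (1), however, has a genuine gap. You subtract only a \emph{constant} $\bc$ and then claim that the fractional Bramble--Hilbert/Deny--Lions lemma yields $\|\bu-\bc\|_{0,\widetilde\omega_T}\lesssim h_T^{1+s}|\bu|_{1+s,\widetilde\omega_T}$. That estimate is false: best $\L^2$-approximation by constants on a domain of diameter $\simeq h_T$ gives at most one power of $h_T$, not $1+s$. Obtaining $h_T^{1+s}$ via this route requires $\Theta_h$ to reproduce \emph{affine} functions so that one may subtract a linear $p_1$ instead. You assert in passing that $\Theta_h$ ``is a projection onto affine functions'', but you prove only constant reproduction, and in fact the averaging $\Theta_h\bv(z)=|\omega_z|^{-1}\int_{\omega_z}\bv$ does \emph{not} reproduce linears on a general shape-regular mesh: for $\bv(x)=Ax$ one gets $\Theta_h\bv(z)=A\bar c_{\omega_z}$ with $\bar c_{\omega_z}$ the patch centroid, which equals $Az$ only when the patch is symmetric about the vertex $z$. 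Consequently the rate $h_T^{1+s}$ hinges either on an additional structural mesh hypothesis or on a genuinely superconvergence-type argument from \cite{MR3047040}; it is precisely this step, and not the patch bookkeeping you flag, that carries the real content. Your plan should identify affine reproduction (and the conditions under which it holds) as the nontrivial ingredient rather than treat it as a routine scaling computation.
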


Also, we have the following approximation result.
\begin{lemma}
\label{lmm:rodolfo11}
Let $(\lambda,((\boldsymbol{\sigma},p), \bu))$ and $(\lambda_{h}, ((\boldsymbol{\sigma}_{h},p_h),\bu_{h}))$ be  solutions of Problems \eqref{eq1}--\eqref{eq2} and \eqref{eq1h}--\eqref{eq2h}, respectively, with $\|\bu\|_{0,\O}=\|\bu_{h}\|_{0,\O}=1$. Then, there holds
\begin{equation*}
\|\mathcal{P}_h\bu-\bu_{h}\|_{0,\O}\lesssim h^{s}\left(\|\boldsymbol{\sigma}-\boldsymbol{\sigma}_h\|_{0,\O}+\|p-p_h\|_{0,\O}+\|\bu-\bu_h\|_{0,\O}\right),
\end{equation*}
where $s>0$ and the hidden constant is independent of $h$.
\end{lemma}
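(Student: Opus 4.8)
The strategy is to estimate $\|\mathcal P_h\bu-\bu_h\|_{0,\O}$ by exploiting the discrete and continuous inf-sup/saddle point structure, via a duality (Aubin--Nitsche type) argument tailored to the mixed formulation. First I would introduce $\be:=\mathcal P_h\bu-\bu_h\in\mathbf Q_h$ and consider the auxiliary continuous problem: find $((\bphi,\psi),\bz)\in\mathbb H\times\mathbf Q$ such that $a((\bphi,\psi),(\btau,q))+b(\btau,\bz)=0$ for all $(\btau,q)\in\mathbb H$ and $b(\bphi,\bv)=-(\be,\bv)$ for all $\bv\in\mathbf Q$; by the inf-sup condition \eqref{eq:complete_infsup} this problem is well posed, and by the regularity result (Theorem \ref{th:reg_velocity}) the data $\be$ yields $\bz\in[\H^{1+s}(\O)]^n$, $\bphi\in\mathbb H^{s}(\O)$ (and $\bdiv\bphi\in[\L^2]^n$) with the bound $\|\bz\|_{1+s,\O}+\|\bphi\|_{s,\O}+\cdots\lesssim\|\be\|_{0,\O}$. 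The key algebraic identity is obtained by testing this dual problem with the Raviart--Thomas interpolant $\bPi_h\bphi$ and with $\be$, combining with equations \eqref{eq1h}--\eqref{eq2h} tested against suitable discrete functions, and using the commuting diagram \eqref{eq:commutative} together with $\bdiv(\mathbb H_{0,h})\subseteq\mathbf Q_h$ so that $b(\bPi_h\bphi,\be)=b(\bPi_h\bphi,\mathcal P_h\bu-\bu_h)$ and $b(\btau_h,\mathcal P_h\bu)=b(\btau_h,\bu)$ for $\btau_h\in\mathbb H_{0,h}$.

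The computation should reduce $\|\be\|_{0,\O}^2$ to a sum of terms, each of which carries an interpolation error factor $h^s$ (from \eqref{daniel1}--\eqref{daniel4} and Lemma \ref{postprocessing}(1) applied to $\bphi$, $\bz$) times one of $\|\bsig-\bsig_h\|_{0,\O}$, $\|p-p_h\|_{0,\O}$, $\|\bu-\bu_h\|_{0,\O}$, or a term involving $|\lambda-\lambda_h|$ and $\|\bu-\bu_h\|_{0,\O}$ coming from the eigenvalue equations on the right-hand sides. Concretely, after the manipulations one expects an estimate of the form
\begin{equation*}
\|\be\|_{0,\O}^2\lesssim \bigl(\|\bsig-\bsig_h\|_{0,\O}+\|p-p_h\|_{0,\O}+\|\bu-\bu_h\|_{0,\O}\bigr)\,\|\bz-\bPi_h\text{-type terms}\|\;+\;|\lambda-\lambda_h|\,\|\be\|_{0,\O},
\end{equation*}
and since the dual regularity gives $\|\bz-\Theta_h\bz\|_{0,\O}\lesssim h^{1+s}\|\be\|_{0,\O}$ and $\|\bphi-\bPi_h\bphi\|_{0,\O}\lesssim h^{s}\|\be\|_{0,\O}$, dividing through by $\|\be\|_{0,\O}$ yields the claimed bound; the term $|\lambda-\lambda_h|\,\|\be\|$ is harmless because Lemma \ref{lema:apriorie} shows $|\lambda-\lambda_h|\lesssim h^{2s}$, which is even smaller than $h^s$.

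The main obstacle will be bookkeeping the several cross terms correctly and, in particular, making sure the right-hand side data of the discrete problem \eqref{eq2h} (which involves $\lambda_h\bu_h$ rather than $\lambda\bu$) is handled so that the difference $\lambda\bu-\lambda_h\bu_h=\lambda(\bu-\bu_h)+(\lambda-\lambda_h)\bu_h$ is split into a term absorbing into the $h^s\|\bu-\bu_h\|_{0,\O}$ contribution and a genuinely higher-order term. A secondary delicate point is that $\bPi_h\bphi$ need not lie in $\mathbb H_{0,h}$ (the zero-trace constraint), so one must either correct it by a multiple of $\mathbb I$—which does not affect $b(\cdot,\be)$ since $\bdiv\mathbb I=\0$—or work with the full space $\mathbb H_h$ and exploit that the trace-correction terms vanish against $\be$. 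I would organize the proof so that the correction is done silently at the start and the rest is a direct, if lengthy, estimation using the stated approximation properties.
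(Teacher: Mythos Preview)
The paper states this lemma without proof, so there is no ``paper's own proof'' to compare against directly; the result is imported from the superconvergence literature (the same circle of ideas as \cite{MR3047040}). Your duality sketch is in the right spirit, but it contains a real gap.

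When you carry out the computation you describe, after using the commuting diagram and the primal equations you arrive at
\[
\|\be\|_{0,\O}^2 \;=\; a\bigl((\bsig-\bsig_h,p-p_h),(\bPi_h\bphi-\bphi,\,q_h-\psi)\bigr)\;-\;b(\bsig-\bsig_h,\bz),
\]
and the last term equals $(\lambda\bu-\lambda_h\bu_h,\bz)$. Splitting this produces, besides the harmless pieces you list, a contribution $\lambda(\bu-\bu_h,\mathcal{P}_h\bz)=\lambda(\be,\mathcal{P}_h\bz)$. Since $\bz=T\be$ (with $T$ the continuous solution operator) and $\lambda T$ has $1$ in its spectrum, this term is of order $\|\be\|_{0,\O}^2$ with \emph{no} $h^s$ factor; it cannot be absorbed into the left-hand side, and your anticipated residual ``$|\lambda-\lambda_h|\,\|\be\|$'' is too optimistic.

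The standard fix---and the reason Proposition~\ref{separa_eig} is stated just before this section---is to pass through the mixed finite element solution $((\widehat\bsig_h,\widehat p_h),\widehat\bu_h)$ of the \emph{source} problem with datum $\lambda\bu$. For that auxiliary problem the obstruction disappears: one has $\bdiv(\bsig-\widehat\bsig_h)=-\lambda(\bu-\mathcal P_h\bu)\perp\mathbf Q_h$, so the duality argument you outline gives cleanly $\|\mathcal P_h\bu-\widehat\bu_h\|_{0,\O}\lesssim h^{s}(\|\bsig-\widehat\bsig_h\|_{0,\O}+\|p-\widehat p_h\|_{0,\O}+\|\bu-\widehat\bu_h\|_{0,\O})$. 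The remaining piece $\|\widehat\bu_h-\bu_h\|_{0,\O}$ is then controlled by expanding in the discrete eigenbasis $\{\bu_{h,i}\}$: the component along $\bu_h$ is $(\bu,\bu_h)\,\lambda/\lambda_h-1$, which is $\lesssim|\lambda-\lambda_h|+\|\bu-\bu_h\|_{0,\O}^2$ thanks to the normalizations, while the components with $i\neq J$ are bounded using the separation $|\lambda-\lambda_{h,i}|\geq\tfrac12\min_{j\neq J}|\lambda_j-\lambda_J|$ from Proposition~\ref{separa_eig}. Combining the two steps and the discrete stability relating $(\widehat\bsig_h,\widehat p_h,\widehat\bu_h)$ to $(\bsig_h,p_h,\bu_h)$ yields the stated estimate. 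Your plan becomes correct once you insert this two-step structure; as written, the single duality pass does not close.
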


With Lemmas \ref{lmm:rodolfo11} and \ref{postprocessing} at hand,  have the following superconvergence result for $\Theta_h$ (see \cite[Theorem 3.3]{ MR3047040} for the proof).

\begin{lemma}[superconvergence]
\label{lmm:super}
For $h$ small enough, there holds
\begin{equation*}
\|\Theta_h\bu_h-\bu\|_{0,\O}\lesssim h^{s}\left(\|\boldsymbol{\sigma}-\boldsymbol{\sigma}_h\|_{0,\O}+\|p-p_h\|_{0,\O}+\|\bu-\bu_h\|_{0,\O}\right)+\|\Theta_h\bu-\bu\|_{0,\O},
\end{equation*}
where the hidden constant is independent of $h$.
\end{lemma}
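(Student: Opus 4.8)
The plan is to obtain the bound by inserting $\mathcal{P}_h\bu$ as an intermediate quantity and exploiting the stability and idempotency of $\Theta_h$ from Lemma~\ref{postprocessing}. First I would write
\[
\Theta_h\bu_h-\bu=\bigl(\Theta_h\bu_h-\Theta_h\mathcal{P}_h\bu\bigr)+\bigl(\Theta_h\mathcal{P}_h\bu-\bu\bigr),
\]
and then use property (2) of Lemma~\ref{postprocessing}, namely $\Theta_h\mathcal{P}_h\bu=\Theta_h\bu$, to rewrite the second term as $\Theta_h\bu-\bu$. For the first term, I would apply the linearity of $\Theta_h$ together with the stability estimate (3) of Lemma~\ref{postprocessing} to get $\|\Theta_h\bu_h-\Theta_h\mathcal{P}_h\bu\|_{0,\O}=\|\Theta_h(\bu_h-\mathcal{P}_h\bu)\|_{0,\O}\lesssim\|\bu_h-\mathcal{P}_h\bu\|_{0,\O}$.

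Next I would invoke Lemma~\ref{lmm:rodolfo11} to control $\|\mathcal{P}_h\bu-\bu_h\|_{0,\O}$ by $h^{s}\bigl(\|\bsig-\bsig_h\|_{0,\O}+\|p-p_h\|_{0,\O}+\|\bu-\bu_h\|_{0,\O}\bigr)$. Combining the two estimates via the triangle inequality yields
\[
\|\Theta_h\bu_h-\bu\|_{0,\O}\le\|\Theta_h(\bu_h-\mathcal{P}_h\bu)\|_{0,\O}+\|\Theta_h\bu-\bu\|_{0,\O}\lesssim h^{s}\bigl(\|\bsig-\bsig_h\|_{0,\O}+\|p-p_h\|_{0,\O}+\|\bu-\bu_h\|_{0,\O}\bigr)+\|\Theta_h\bu-\bu\|_{0,\O},
\]
which is precisely the claimed inequality, with all hidden constants independent of $h$ because those in Lemmas~\ref{postprocessing} and~\ref{lmm:rodolfo11} are.

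The proof is essentially a short assembly of already-established ingredients, so there is no deep obstacle; the only point requiring a little care is the bookkeeping around the normalization $\|\bu\|_{0,\O}=\|\bu_h\|_{0,\O}=1$, which is needed so that Lemma~\ref{lmm:rodolfo11} applies as stated, and the implicit assumption that $h$ is small enough for that lemma (and hence the a~priori framework behind it) to be in force. One should also note that the term $\|\Theta_h\bu-\bu\|_{0,\O}$ is deliberately left unestimated here; it is the genuinely higher-order part that will later be absorbed using property~(1) of Lemma~\ref{postprocessing}, giving an $h^{1+s}$-type contribution, but that refinement is not part of the present statement.
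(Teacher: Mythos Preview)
Your argument is correct and is precisely the standard proof: the paper does not spell it out but refers to \cite[Theorem~3.3]{MR3047040}, whose proof is exactly the triangle-inequality decomposition through $\Theta_h\mathcal{P}_h\bu$ followed by properties~(2)--(3) of Lemma~\ref{postprocessing} and Lemma~\ref{lmm:rodolfo11}. Your remarks on the normalization hypothesis and the role of ``$h$ small enough'' are also apt.
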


The following auxiliary results, available in \cite{MR3453481}, are necessary in our forthcoming analysis.
\subsection{Technical tools} To perform the analysis, we recall two key  properties that are needed. 
	
First, let us consider the operator  $I_{h}:\H^{1}(\O)\rightarrow \mathscr{C}_I$, where $
	\mathscr{C}_I:=\{v\in C(\bar{\Omega}): v|_{T}\in \mathrm{P}_{1}(T) \;\ \forall T\in\CT_{h}\}
	$ is the Cl\'ement interpolant of degree $k=1$ (see \cite[Chapter 2.]{koelink2006partial}). Similarly, we define $\boldsymbol{I}_h:[\H^1(\Omega)]^n\rightarrow [\mathscr{C}_I]^n=\mathrm{Y}_h$ as the vectorial version of $I_h$.

We now establish the following lemma, which states the local approximation properties of $I_{h}$.
\begin{lemma}
\label{I:clemont}
For all $v\in \H^{1}(\O)$ there holds
\begin{equation*}
\|v-I_{h}v\|_{0,T}\lesssim  h_{T}\|v\|_{1,\omega_{T}}\quad \forall T\in\CT_{h},
\end{equation*}
and
\begin{equation*}
\|v-I_{h}v\|_{0,e}\lesssim h_{e}^{1/2}\|v\|_{1,\omega_{e}}\quad \forall e\in \mathcal{E}_h,
\end{equation*}
where $\omega_{T}:=\{T'\in\CT_{h}: T' \text { and } T \text{ share a facet}\}$,  $\omega_{e}:=\{T'\in\CT_{h}: e\in \mathcal{E}_{T'}\}$, and the hidden constants are independent of $h$.
\end{lemma}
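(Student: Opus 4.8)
The plan is to use the classical scaling argument for quasi-interpolation operators. First I would recall that $I_h$ is built from local averages: to each vertex $z$ of $\CT_h$ one assigns a polynomial $\pi_z v\in\mathrm{P}_1(\omega_z)$ (for instance the $\L^2(\omega_z)$-orthogonal projection of $v$) and sets $I_h v:=\sum_z(\pi_z v)(z)\,\varphi_z$, with $\varphi_z$ the nodal hat functions. Two consequences of this definition, combined with the shape-regularity of $\CT_h$, are then recorded: (i) $I_h$ reproduces constants, so $v\mapsto v-I_h v$ annihilates constants on every patch; and (ii) $I_h$ is locally $\L^2$- and $\H^1$-stable, $\|I_h v\|_{0,T}\lesssim\|v\|_{0,\omega_T}$ and $\|\nabla I_h v\|_{0,T}\lesssim\|v\|_{1,\omega_T}$, with constants depending only on the shape-regularity parameter, since each patch $\omega_T$ contains a uniformly bounded number of elements of comparable diameter.

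For the first estimate I would fix $T\in\CT_h$ and rescale the patch $\omega_T$ to unit size by an affine map onto a reference configuration whose shape, by shape-regularity, ranges over a compact family. On this fixed-size configuration the map $v\mapsto(v-I_h v)|_{\widehat T}$ is bounded from $\H^1$ of the rescaled patch into $\L^2(\widehat T)$ (by the stability (ii)) and annihilates constants (by (i)), so the Deny--Lions/Bramble--Hilbert lemma gives $\|\widehat v-\widehat{I_h v}\|_{0,\widehat T}\lesssim|\widehat v|_{1,\widehat\omega_T}$. Transporting this back through the affine map --- a factor $h_T^{n/2}$ for the $\L^2$ norm and $h_T^{n/2-1}$ for the $\H^1$ seminorm --- yields $\|v-I_h v\|_{0,T}\lesssim h_T\,|v|_{1,\omega_T}\le h_T\,\|v\|_{1,\omega_T}$. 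The same transport applied to (ii) also gives the $h$-independent bound $\|\nabla(v-I_h v)\|_{0,T}\lesssim\|v\|_{1,\omega_T}$, which is needed for the next step.

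For the face/edge estimate I would invoke the scaled multiplicative trace inequality $\|w\|_{0,e}^2\lesssim h_T^{-1}\|w\|_{0,T}^2+h_T\|\nabla w\|_{0,T}^2$, valid for $e\in\mathcal{E}(T)$ and $w\in\H^1(T)$ with a shape-regularity--dependent constant (again obtained by the same reference-element trick). Applying this with $w=v-I_h v$, summing over the boundedly many $T\subset\omega_e$, and inserting the two elementwise bounds just proved gives $\|v-I_h v\|_{0,e}^2\lesssim h_e^{-1}(h_e\|v\|_{1,\omega_e})^2+h_e\|v\|_{1,\omega_e}^2\lesssim h_e\,\|v\|_{1,\omega_e}^2$, that is, the claimed $h_e^{1/2}$ bound. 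The vectorial version for $\boldsymbol I_h$ follows componentwise.

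The only point requiring genuine care is the uniformity of all the hidden constants across the whole mesh family: the reference-patch argument must apply with a single constant, which is precisely what shape-regularity of $\{\CT_h\}$ delivers (bounded patch cardinality and comparable element diameters within each patch). Everything else is routine bookkeeping, so in the paper I would only sketch the scaling step and refer to \cite{MR3453481,koelink2006partial} for the remaining details.
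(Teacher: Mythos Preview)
Your sketch is the standard Cl\'ement argument and is correct. Note, however, that the paper does not actually prove this lemma: it is stated as a known technical tool, with the Cl\'ement interpolant introduced via a reference to \cite{koelink2006partial}, and no proof is supplied. So there is no ``paper's own proof'' to compare against; your outline is precisely the classical argument one finds in the cited references.
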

Secondly, the  following Helmoltz decomposition holds (see \cite[Lemma 4.3]{MR3453481}).
\begin{lemma}
\label{lm:helm}
For each $\btau\in \mathbb{H}(\bdiv,\O)$ there exist $\boldsymbol{z}\in [\H^{2}(\O)]^{n}$ and $\boldsymbol{\chi}\in \mathbb{H}^1(\Omega)$ such that
\begin{equation*}
\btau=\nabla \boldsymbol{z}+\mathbf{curl}{\boldsymbol{\chi}}\quad \text{ in }\O\quad \text{ and }\quad \|\boldsymbol{z}\|_{2,\O}+\|\boldsymbol{\chi}\|_{1,\O}\lesssim \|\btau\|_{\bdiv,\O},
\end{equation*}
where the hidden constant independent of all the foregoing variables.
\end{lemma}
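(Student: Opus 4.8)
\emph{Proof proposal.} The plan is to split $\btau$ into a gradient carrying its divergence plus a divergence-free remainder that is recovered as a $\mathbf{curl}$. First I would absorb the divergence. Since $\btau\in\mathbb{H}(\bdiv,\O)$ we have $\bdiv\btau\in[\LO]^n$, so the vector Dirichlet--Laplace problem
\begin{equation*}
\boldsymbol{z}\in[\H^1_0(\O)]^n,\qquad \int_\O\nabla\boldsymbol{z}:\nabla\boldsymbol{v}=-\int_\O\bdiv\btau\cdot\boldsymbol{v}\quad\forall\,\boldsymbol{v}\in[\H^1_0(\O)]^n,
\end{equation*}
has a unique solution by Lax--Milgram, and the Poincar\'e inequality gives $\|\boldsymbol{z}\|_{1,\O}\lesssim\|\bdiv\btau\|_{0,\O}$. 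Then I would invoke the $\H^2$ elliptic regularity of the Laplacian on the (convex) polygonal/polyhedral domain $\O$ to upgrade this to $\boldsymbol{z}\in[\H^2(\O)]^n$ with $\|\boldsymbol{z}\|_{2,\O}\lesssim\|\bdiv\btau\|_{0,\O}\lesssim\|\btau\|_{\bdiv,\O}$. By construction $\Delta\boldsymbol{z}=\bdiv\btau$ in $\O$, hence $\bdiv(\btau-\nabla\boldsymbol{z})=\boldsymbol{0}$, that is, every row of the tensor $\btau-\nabla\boldsymbol{z}\in\mathbb{L}^2(\O)$ is divergence-free.

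Second, I would recover the potential. Because $\O$ is simply connected, a classical result (e.g.\ Girault--Raviart) ensures that each divergence-free row of $\btau-\nabla\boldsymbol{z}$ is the $\mathbf{curl}$ of an $\H^1$ field, with the associated stability estimate; collecting the $n$ rows produces $\boldsymbol{\chi}\in\mathbb{H}^1(\O)$ such that
\begin{equation*}
\btau-\nabla\boldsymbol{z}=\mathbf{curl}\,\boldsymbol{\chi}\quad\text{in }\O,\qquad \|\boldsymbol{\chi}\|_{1,\O}\lesssim\|\btau-\nabla\boldsymbol{z}\|_{0,\O}\lesssim\|\btau\|_{0,\O}+\|\boldsymbol{z}\|_{1,\O}\lesssim\|\btau\|_{\bdiv,\O}.
\end{equation*}
Combining the two steps yields $\btau=\nabla\boldsymbol{z}+\mathbf{curl}\,\boldsymbol{\chi}$ together with $\|\boldsymbol{z}\|_{2,\O}+\|\boldsymbol{\chi}\|_{1,\O}\lesssim\|\btau\|_{\bdiv,\O}$. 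In two dimensions the argument is identical, with $\mathbf{curl}$ understood as the rotated gradient $\brot$ acting row-wise on a vector field.

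The main obstacle is the $\H^2$-regularity estimate in the first step: it is precisely here that structural hypotheses on $\O$ (convexity, or smoothness of the boundary) are needed, which is why the decomposition is stated for such domains. The potential-recovery step, by contrast, is purely topological and uses only simple connectivity, so it presents no difficulty. Since the statement is quoted from \cite[Lemma 4.3]{MR3453481}, one may alternatively simply cite that reference.
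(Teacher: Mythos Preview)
The paper does not prove this lemma at all; it simply quotes \cite[Lemma~4.3]{MR3453481}. Your sketch captures the correct two-step strategy (absorb the divergence into a gradient, then write the remainder as a curl), but there is a genuine gap in the first step. You solve the Dirichlet Laplacian on $\Omega$ itself and then appeal to $\H^2$ elliptic regularity, explicitly noting that this ``is precisely here that structural hypotheses on $\Omega$ (convexity, or smoothness of the boundary) are needed.'' But the lemma is \emph{not} stated only for convex $\Omega$, and in this very paper it is applied on L-shaped and T-shaped domains with re-entrant corners, where the Dirichlet Laplacian fails to be $\H^2$-regular. So your argument, as written, does not deliver $\boldsymbol{z}\in[\H^2(\Omega)]^n$ in the cases that matter.

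The fix used in the cited reference is to embed $\overline{\Omega}$ in a larger convex set $G$ (a ball suffices), extend $\bdiv\btau\in[\L^2(\Omega)]^n$ by zero to $G$, and solve $-\Delta\boldsymbol{z}$ equal to this extension in $[\H^1_0(G)]^n$. Convexity of $G$ then gives $\boldsymbol{z}\in[\H^2(G)]^n$ with $\|\boldsymbol{z}\|_{2,G}\lesssim\|\bdiv\btau\|_{0,\Omega}$, and restricting to $\Omega$ yields $\boldsymbol{z}\in[\H^2(\Omega)]^n$ with the required bound, independently of the geometry of $\Omega$. One still has $\bdiv(\btau-\nabla\boldsymbol{z})=\boldsymbol{0}$ in $\Omega$, and your second step (recovering $\boldsymbol{\chi}\in\mathbb{H}^1(\Omega)$ row-wise via the simply-connected curl potential) goes through unchanged.
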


\subsection{The local and global error indicators} 
Now we present the local indicators for our problem. We need to remark that the reduced and full problems are not equivalent, and hence, each formulation have a particular local indicator. Let us present the indicator for the 
reduced discrete eigenvalue problem
\begin{multline}
	\label{eq:local_reduced}
	\theta_T^2:=\|\Theta_h\bu_h-\bu_h\|_{0,T}^2+h_T^2\left\|\curl\left\{\frac{1}{2\mu}\bsig_h^d\right\}\right\|_{0,T}^2+h_T^2\left\|\nabla\bu_h-\frac{1}{2\mu}\bsig_h^d\right\|_{0,T}^2\\
	\small+\sum_{e\in\mathcal{E}(T)\cap\mathcal{E}_h(\O)}h_e\left\|\jump{\frac{1}{2\mu}\bsig_h^d\times\bn_e}\right\|_{0,e}^2
	+
	\sum_{e\in\mathcal{E}(T)\cap\mathcal{E}_h(\partial\O)}h_e\left\|\frac{1}{2\mu}\bsig_h^d\times\bn_e\right\|_{0,e}^2,
\end{multline}
and the global estimator is defined by
\begin{equation}
\label{eq:global_est}
\theta:=\left\{ \sum_{T\in\mathcal{T}_h}\theta_{T}^2\right\}^{1/2}.
\end{equation}
Now, the local indicator for the complete formulation incorporates the contributions of the pressure, together with \eqref{eq:local_reduced} as follows
\begin{multline}
\label{eq:local_full}
\eta_T^2:=\theta_T^2+\left\|p_h+\frac{1}{n}\tr(\bsig_h)\right\|_{0,T}^2+h_T^2\left\|\curl\left[\left( p_h+\frac{1}{n}\tr(\bsig_h)\right)\mathbb{I}\right]\right\|_{0,T}^2\\
+\sum_{e\in\mathcal{E}(T)\cap\mathcal{E}_h(\O)}h_e\left\|\jump{\left[\left( p_h+\frac{1}{n}\tr(\bsig_h)\right)\mathbb{I}\right]\times\bn_e}\right\|_{0,e}^2\\
+\sum_{e\in\mathcal{E}(T)\cap\mathcal{E}_h(\partial\O)}h_e\left\|\left[\left( p_h+\frac{1}{n}\tr(\bsig_h)\right)\mathbb{I}\right]\times\bn_e\right\|_{0,e}^2,
\end{multline}
and hence, the global estimator for the complete problem is 
\begin{equation}
\label{eq:global_full}
\eta:=\left\{ \sum_{T\in\mathcal{T}_h}\eta_{T}^2\right\}^{1/2}.
\end{equation}

It is important to notice that for  $n=2$  the tangential traces are taken as $\bsig_h^{\dd}\mathbf{t}_e/(2\mu)$ and $\left( p_h+\tr(\bsig_h)/n\right)\mathbf{t}_e$ for the estimators \eqref{eq:local_reduced} and \eqref{eq:local_full}, respectively, where $\mathbf{t}$ is the corresponding   unit tangential vector along the edge $e$.

Now our task is to analyze the reliability and efficiency of \eqref{eq:global_est} and \eqref{eq:global_full}. Let us claim that our attention will be focused on the complete estimator \eqref{eq:global_full}, since for \eqref{eq:global_est} the computations are straightforward from the complete case.
\subsection{Reliability}
The goal of this section is to derive an upper bound for  \eqref{eq:global_full}. Let us begin with the following result.

\begin{lemma}
\label{lema_cota_s}
Let $(\lambda,((\bsig, p),\bu))\in\R\times\mathbb{H}\times \mathbf{Q}$ be the solution of \eqref{eq1}--\eqref{eq2} and let $(\lambda_{h} ,((\boldsymbol{\sigma}_h,p_h), \bu_h))\in\R\times\mathbb{H}_{h}\times\mathbf{Q}_h$ be its finite element approximation, given as the  solution of  \eqref{eq1h}--\eqref{eq2h}.  Then for all $\btau\in \mathbb{H}_{0}$, we have
\begin{multline}
\label{eq:error_bound1}
\|\boldsymbol{\sigma}-\boldsymbol{\sigma}_{h}\|_{\bdiv,\O}+\|p-p_h\|_{0,\O}+\|\bu-\bu_{h}\|_{0,\O}\\
\lesssim \displaystyle\sup_{\underset{(\btau,q)\neq(\boldsymbol{0},0)}{(\btau,q)\in\mathbb{H}}}\frac{-a((\boldsymbol{\sigma}_{h},p_h),(\btau,q))-b(\btau,\bu_h)}{\|(\btau,q)\|_{\bdiv,\O}}\\
+\underbrace{|\lambda_{h}-\lambda |+\|\bu-\Theta_h\bu_{h}\|_{0,\O}}_{\text{h.o.t}}+\|\Theta_h\bu_{h}-\bu_{h}\|_{0,\O}.
\end{multline}
\end{lemma}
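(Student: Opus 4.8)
The natural route is to start from the inf-sup condition \eqref{eq:complete_infsup} for the continuous operator $\mathcal{A}$, applied to the error pair $((\bsig-\bsig_h, p-p_h), \bu-\bu_h)$. This gives
\[
\|((\bsig-\bsig_h, p-p_h),\bu-\bu_h)\|_{\mathbb{H}\times\mathbf{Q}}\lesssim \sup_{((\bxi,r),\bw)\neq\boldsymbol{0}}\frac{a((\bsig-\bsig_h,p-p_h),(\bxi,r))+b(\bxi,\bu-\bu_h)+b(\bsig-\bsig_h,\bw)}{\|((\bxi,r),\bw)\|_{\mathbb{H}\times\mathbf{Q}}}.
\]
Using the continuous equation \eqref{eq1}, the first two terms in the numerator collapse to $-a((\bsig_h,p_h),(\bxi,r))-b(\bxi,\bu_h)$, which is exactly the residual functional appearing on the right-hand side of \eqref{eq:error_bound1}. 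So the only thing left to control is the third term $b(\bsig-\bsig_h,\bw)$, which by \eqref{eq2} equals $-\lambda(\bu,\bw)-b(\bsig_h,\bw)$; and by the discrete equation \eqref{eq2h} we can subtract the Galerkin quantity to obtain something like $-\lambda(\bu,\bw)+\lambda_h(\bu_h,\mathcal{P}_h\bw)+b(\bsig_h,\mathcal{P}_h\bw-\bw)$. The term $b(\bsig_h,\mathcal{P}_h\bw-\bw)$ vanishes because $\bdiv\bsig_h$ is piecewise constant (it lies in $\mathbf{Q}_h$) and $\mathcal{P}_h$ is the $\L^2$-projection onto $\mathbf{Q}_h$, so $b(\bsig_h,\mathcal{P}_h\bw-\bw)=\int_\O\bdiv\bsig_h\cdot(\mathcal{P}_h\bw-\bw)=0$.

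The remaining piece is to massage $-\lambda(\bu,\bw)+\lambda_h(\bu_h,\mathcal{P}_h\bw)$ into the high-order and postprocessing terms. I would write
\[
-\lambda(\bu,\bw)+\lambda_h(\bu_h,\mathcal{P}_h\bw) = (\lambda_h-\lambda)(\bu,\bw) + \lambda_h(\bu_h-\bu,\mathcal{P}_h\bw) + \lambda_h(\bu,\mathcal{P}_h\bw-\bw),
\]
and then reinterpret $\bu_h-\bu$ via the postprocessing, splitting $\bu_h-\bu=(\bu_h-\Theta_h\bu_h)+(\Theta_h\bu_h-\bu)$. Bounding each term using $\|\bw\|_{0,\O}\le\|((\bxi,r),\bw)\|_{\mathbb{H}\times\mathbf{Q}}$, $\|\mathcal{P}_h\bw\|_{0,\O}\le\|\bw\|_{0,\O}$, $\|\bu\|_{0,\O}=1$, and the fact that $\lambda_h$ is uniformly bounded (from Lemma \ref{lema:apriorie}), we get contributions $|\lambda_h-\lambda|$, $\|\Theta_h\bu_h-\bu\|_{0,\O}$, and $\|\Theta_h\bu_h-\bu_h\|_{0,\O}$. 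The term $\lambda_h(\bu,\mathcal{P}_h\bw-\bw)$ needs a little care: since $\mathcal{P}_h$ is self-adjoint, $(\bu,\mathcal{P}_h\bw-\bw)=(\mathcal{P}_h\bu-\bu,\bw)$, which we can fold into the $\bu-\bu_h$ error or keep as part of the quantity controlled by $\|\mathcal{P}_h\bu-\bu_h\|_{0,\O}$ together with $\|\bu_h-\bu\|_{0,\O}$ — in any case it is absorbed by the three target terms up to constants, using Lemma \ref{lmm:rodolfo11} if needed. Finally, the left-hand side of \eqref{eq:error_bound1} follows since $\|\btau\|_{\bdiv,\O}^2=\|\btau\|_{0,\O}^2+\|\bdiv\btau\|_{0,\O}^2$ is precisely the $\mathbb{H}_0$ norm, and the $\L^2$ norms of the pressure and velocity errors are dominated by the full $\mathbb{H}\times\mathbf{Q}$ norm.

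The main obstacle, and the reason the postprocessing operator $\Theta_h$ is introduced, is the handling of the term coming from $\lambda_h(\bu_h-\bu,\mathcal{P}_h\bw)$: a naive bound would produce $\|\bu-\bu_h\|_{0,\O}$ on the right, which is fine, but the subtle point is that in the subsequent reliability estimate one does not want the full error reappearing on the right at the same order — one wants it to be either a genuinely higher-order term or the computable quantity $\|\Theta_h\bu_h-\bu_h\|_{0,\O}$. So the real work is the algebraic bookkeeping that isolates $\|\Theta_h\bu_h-\bu\|_{0,\O}$ (which Lemma \ref{lmm:super} shows is $h^s$ times the error plus $\|\Theta_h\bu-\bu\|_{0,\O}$, hence higher order) as the only non-computable leftover, together with the explicitly computable $\|\Theta_h\bu_h-\bu_h\|_{0,\O}$. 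I would carry out this splitting carefully and verify at the end that every term on the right of \eqref{eq:error_bound1} is accounted for, with all hidden constants independent of $h$.
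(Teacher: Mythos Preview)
Your overall strategy---apply the global inf--sup condition \eqref{eq:complete_infsup} to the error, use \eqref{eq1} to reduce the first two terms to the residual functional, and then control $b(\bsig-\bsig_h,\bw)$ via the eigenvalue equations and the splitting $\bu_h-\bu=(\bu_h-\Theta_h\bu_h)+(\Theta_h\bu_h-\bu)$---is exactly what the paper does.

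Where you diverge is in the treatment of $b(\bsig_h,\bw)$. You route through the discrete equation \eqref{eq2h} and the projector $\mathcal{P}_h$, producing the extra term $\lambda_h(\bu,\mathcal{P}_h\bw-\bw)=\lambda_h(\mathcal{P}_h\bu-\bu,\bw)$, whose handling you leave vague (``fold into the $\bu-\bu_h$ error \dots\ using Lemma \ref{lmm:rodolfo11} if needed''). That suggestion is circular: $\|\bu-\bu_h\|_{0,\O}$ sits on the left-hand side, and $\|\mathcal{P}_h\bu-\bu\|_{0,\O}$ is not among the three target terms on the right. The paper avoids this entirely by observing that, since $\bdiv\bsig_h\in\mathbf{Q}_h$ and $\bu_h\in\mathbf{Q}_h$, equation \eqref{eq2h} gives the pointwise identity $\bdiv\bsig_h=-\lambda_h\bu_h$. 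Hence $b(\bsig-\bsig_h,\bw)=(-\lambda\bu+\lambda_h\bu_h,\bw)$ holds for \emph{every} $\bw\in\mathbf{Q}$, with no projector needed, and the split
\[
-\lambda\bu+\lambda_h\bu_h=(\lambda_h-\lambda)\bu_h-\lambda(\bu-\Theta_h\bu_h)-\lambda(\Theta_h\bu_h-\bu_h)
\]
together with Cauchy--Schwarz and $\|\bu_h\|_{0,\O}=1$ yields the three target terms directly. Equivalently, in your own notation, since $\bu_h\in\mathbf{Q}_h$ one has $(\bu_h,\mathcal{P}_h\bw)=(\bu_h,\bw)$, so your second and third pieces collapse to $\lambda_h(\bu_h-\bu,\bw)$ and the problematic term never appears; you simply did not carry the simplification far enough.
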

\begin{proof}
Applying the inf-sup condition \eqref{eq:complete_infsup} on the errors $\boldsymbol{\sigma}-\boldsymbol{\sigma}_h$, $p-p_h$ and $\bu-\bu_h$, together with \eqref{eq1},  we have
that 
\begin{multline*}
\|((\boldsymbol{\sigma}-\boldsymbol{\sigma}_{h},p-p_h),\bu-\bu_{h})\|_{\mathbb{H}\times \mathbf{Q}}\\
\lesssim \displaystyle\sup_{\underset{((\btau,q),\bv)\neq \boldsymbol{0}}{((\btau,q),\bv)\in \mathbb{H}\times\mathbf{Q}}}\frac{a((\boldsymbol{\sigma}-\boldsymbol{\sigma}_{h},p-p_h),(\btau,q))+b(\btau,\bu-\bu_h)+b(\boldsymbol{\sigma}-\boldsymbol{\sigma}_h,\bv)}{\|((\btau,q),\bv)\|_{\mathbb{H}\times \mathbf{Q}}}\\
\lesssim \displaystyle\sup_{\underset{(\btau,q)\neq(\boldsymbol{0},0)}{(\btau,q)\in\mathbb{H}}}\frac{-a((\boldsymbol{\sigma}_{h},p_h),(\btau,q))-b(\btau,\bu_h)}{\|\btau\|_{\bdiv,\O}+\|q\|_{0,\O}}+\displaystyle\sup_{\underset{\bv\neq\boldsymbol{0}}{\bv\in\mathbf{Q}}}\frac{b(\boldsymbol{\sigma}-\boldsymbol{\sigma}_h,\bv)}{\|\bv\|_{0,\O}},
\end{multline*}
Now, according to the definition of the bilinear form $b(\cdot,\cdot)$,  \eqref{eq2}, the relation $\bdiv\boldsymbol{\sigma}_{h}=-\lambda_h\bu_h$,  and the Cauchy–Schwarz inequality, we obtain
\begin{align*}
\displaystyle\sup_{\underset{\bv\neq\boldsymbol{0}}{\bv\in\mathbf{Q}}}\frac{b(\boldsymbol{\sigma}-\boldsymbol{\sigma}_h,\bv)}{\|\bv\|_{0,\O}}\leq |\lambda_h-\lambda|\|\bu_h\|_{0,\O}+|\lambda|\left(\|\bu-\Theta_h\bu_h\|_{0,\O}+\|\Theta_h\bu_h-\bu_h\|_{0,\O}\right).
\end{align*}
The proof is concluded by similar arguments of those presented on the proof of \cite[Lemma 4.5]{https://doi.org/10.48550/arxiv.2201.03658}.
\end{proof}

	We note that thanks to Lemmas \ref{lema:apriorie}, \ref{postprocessing} and \ref{lmm:super}, if $(\lambda, (\boldsymbol{\sigma},p),\bu)\in\mathbb{R}\times\mathbb{H}_0^s(\Omega)\times\H^{s}(\Omega)\times[\H^{1+s}(\Omega)]^n$ is the solution of \eqref{eq1}--\eqref{eq2} with $\|\bu\|_{0,\O}=1$ and \break $(\lambda_h, (\boldsymbol{\sigma}_h, p_h),\bu_h)\in\mathbb{R}\times\mathbb{H}_{h}\times\mathbf{Q}$ is the solution of \eqref{eq1h}--\eqref{eq2h}with $\|\bu_h\|_{0,\O}=1$, then, the high order term satisfy
		\begin{multline}
		\label{eq:hot}
			\mathrm{h.o.t}\lesssim h^s \left(\|\boldsymbol{\sigma}-\boldsymbol{\sigma}_{h}\|_{0,\O}+\|p-p_h\|_{0,\O}+\|\bu-\bu_{h}\|_{0,\O}\right)\\
			+\|\bu-\Theta_h\bu\|_{0,\O}
			\lesssim h^{2s},
		\end{multline}		
		where the hidden constant is independent of the mesh size and $s>0$ as in Theorem \ref{th:reg_velocity}.
	

To bound the supremum in \eqref{eq:error_bound1}, we proceed analogously as in  \cite{MR2594823,https://doi.org/10.48550/arxiv.2201.03658}. Indeed, considering the decomposition  $\btau=\nabla\boldsymbol{z}+\curl\boldsymbol{\chi}$ given by  Lemma \ref{lm:helm}, where $\btau\in\mathbb{H}_0$, we notice that is possible to define $\btau_h\in\mathbb{H}_{0,h}$ through  a discrete Helmholtz decomposition by
\begin{equation*}
\btau_{h}:=\bPi_h\left(\nabla\boldsymbol{z}\right)+\curl(\boldsymbol{\chi}_{h})-d_{h}\mathbb{I},
\end{equation*}
where 
$\boldsymbol{\chi}_{h}:=(\boldsymbol{\chi}_{1h}, \ldots,\boldsymbol{\chi}_{nh})^{\texttt{t}}$, with $\boldsymbol{\chi}_{ih}:=\boldsymbol{I}_{h}(\boldsymbol{\chi}_{i})$ for $i=\{1,...,n\}$, $\bPi_h$ is the Raviart-Thomas interpolation operator, satisfying \eqref{daniel1}-\eqref{daniel4} and the constant $d_{h}$ is chosen as
\begin{equation*}
d_{h}:=\dfrac{1}{n|\O|}\int_{\O}\tr(\btau_{h})=-\dfrac{1}{n|\O|}\int_{\O}\tr\left(\nabla\boldsymbol{z}-\bPi_h\left(\nabla\boldsymbol{z}\right)+\curl(\boldsymbol{\chi}-\boldsymbol{\chi}_{h})\right).
\end{equation*}
Following the arguments provided by \cite{LRV_vorticity,https://doi.org/10.48550/arxiv.2201.03658}
for $\boldsymbol{\sigma}_{h},\in\mathbb{H}_{0,h}$, setting  $\bxi=\btau-\btau_h$, and with the aid the Helmholtz decomposition, we  obtain the identity
\begin{multline*}
-\left[a((\boldsymbol{\sigma}_{h},p_h),(\btau,q))+b(\btau,\bu_h)\right]=-\left[a((\boldsymbol{\sigma}_{h},p_h),(\bxi,q))+b(\bxi,\bu_h)\right]\\=-a((\boldsymbol{\sigma}_{h},p_h),(\bxi,q)),
\end{multline*}
Hence, we have 
\begin{multline}
\label{eq:sup}
a((\bsig_h,p_h),(\btau,q))+b(\btau,\bu_h)=\underbrace{a((\bsig_h,p_h),(\nabla\boldsymbol{z}-\bPi_h(\nabla\boldsymbol{z}),q))}_{\mathbf{I}}\\
+\underbrace{a((\bsig_h,p_h),(\curl(\boldsymbol{\chi}-\boldsymbol{\chi}_h),q))}_{\mathbf{II}}.
\end{multline}
Now,  each contribution $\mathbf{I}$ and $\mathbf{II}$ is controlled using the arguments provided by \cite[Section 4.1]{MR2594823} and \cite[Lemmas 4.6 and 4.7]{https://doi.org/10.48550/arxiv.2201.03658}. Therefore,  we obtain that 
	\begin{equation}
	\label{eq:termI}
		\left|\mathbf{I}\right|\lesssim\left\{\sum_{T\in\CT_{h}}\eta_{T}^{2} \right\}^{1/2}\|((\btau,q),\bv)\|_{\mathbb{H}\times\mathbf{Q}},
	\end{equation}
and 
	\begin{equation}
	\label{eq:termII}
		\left|\mathbf{II}\right|\lesssim\left\{\sum_{T\in\CT_{h}}\eta_{T}^{2} \right\}^{1/2}\|((\btau,q),\bv)\|_{\mathbb{H}\times\mathbf{Q}}.
	\end{equation}
Since the calculations follow directly from these two references, we skip the details.

As a consequence of Lemma \ref{lema:apriorie}, Lemma \ref{lema_cota_s}, \eqref{eq:hot}, \eqref{eq:sup}, estimates \eqref{eq:termI}--\eqref{eq:termII},  and the definition of the local estimator $\eta_T$, we have the following result.
\begin{prop}
Let $(\lambda,(\bsig,p), \bu)\in\R\times\mathbb{H}\times \mathbf{Q}$ be the solution of \eqref{eq1}--\eqref{eq2}  and let $(\lambda_{h},(\bsig_h,p_h),\bu_h)\in\R\times\mathbb{H}_{h}\times\mathbf{Q}_h$ solution of  \eqref{eq1h}--\eqref{eq2h}. Then,  there exists $h_0$ for all $h < h_0$, there holds.
\begin{align*}
\|\bsig-\bsig_{h}\|_{\bdiv,\O}+\|p-p_h\|_{0,\O}+\|\bu-\bu_{h}\|_{0,\O}& \lesssim \left\{\sum_{T\in\CT_{h}}\eta_{T}^{2} \right\}^{1/2}+\|\bu-\Theta_h\bu\|_{0,\O},\\
|\lambda_{h}-\lambda |&\lesssim \sum_{T\in\CT_{h}}\eta_{T}^{2}+\|\bu-\Theta_h\bu\|_{0,\O}^2,
\end{align*}
where the hidden constant is independent of $h$.
\end{prop}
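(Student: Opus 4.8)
The plan is to assemble the reliability bound directly from the pieces already laid out in the excerpt. The starting point is Lemma~\ref{lema_cota_s}, which bounds the error $\|\bsig-\bsig_h\|_{\bdiv,\O}+\|p-p_h\|_{0,\O}+\|\bu-\bu_h\|_{0,\O}$ by three contributions: the supremum functional appearing in \eqref{eq:error_bound1}, the high order term $\mathrm{h.o.t}=|\lambda_h-\lambda|+\|\bu-\Theta_h\bu_h\|_{0,\O}$, and the computable term $\|\Theta_h\bu_h-\bu_h\|_{0,\O}$. The last term is already one of the summands of $\theta_T^2$, hence bounded by $\{\sum_T\eta_T^2\}^{1/2}$. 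For the supremum, I would invoke the Helmholtz decomposition of Lemma~\ref{lm:helm}, the induced discrete Helmholtz decomposition with the corrector $d_h\mathbb{I}$, and the splitting \eqref{eq:sup} into the terms $\mathbf{I}$ and $\mathbf{II}$; estimates \eqref{eq:termI}--\eqref{eq:termII} then bound each by $\{\sum_T\eta_T^2\}^{1/2}$ times the norm of the test function, so after dividing and taking the supremum one gets $\{\sum_T\eta_T^2\}^{1/2}$. This establishes the first displayed inequality up to the treatment of $\mathrm{h.o.t}$.

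The second step is to absorb $\mathrm{h.o.t}$ using the superconvergence machinery. By \eqref{eq:hot} — which is itself a consequence of Lemmas~\ref{lema:apriorie}, \ref{postprocessing} and \ref{lmm:super} — we have $\mathrm{h.o.t}\lesssim h^s(\|\bsig-\bsig_h\|_{0,\O}+\|p-p_h\|_{0,\O}+\|\bu-\bu_h\|_{0,\O})+\|\bu-\Theta_h\bu\|_{0,\O}$. Plugging this into the bound from Lemma~\ref{lema_cota_s}, the term $h^s(\|\bsig-\bsig_h\|_{0,\O}+\cdots)$ can be moved to the left-hand side and absorbed, since for $h<h_0$ small enough the factor $h^s$ is smaller than the implicit constant; one is left with $\|\bsig-\bsig_h\|_{\bdiv,\O}+\|p-p_h\|_{0,\O}+\|\bu-\bu_h\|_{0,\O}\lesssim \{\sum_T\eta_T^2\}^{1/2}+\|\bu-\Theta_h\bu\|_{0,\O}$. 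The term $\|\bu-\Theta_h\bu\|_{0,\O}$ is genuinely of high order (it behaves like $h^{1+s}$ by part~(1) of Lemma~\ref{postprocessing}) and stays as an additive oscillation-type term, exactly as in the statement.

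For the eigenvalue estimate, the plan is to use the standard quadratic bound from Lemma~\ref{lema:apriorie}: $|\lambda-\lambda_h|\lesssim \|\bsig-\bsig_h\|_{0,\O}^2+\|p-p_h\|_{0,\O}^2+\|\bu-\bu_h\|_{0,\O}^2$. Squaring the just-proven reliability bound for the fields (using $(a+b)^2\lesssim a^2+b^2$) immediately yields $|\lambda-\lambda_h|\lesssim \sum_T\eta_T^2+\|\bu-\Theta_h\bu\|_{0,\O}^2$, which is the second displayed inequality. All hidden constants are mesh-independent since each invoked result carries that property, and the threshold $h_0$ is the one coming from Lemma~\ref{lmm:super} / Proposition~\ref{separa_eig} together with the absorption argument.

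The main obstacle is the absorption step: one has to be careful that the term produced by \eqref{eq:hot}, namely $h^s$ times the $\L^2$-type error, is controlled by the full $\H(\bdiv)$-type error on the left, and that the constant in front is the same mesh-independent constant from Lemma~\ref{lema_cota_s}, so that choosing $h_0$ with $C h_0^s<\tfrac12$ legitimately lets us subtract it. A secondary point requiring care is bookkeeping the passage from the $\mathbb{H}\times\mathbf{Q}$-norm appearing in \eqref{eq:termI}--\eqref{eq:termII} to the $\|(\btau,q)\|_{\bdiv,\O}$-normalization used in the supremum of \eqref{eq:error_bound1}; since $\bv$ does not appear on the left of \eqref{eq:error_bound1} this is just a matter of choosing the test function in the Helmholtz argument with $\bv=\0$, or equivalently noting these norms are comparable on the relevant subspace. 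Beyond these two points the argument is an assembly of already-stated lemmas.
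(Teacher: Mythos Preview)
Your proposal is correct and follows essentially the same route as the paper: the paper's proof is nothing more than the assembly of Lemma~\ref{lema_cota_s}, \eqref{eq:hot}, \eqref{eq:sup}, \eqref{eq:termI}--\eqref{eq:termII}, the definition of $\eta_T$, and Lemma~\ref{lema:apriorie}, exactly as you outline. Your write-up is in fact more explicit than the paper's, which simply lists these ingredients without spelling out the absorption argument or the eigenvalue squaring step.
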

A similar result for the estimator $\theta$ is directly established if the pseudostress-velocity problem is considered. Therefore, the reliability of our estimators is guaranteed.
\subsection{Efficiency}
The  following task is to obtain a lower bound for the local indicator \eqref{eq:local_full}, which is obtained with a localization technique based in bubble functions, together with inverse inequalities. 


We begin by introducing the bubble functions for two and three dimensional elements. Given $T\in\mathcal{T}_h$ and $e\in\mathcal{E}(T)$, we let $\psi_T$ and $\psi_e$ be the usual element-bubble and facet-bubble functions, respectively (see \cite{MR3059294} for more details), satisfying the following properties
	\begin{enumerate}
		\item $\psi_T\in\mathrm{P}_{\ell}(T)$, with $\ell=3$ for 2D or $\ell=4$ for 3D, $\text{supp}(\psi_T)\subset T$, $\psi_T=0$ on $\partial T$ and $0\leq\psi_T\leq 1$ in $T$;
		\item $\psi_e|_T\in\mathrm{P}_{\ell}(T)$,  with $\ell=2$ for 2D or $\ell=3$ for 3D, $\text{supp}(\psi_e)\subset \omega_e:=\cup\{T'\in\mathcal{T}_h\,:\, e\in\mathcal{E}(T')\}$, $\psi_e=0$ on $\partial T\setminus e$ and $0\leq\psi_e\leq 1$ in $\omega_e$.
	\end{enumerate}

The  following result for bubble functions will be needed (see for instance \cite[Lemma 1.3]{MR1284252}).
\begin{lemma}[Bubble function properties]
\label{lmm:bubble_estimates}
Given $k\in\mathbb{N}\cup\{0\}$, and for each $T\in\mathcal{T}_h$ and $e\in\mathcal{E}(T)$, the hold
\begin{equation*}
\|\psi_T q\|_{0,T}^2\leq \|q\|_{0,T}^2\lesssim \|\psi_T^{1/2} q\|_{0,T}^2\quad\forall q\in\mathbb{P}_k(T),
\end{equation*}
\begin{equation*}
\|\psi_e L(p)\|_{0,e}^2\leq \| p\|_{0,e}^2\lesssim \|\psi_e^{1/2} p\|_{0,e}^2\quad\forall p\in\mathbb{P}_k(e),
\end{equation*}
and 
\begin{equation*}
h_e\|p \|_{0,e}^2\lesssim \|\psi_e^{1/2} L(p)\|_{0,T}^2\lesssim
 h_e\|p\|_{0,e}^2\quad\forall p\in\mathbb{P}_k(e),
 \end{equation*}
 where $L$ is the extension operator defined by  $L: C(e)\rightarrow C(T)$ where $C(e)$ and $C(T)$ are the spaces of continuous functions defined in $e$ and $T$, respectively. Also,   $L(p)\in\mathbb{P}_k(T)$ and $L(p)|_e=p$ for all $p\in\mathbb{P}_k(e)$ and where hidden constants depend on $k$ and the shape regularity of the mesh (minimum angle condition).
 \end{lemma}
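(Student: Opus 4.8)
The plan is to reduce each estimate to a fixed reference element and invoke the equivalence of norms on a finite-dimensional polynomial space, transporting everything back by an affine change of variables. First I would dispose of the two left-most inequalities $\|\psi_T q\|_{0,T}\le\|q\|_{0,T}$ and $\|\psi_e L(p)\|_{0,e}\le\|p\|_{0,e}$: these are immediate from the pointwise bounds $0\le\psi_T\le 1$ in $T$ and $0\le\psi_e\le 1$ in $\omega_e$ (hence on $e$), since multiplying by a function bounded by one does not increase the $L^2$-norm.

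For the reverse bound $\|q\|_{0,T}^2\lesssim\|\psi_T^{1/2}q\|_{0,T}^2$ I would pass to the reference simplex $\widehat{T}$ through the affine map $F_T\colon\widehat{T}\to T$, writing $q=\widehat{q}\circ F_T^{-1}$ with $\widehat{q}\in\mathbb{P}_k(\widehat{T})$. Since $\psi_{\widehat{T}}$ is strictly positive in the interior of $\widehat{T}$ and $\mathbb{P}_k(\widehat{T})$ is finite dimensional, the functional $\widehat{q}\mapsto\big(\int_{\widehat{T}}\psi_{\widehat{T}}\,\widehat{q}^2\big)^{1/2}$ is a norm on $\mathbb{P}_k(\widehat{T})$, hence comparable with a constant depending only on $k$ to $\|\widehat{q}\|_{0,\widehat{T}}$. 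Pulling this equivalence back through $F_T$, the Jacobian $|\det DF_T|$ multiplies both sides and cancels, so only shape-regularity constants survive; this gives the element estimate, and the facet inequality $\|p\|_{0,e}^2\lesssim\|\psi_e^{1/2}p\|_{0,e}^2$ follows in exactly the same way on a reference facet $\widehat{e}$ with $\psi_{\widehat{e}}>0$ in its relative interior.

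For the two-sided bound $h_e\|p\|_{0,e}^2\lesssim\|\psi_e^{1/2}L(p)\|_{0,T}^2\lesssim h_e\|p\|_{0,e}^2$ I would again map to the reference configuration. Using that $L$ is linear and injective on $\mathbb{P}_k(\widehat{e})$ and that $\psi_{\widehat{e}}$ is positive on the relative interior of $\widehat{e}$, the quantity $\big(\int_{\widehat{T}}\psi_{\widehat{e}}\,\widehat{L}(\widehat{p})^2\big)^{1/2}$ is a norm on $\mathbb{P}_k(\widehat{e})$, equivalent to $\|\widehat{p}\|_{0,\widehat{e}}$. Transporting back, the volume integral over $T$ scales like $h_T^{\texttt{d}}$ whereas the surface integral over $e$ scales like $h_e^{\,\texttt{d}-1}$; since $h_T\simeq h_e$ by shape regularity, the two scalings differ by exactly one power of $h_e$, which is precisely the factor appearing in the statement.

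The only mildly delicate point is the bookkeeping of the powers of $h_T$ and $h_e$ in this last step and the verification that every hidden constant depends solely on $k$ and on the minimum-angle condition of $\mathcal{T}_h$; there is no genuine obstacle, as all the substantive content is the norm equivalence on the single fixed reference element.
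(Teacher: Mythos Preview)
Your argument is correct and is precisely the classical scaling/norm-equivalence proof; the paper itself does not prove this lemma but merely cites it from Verf\"urth \cite[Lemma~1.3]{MR1284252}, where the same reference-element argument is carried out. There is nothing to compare: you have supplied the standard proof that the paper omits.
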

We recall the following inverse inequality, proved in  \cite[Theorem 3.2.6]{MR0520174}.
\begin{lemma}[Inverse inequality]\label{inversein}
Let $l,m\in\mathbb{N}\cup\{0\}$ such that $l\leq m$. Then, for each $T\in\mathcal{T}_h$ there holds
\begin{equation*}
|q|_{m,T}\lesssim h_T^{l-m}|q|_{l,T}\quad\forall q\in\mathbb{P}_k(T),
\end{equation*}
where the hidden constant depends on $k,l,m$ and the shape regularity of the partition.
\end{lemma}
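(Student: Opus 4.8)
The plan is to reduce the estimate to a fixed reference simplex by affine equivalence, prove the inequality there by a finite-dimensionality argument, and then transfer the bound back to $T$ using standard scaling estimates together with the shape-regularity of $\mathcal{T}_h$. Fix once and for all a reference element $\widehat{T}$, and for $T\in\mathcal{T}_h$ let $F_T(\widehat{\bx})=B_T\widehat{\bx}+\boldsymbol{b}_T$ be the affine bijection with $F_T(\widehat{T})=T$. To each $q\in\mathbb{P}_k(T)$ I associate the pullback $\widehat{q}:=q\circ F_T\in\mathbb{P}_k(\widehat{T})$, which again is a polynomial of degree at most $k$.

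First I would establish the inequality on the reference element, namely that there is a constant $\widehat{C}=\widehat{C}(k,l,m)$ with $|\widehat{q}|_{m,\widehat{T}}\le\widehat{C}\,|\widehat{q}|_{l,\widehat{T}}$ for every $\widehat{q}\in\mathbb{P}_k(\widehat{T})$. The point is that $\mathbb{P}_k(\widehat{T})$ is finite dimensional, so any two norms on it are equivalent; the only subtlety is that $|\cdot|_{l,\widehat{T}}$ is merely a seminorm, vanishing exactly on $\mathbb{P}_{l-1}(\widehat{T})$. However, since $m\ge l>l-1$, the seminorm $|\cdot|_{m,\widehat{T}}$ also vanishes on $\mathbb{P}_{l-1}(\widehat{T})$, so the asserted bound holds trivially (both sides vanish) on that subspace. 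Passing to the quotient $\mathbb{P}_k(\widehat{T})/\mathbb{P}_{l-1}(\widehat{T})$, on which $|\cdot|_{l,\widehat{T}}$ descends to a genuine norm, the equivalence of norms on a finite-dimensional space yields the constant $\widehat{C}$. (The case $l=0$ is covered as well, since then $\mathbb{P}_{l-1}=\{\boldsymbol{0}\}$ and $|\cdot|_{0,\widehat{T}}$ is already a norm.)

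Next I would invoke the classical scaling estimates for affine maps: there is a constant $C$, independent of $T$, with $|\widehat{q}|_{l,\widehat{T}}\le C\|B_T\|^{l}|\det B_T|^{-1/2}|q|_{l,T}$ and $|q|_{m,T}\le C\|B_T^{-1}\|^{m}|\det B_T|^{1/2}|\widehat{q}|_{m,\widehat{T}}$. Chaining the second estimate, the reference inequality, and the first estimate causes the determinant factors to cancel and gives $|q|_{m,T}\le C\,\widehat{C}\,\|B_T^{-1}\|^{m}\|B_T\|^{l}\,|q|_{l,T}$. Shape-regularity of the family $\mathcal{T}_h$ then furnishes $\|B_T\|\lesssim h_T$ and $\|B_T^{-1}\|\lesssim h_T^{-1}$, whence $\|B_T^{-1}\|^{m}\|B_T\|^{l}\lesssim h_T^{l-m}$ and the claim follows, with a hidden constant depending only on $k$, $l$, $m$ and the shape-regularity parameter.

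The main obstacle is precisely the reference-element step: one must justify using the seminorm $|\cdot|_{l,\widehat{T}}$ on the right-hand side even though it is not a norm. The quotient-space argument---equivalently, the observation that both seminorms annihilate $\mathbb{P}_{l-1}(\widehat{T})$---is what makes the finite-dimensional norm-equivalence applicable; everything else is routine bookkeeping of the affine scaling constants and a direct appeal to shape regularity.
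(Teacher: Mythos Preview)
Your argument is correct and is precisely the classical proof of the inverse inequality via affine equivalence to a reference element, finite-dimensional norm equivalence on the quotient $\mathbb{P}_k(\widehat{T})/\mathbb{P}_{l-1}(\widehat{T})$, and shape-regular scaling of $\|B_T\|$ and $\|B_T^{-1}\|$. The paper does not supply its own proof of this lemma but simply recalls it from \cite[Theorem~3.2.6]{MR0520174}, whose proof is exactly the one you have outlined, so your proposal matches the intended argument.
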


We also invoke the following two results. The first was proved in  \cite[Lemma 4.3]{MR2293249} and \cite[Lemma 4.9]{MR3453481} for the two and three dimensional cases, respectively. The second was proved in \cite[Lemma 4.10]{MR3453481}.
\begin{lemma}\label{lmm:invcurl}
Let $\btau_h\in\mathbb{L}^2(\O)$ be a piecewise polynomial of degree $k\geq 0$ on each $T\in\mathcal{T}_h$ such that approximates $\btau\in\mathbb{L}^2(\O)$, where  $\mathbf{curl}(\btau)=\boldsymbol{0}$ on each $T\in\mathcal{T}_h$. Then, there  holds
\begin{equation*}
\|\mathbf{curl}(\btau_h)\|_{0,T}\lesssim  h_T^{-1}\|\btau-\btau_h\|_{0,T}\quad \forall T\in\mathcal{T}_h,
\end{equation*}
where the hidden constant is  independent of $h$.
\end{lemma}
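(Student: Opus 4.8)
The plan is to run the classical bubble-function localization argument of \cite{MR2293249,MR3453481}. The essential point is that a direct inverse inequality cannot be applied to $\btau-\btau_h$, since $\btau$ is only assumed to lie in $\mathbb{L}^2(\O)$; the element bubble function lets us trade this for an integration by parts on each element, after which the inverse estimate is applied only to a genuine polynomial.

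First I would dispose of the degenerate case $k=0$: then $\btau_h|_T$ is constant, so $\curl(\btau_h)|_T=\boldsymbol{0}$ and the bound holds trivially. Assume henceforth $k\ge 1$, and fix $T\in\mathcal{T}_h$. Set $\bw:=\curl(\btau_h)|_T$, which is a (vector-valued in 2D, tensor-valued in 3D) polynomial of degree at most $k-1$ on $T$, componentwise. Using the element-bubble $\psi_T$ and the first estimate of Lemma~\ref{lmm:bubble_estimates} applied componentwise, I obtain $\|\bw\|_{0,T}^2\lesssim\|\psi_T^{1/2}\bw\|_{0,T}^2=\int_T\psi_T\,\bw:\bw$. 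Since $\curl(\btau)=\boldsymbol{0}$ on $T$ by hypothesis, $\bw=\curl(\btau_h-\btau)$ there in the sense of $\mathbb{H}(\curl,T)$, so the right-hand side equals $\int_T\psi_T\,\bw:\curl(\btau_h-\btau)$.

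Next I would integrate by parts over $T$: because $\psi_T\bw$ is a polynomial vanishing on $\partial T$, the tangential boundary contribution $\int_{\partial T}(\psi_T\bw\times\bn)\cdot(\btau_h-\btau)$ disappears, leaving $\int_T\curl(\psi_T\bw):(\btau_h-\btau)$, which by Cauchy--Schwarz is at most $\|\curl(\psi_T\bw)\|_{0,T}\,\|\btau_h-\btau\|_{0,T}$. Since $\psi_T\bw\in\mathbb{P}_{\ell+k-1}(T)$ with $\ell=3$ in 2D and $\ell=4$ in 3D, the inverse inequality of Lemma~\ref{inversein} together with $0\le\psi_T\le 1$ yields $\|\curl(\psi_T\bw)\|_{0,T}\lesssim h_T^{-1}\|\psi_T\bw\|_{0,T}\le h_T^{-1}\|\bw\|_{0,T}$. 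Chaining the three estimates gives $\|\bw\|_{0,T}^2\lesssim h_T^{-1}\|\bw\|_{0,T}\,\|\btau_h-\btau\|_{0,T}$, and dividing by $\|\bw\|_{0,T}$ (the conclusion being trivial when it vanishes) delivers the claim, with a constant depending only on $k$ and the shape regularity.

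I expect the only delicate point to be making the integration-by-parts step rigorous: one must observe that $\curl(\btau_h-\btau)$ is a well-defined $\mathbb{L}^2$ field on $T$ — legitimate because $\btau_h|_T$ is a polynomial and, by hypothesis, $\btau|_T\in\mathbb{H}(\curl,T)$ with $\curl(\btau|_T)=\boldsymbol{0}$ — and to track the transpose conventions in the tensorial Green's identity carefully so that the boundary term appearing is exactly the one annihilated by $\psi_T|_{\partial T}=0$. Once this is settled, the remaining steps are entirely routine.
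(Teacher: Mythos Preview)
The paper does not give its own proof of this lemma; it is simply quoted as a known result from \cite[Lemma~4.3]{MR2293249} (2D) and \cite[Lemma~4.9]{MR3453481} (3D). Your argument is precisely the standard bubble-function localization used in those references---localize with $\psi_T$, use $\curl(\btau)=\boldsymbol{0}$, integrate by parts exploiting $\psi_T|_{\partial T}=0$, then apply the inverse inequality to the polynomial $\psi_T\bw$---and it is correct, including your remark that the only point requiring care is the tensorial Green identity and the fact that $\btau|_T\in\mathbb{H}(\curl,T)$ makes the integration by parts legitimate.
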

\begin{lemma}
\label{lmm:jump_control}
Let $\btau_h\in\mathbb{L}^2(\O)$ be a piecewise polynomial of degree $k\geq 0$ on each $T\in \CT_h$ and let $\btau\in\mathbb{L}^2(\O)$ be such that $\curl(\btau)=\boldsymbol{0})$ in $\O$. Then, there holds
\begin{equation*}
\left\|\jump{\btau_h\times\bn_e}\right\|_{0,e}\lesssim h_e^{-1/2}\|\btau-\btau_h\|_{0,\omega_e}\quad\forall e\in\mathcal{E}(\O),
\end{equation*}
where the hidden constant is independent of $h$.
\end{lemma}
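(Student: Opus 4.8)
The plan is to localize the jump with the facet-bubble function $\psi_e$ and to exploit that $\curl\btau=\boldsymbol{0}$ in $\O$ kills the contribution of $\btau$ to the tangential jump. Fix an interior face $e\in\mathcal{E}_h(\O)$, let $\omega_e=\{T_1,T_2\}$ be the two elements sharing $e$, and put $\boldsymbol{\rho}_e:=\jump{\btau_h\times\bn_e}$, a polynomial of degree at most $k$ on $e$. With the extension operator $L$ of Lemma \ref{lmm:bubble_estimates}, I define the test field $\boldsymbol{\phi}_e:=\psi_e\,L(\boldsymbol{\rho}_e)$ on $\omega_e$, extended by zero to $\O\setminus\omega_e$. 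Since $\psi_e$ vanishes on $\partial T\setminus e$ for $T\in\omega_e$ and $L(\boldsymbol{\rho}_e)|_e=\boldsymbol{\rho}_e$ from both elements, $\boldsymbol{\phi}_e$ is continuous, piecewise polynomial and supported in $\bar\omega_e$; hence $\boldsymbol{\phi}_e\in[\H^1_0(\O)]^{n\times n}$ and $\boldsymbol{\phi}_e|_e=\psi_e\,\boldsymbol{\rho}_e$.

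By the facet-bubble estimate of Lemma \ref{lmm:bubble_estimates},
\[
\|\boldsymbol{\rho}_e\|_{0,e}^2\ \lesssim\ \|\psi_e^{1/2}\boldsymbol{\rho}_e\|_{0,e}^2\ =\ \int_e\boldsymbol{\phi}_e:\jump{\btau_h\times\bn_e},
\]
so it suffices to control the last integral. Integrating by parts for $\curl$ row-wise on each $T\in\omega_e$ and summing --- the terms on $\partial T\setminus e$ and on $\partial\omega_e$ vanish because $\boldsymbol{\phi}_e=\boldsymbol{0}$ there, and the two occurrences of $e$ combine into the jump since $T_1$ and $T_2$ see opposite orientations of $\bn_e$ --- yields $\int_e\boldsymbol{\phi}_e:\jump{\btau_h\times\bn_e}=\int_{\omega_e}\curl\btau_h:\boldsymbol{\phi}_e-\int_{\omega_e}\btau_h:\curl\boldsymbol{\phi}_e$, with $\curl\btau_h$ taken elementwise. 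Using $\curl\btau=\boldsymbol{0}$ in $\O$, $\btau\in\mathbb{H}(\curl,\O)$ and $\boldsymbol{\phi}_e\in[\H^1_0(\O)]^{n\times n}$, the global Green identity gives $\int_{\omega_e}\btau:\curl\boldsymbol{\phi}_e=\int_\O\btau:\curl\boldsymbol{\phi}_e=\int_\O\curl\btau:\boldsymbol{\phi}_e=0$; subtracting and writing $\curl\btau_h=\curl(\btau_h-\btau)$ elementwise (again by $\curl\btau=\boldsymbol{0}$), I arrive at
\[
\|\boldsymbol{\rho}_e\|_{0,e}^2\ \lesssim\ \int_{\omega_e}\curl\btau_h:\boldsymbol{\phi}_e\ -\ \int_{\omega_e}(\btau_h-\btau):\curl\boldsymbol{\phi}_e.
\]

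It remains to bound the two volume integrals by $h_e^{-1/2}\|\btau-\btau_h\|_{0,\omega_e}\,\|\boldsymbol{\rho}_e\|_{0,e}$. From the third bound of Lemma \ref{lmm:bubble_estimates} and $0\le\psi_e\le1$, $\|\boldsymbol{\phi}_e\|_{0,T}\le\|\psi_e^{1/2}L(\boldsymbol{\rho}_e)\|_{0,T}\lesssim h_e^{1/2}\|\boldsymbol{\rho}_e\|_{0,e}$; the inverse inequality of Lemma \ref{inversein} applied to the polynomial $\boldsymbol{\phi}_e|_T$ gives $\|\curl\boldsymbol{\phi}_e\|_{0,T}\lesssim h_T^{-1}\|\boldsymbol{\phi}_e\|_{0,T}\lesssim h_T^{-1}h_e^{1/2}\|\boldsymbol{\rho}_e\|_{0,e}$; and, since $\curl\btau=\boldsymbol{0}$ on each $T$, Lemma \ref{lmm:invcurl} gives $\|\curl\btau_h\|_{0,T}\lesssim h_T^{-1}\|\btau-\btau_h\|_{0,T}$. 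Inserting these into the previous display, applying Cauchy--Schwarz, and using $h_T\sim h_e$ for $T\in\omega_e$ by shape regularity, we get $\|\boldsymbol{\rho}_e\|_{0,e}^2\lesssim h_e^{-1/2}\|\btau-\btau_h\|_{0,\omega_e}\|\boldsymbol{\rho}_e\|_{0,e}$, hence the claim after dividing by $\|\boldsymbol{\rho}_e\|_{0,e}$. The two-dimensional case is identical, with $\btau\times\bn_e$ replaced by $\btau\,\mathbf{t}_e$ and the vector $\curl$ by the scalar $\rot$/$\curl$ pair in the row-wise Green identity.

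The step I expect to be the main obstacle is making the use of $\curl\btau=\boldsymbol{0}$ rigorous: the tangential trace of an $\mathbb{H}(\curl)$ field belongs only to $\mathbb{H}^{-1/2}$, so the vanishing of the $\btau$-part of the jump cannot be asserted pointwise but only after pairing with the smooth, compactly supported bubble $\boldsymbol{\phi}_e\in[\H^1_0(\O)]^{n\times n}$ through the global integration-by-parts identity, as above. The orientation bookkeeping of $\bn_e$ on $T_1$ versus $T_2$ in the row-wise tensor Green formula is the only other delicate point, and it is routine.
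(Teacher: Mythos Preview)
Your proof is correct and follows the standard Verf\"urth bubble-function argument for jump residuals: localize with $\psi_e$, integrate by parts elementwise to convert the face integral into volume terms, use $\curl\btau=\boldsymbol{0}$ (globally, via the pairing with the compactly supported $\boldsymbol{\phi}_e$) to subtract $\btau$, and finish with the inverse inequality and Lemma~\ref{lmm:invcurl}. The paper itself does not prove this lemma but simply invokes \cite[Lemma 4.10]{MR3453481}; your argument is precisely the one found in that reference, so there is nothing to contrast.
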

Now our task is to bound each of the contributions of $\eta_T$ in \eqref{eq:local_full}. We begin with the term
$$h_T^2\left\|\curl\left\{\frac{1}{2\mu}\bsig_h^d\right\}\right\|_{0,T}^2.$$
Let us define $\btau=\bsig^{\dd}/(2\mu)$. Clearly $\curl(\btau)=\boldsymbol{0}$ since $\nabla\bu=\bsig^{\dd}/(2\mu)$. Define $\btau_h:=\bsig_h^{\dd}/(2\mu)$. Hence it is easy to obtain
\begin{equation*}
\|\btau-\btau_h\|_{0,T}\leq\frac{\sqrt{n}}{2\mu}\|\bsig-\bsig_h\|_{0,T}.
\end{equation*}
Then, applying Lemma \ref{lmm:invcurl} for $\btau$ defined as above, we obtain that 
\begin{equation}
\label{eq:bound1}
h_T^2\left\|\curl\left\{\frac{1}{2\mu}\bsig_h^d\right\}\right\|_{0,T}^2\lesssim \|\bsig-\bsig_h\|_{0,T}^2.
\end{equation}
Now, for  the term 
$$h_T^2\left\|\nabla\bu_h-\frac{1}{2\mu}\bsig_h^{\dd} \right\|^2_{0,T},$$
given  an element $T\in\CT_h$, let us define $\Upsilon_T:=\nabla\bu_h-\boldsymbol{\chi}_h$ where
$
\boldsymbol{\chi}_h:=\bsig_h^{\dd}/(2\mu).
$
Also we set 
$
\boldsymbol{\chi}:=\bsig^{\dd}/(2\mu).
$
 Invoking the estimate   $\Vert \tr(\bsig)\Vert_{0,T}\leq \sqrt{n}\Vert\bsig\Vert_{0,T}$ we obtain

\begin{equation*}
\Vert \boldsymbol{\chi}-\boldsymbol{\chi}_h\Vert_{0,T}\leq\frac{1}{2\mu}\left(\frac{n+\sqrt{n}}{n} \right)\|\bsig-\bsig_h\|_{0,T}.
\end{equation*}
%
%
Since $\bsig-2\mu\nabla\bu+p\mathbb{I}=\boldsymbol{0}$ (cf. \eqref{def:stokes_eigen}), it is clear that  $\nabla\bu=\boldsymbol{\chi}$. Then,  invoking the bubble function $\psi_T$, integrating by parts,  Cauchy-Schwarz inequality, Lemmas \ref{lmm:bubble_estimates} and \ref{inversein}, and the properties of $\psi_T$ given by Lemma \ref{lmm:bubble_estimates}, we obtain
\begin{align*}
\|\Upsilon_T\|_{0,T}^2&\lesssim \|\psi_T^{1/2}\Upsilon_T\|_{0,T}^2=\int_T\psi_T\Upsilon_T:(\nabla(\bu_h-\bu)+(\boldsymbol{\chi}-\boldsymbol{\chi}_h))\\
&= \int_T\bdiv(\psi_T\Upsilon_T)\cdot(\bu-\bu_h)+\int_T\psi_T\Upsilon_T:(\boldsymbol{\chi}-\boldsymbol{\chi}_h) \\
&\lesssim \left\{ h_T^{-1}\|\bu-\bu_h\|_{0,T}+\frac{n+\sqrt{n}}{2\mu n}\|\boldsymbol{\sigma}-\boldsymbol{\sigma}_h\|_{0,T} \right\}\|\Upsilon_T\|_{0,T}.
\end{align*}
 Hence
    \begin{equation}\label{eq:bound2}
    h_T^2\left\|\nabla\bu_h\hspace{-0.04cm}-\frac{1}{2\mu}\bsig_h^{\dd}  \right\|^2_{0,T}\lesssim\hspace{-0.04cm} \|\bu\hspace{-0.04cm}-\hspace{-0.04cm}\bu_h\|_{0,T}^2\hspace{-0.04cm}+\hspace{-0.04cm}h_T^2\|\bsig\hspace{-0.04cm}-\hspace{-0.04cm}\bsig_h\|_{0,T}^2,
    \end{equation}
    where the hidden constant is independent of $h$. Now we study the jump term 
\begin{equation*}
h_e\left\|\jump{\frac{1}{2\mu}\bsig_h^d\times\bn_e}\right\|_{0,e}^2.
\end{equation*}
To do this task,  set $\btau_h=\bsig_h^{\dd}/(2\mu)$ and  $\btau=\bsig^{\dd}/(2\mu)$ in Lemma \ref{lmm:jump_control} and from the definition of the deviator 
tensor, immediately we conclude
\begin{equation}
\label{eq:bound3}
h_e\left\|\jump{\frac{1}{2\mu}\bsig_h^d\times\bn_e}\right\|_{0,e}^2\lesssim \|\bsig-\bsig_h\|_{0,\omega_e}^2.
\end{equation}

All the previous terms are related to $\theta_T$, which is a part of $\eta_T$. Now we bound the rest of the terms. We begin with 
\begin{equation*}
h_T^2\left\|\curl\left( p_h+\frac{1}{n}\tr(\bsig_h)\mathbb{I}\right)\right\|_{0,T}^2.
\end{equation*}
In fact, setting $\btau_h=p_h+(1/n)\tr(\bsig_h)\mathbb{I}$ and $\btau=p+(1/n)\tr(\bsig)\mathbb{I}$ on Lemma \ref{lmm:invcurl} and noticing that
\begin{equation*}
\|\btau-\btau_h\|_{0,T}\lesssim \|p-p_h\|_{0,T}+\|\bsig-\bsig_h\|_{0,T},
\end{equation*}
we immediately obtain 
\begin{equation}
\label{eq:bound4}
h_T^2\left\|\curl\left( p_h+\frac{1}{n}\tr(\bsig_h)\mathbb{I}\right)\right\|_{0,T}^2\lesssim \|p-p_h\|_{0,T}^2+\|\bsig-\bsig_h\|_{0,T}^2.
\end{equation}
Then, gathering \eqref{eq:bound1},  \eqref{eq:bound2}, \eqref{eq:bound3} and \eqref{eq:bound4}
we prove the following result.
\begin{theorem}[Efficiency]
The following estimate holds

\begin{equation*}
\theta^2:=\sum_{T\in \CT_h}\theta_T^2\lesssim (\|\bu-\bu_h\|_{0,\O}^2+\|\bsig-\bsig_h\|_{0,\O}^2+\text{h.o.t.}),
\end{equation*}
and hence
$$
\eta^2:=\sum_{T\in\CT_{h}}\eta_{T}^{2} \lesssim \left( \|\bu-\bu_h\|_{0,\Omega}^2 + \|\bsig-\bsig_h\|_{0,\Omega}^2+\|p-p_h\|_{0,\O}^2+\text{h.o.t}\right),
$$
where the hidden constants are independent of $h$ and the discrete solutions.
\end{theorem}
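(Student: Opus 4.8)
The plan is to bound, term by term, each contribution to $\eta_T^2$ in \eqref{eq:local_full} on a single element or on a small edge patch $\omega_e$, controlling it by local $\L^2$-errors $\|\bu-\bu_h\|_{0,\cdot}$, $\|\bsig-\bsig_h\|_{0,\cdot}$, $\|p-p_h\|_{0,\cdot}$ plus a higher-order contribution, and then to sum over $T\in\CT_h$ using the finite-overlap property of a shape-regular family (each patch meets only a bounded number of elements). Four of the six types of terms are already handled above: the curl term in $\bsig_h^\dd$ by \eqref{eq:bound1}, the term $h_T^2\|\nabla\bu_h-\tfrac1{2\mu}\bsig_h^\dd\|_{0,T}^2$ by \eqref{eq:bound2}, the interior jump of $\bsig_h^\dd$ by \eqref{eq:bound3}, and the curl term in $(p_h+\tfrac1n\tr(\bsig_h))\mathbb{I}$ by \eqref{eq:bound4}. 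It remains to treat four items: (i) the postprocessing residual $\|\Theta_h\bu_h-\bu_h\|_{0,T}$; (ii) the pressure residual $\|p_h+\tfrac1n\tr(\bsig_h)\|_{0,T}$; (iii) the interior pressure jump $h_e\|\jump{[(p_h+\tfrac1n\tr(\bsig_h))\mathbb{I}]\times\bn_e}\|_{0,e}^2$; and (iv) the two families of tangential-trace terms on boundary edges.

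For (i), I would use $\|\Theta_h\bu_h-\bu_h\|_{0,T}\le\|\Theta_h\bu_h-\bu\|_{0,T}+\|\bu-\bu_h\|_{0,T}$ and invoke the superconvergence Lemma \ref{lmm:super} together with \eqref{eq:hot}, so that the term $\|\Theta_h\bu_h-\bu\|_{0,\O}$ collapses into $\text{h.o.t.}$ after summation. For (ii), the continuous identity $p+\tfrac1n\tr(\bsig)=0$ in $\O$ (cf.\ \eqref{def:stokes_eigen2}) gives $\|p_h+\tfrac1n\tr(\bsig_h)\|_{0,T}=\|(p_h-p)+\tfrac1n\tr(\bsig_h-\bsig)\|_{0,T}\lesssim\|p-p_h\|_{0,T}+\|\bsig-\bsig_h\|_{0,T}$, using $\|\tr(\btau)\|_{0,T}\le\sqrt{n}\,\|\btau\|_{0,T}$. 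For (iii), since $(p+\tfrac1n\tr(\bsig))\mathbb{I}=\boldsymbol{0}$, in particular its $\curl$ vanishes elementwise, so Lemma \ref{lmm:jump_control} applies with $\btau_h=(p_h+\tfrac1n\tr(\bsig_h))\mathbb{I}$ and $\btau=(p+\tfrac1n\tr(\bsig))\mathbb{I}$, and bounding $\|\btau-\btau_h\|_{0,\omega_e}\lesssim\|p-p_h\|_{0,\omega_e}+\|\bsig-\bsig_h\|_{0,\omega_e}$ finishes this term.

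For (iv), the key observation is that $\bu=\boldsymbol{0}$ on $\partial\O$ forces the tangential derivative of $\bu$ along each boundary edge to vanish, hence the tangential trace of $\nabla\bu=\tfrac1{2\mu}\bsig^\dd$ is zero on every $e\in\mathcal{E}_h(\partial\O)$; likewise $(p+\tfrac1n\tr(\bsig))\mathbb{I}\equiv\boldsymbol{0}$. Thus each boundary term may be written as the tangential trace of $\tfrac1{2\mu}(\bsig_h^\dd-\bsig^\dd)$ (resp.\ of $(p_h+\tfrac1n\tr(\bsig_h))\mathbb{I}-(p+\tfrac1n\tr(\bsig))\mathbb{I}$), and I would control $h_e\|\cdot\times\bn_e\|_{0,e}^2$ by a standard localization based on the facet-bubble $\psi_e$ extended to the unique element $T$ with $e\subset\partial T$, using the bubble estimates of Lemma \ref{lmm:bubble_estimates} and the inverse inequality of Lemma \ref{inversein}, obtaining $\lesssim\|\bsig-\bsig_h\|_{0,T}^2$ (resp.\ $\lesssim\|p-p_h\|_{0,T}^2+\|\bsig-\bsig_h\|_{0,T}^2$). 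Collecting (i)--(iv) with \eqref{eq:bound1}--\eqref{eq:bound4}, summing over $T\in\CT_h$, and invoking finite overlap yields the bound for $\eta^2$; discarding the pressure-related terms, which are absent from $\theta_T$, yields the bound for $\theta^2$. The main obstacle is item (iv): there is no prepackaged lemma for the boundary tangential traces, so the interior bubble-function argument must be adapted to a single boundary element while keeping all constants mesh independent; everything else is routine bookkeeping together with the overlap count.
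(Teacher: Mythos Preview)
Your proposal is correct and follows the same term-by-term localization strategy as the paper, using the same continuous identities ($\nabla\bu=\tfrac{1}{2\mu}\bsig^{\dd}$ and $p+\tfrac1n\tr(\bsig)=0$) together with Lemmas~\ref{lmm:bubble_estimates}--\ref{lmm:jump_control}. In fact, the paper's own argument is terser than yours: it explicitly derives only \eqref{eq:bound1}--\eqref{eq:bound4} and then concludes directly, leaving the postprocessing residual~(i), the plain pressure residual~(ii), the interior pressure jump~(iii), and both boundary tangential-trace terms~(iv) implicit, whereas you spell out each of these and correctly identify the boundary case as the only place requiring a small adaptation of the bubble argument.
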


\section{Numerical experiments}
\label{sec:numerics}
The aim of this section is to confirm, computationally, that the proposed method works correctly and delivers an accurate approximation of the spectrum 
of $\bT$. Moreover, we will confirm the theoretical results with the computation of the convergence order by means of a least-square fitting or a sufficiently accurate solution. The reported results have been obtained with a FEniCS code \cite{MR3618064}, together with the mesh generator Gmsh \cite{geuzaine2009gmsh}.

Throughout this section, we denote by $N$ the number of degrees of freedom and $\lambda$ the eigenvalues. We denote by $\err_f(\lambda_i)$ and $\err_r(\lambda_i)$  the errors on the $i$-th eigenvalue using the pseudostresss-pressure-velocity and pseudostress-velocity scheme, respectively, whereas the effectivity indexes with respect to $\eta$ or $\theta$ and the eigenvalue $\lambda_i$ are defined by
$$
\eff_f(\lambda_i):=\frac{\err_f(\lambda_i)}{\eta^2},\quad \eff_r(\lambda_i):=\frac{\err_r(\lambda_i)}{\theta^2}.
$$

In order to apply the adaptive finite element method,  we use blue-green marking strategy to refine each $T'\in \CT_{h}$ whose indicator $\beta_{T'}$ satisfies
$$
\beta_{T'}\geq 0.5\max\{\beta_{T}\,:\,T\in\CT_{h} \},
$$
where $\beta_{T}$ corresponds to either local estimator $\theta_{T}$ or $\eta_{T}$.

We divide each numerical tests in two parts: one related to the performance of the estimator \eqref{eq:global_est} and the second for \eqref{eq:global_full}.

\subsubsection{Test 1. Comparison between finite element families.} 
The  aim of this tests is to show the performance of the adaptive scheme when the $\mathbb{RT}_0$ and the lowest order $\mathbb{BDM}$ families are considered to approximate the pseudostress tensor. Note that in \cite{lepe2020priori} it is stated that $\mathbb{BDM}$ results in more stable uniform approximations, however, the computational cost makes the work with $\mathbb{RT}_0$ the most suitable. The domain for this test is an L-shaped $\Omega=(-1,1)\times(-1,1)\backslash((-1,0)\times(-1,0))$. In figure \ref{fig:l-shape-initial-mesh} we show the initial mesh. Note that for this problem we have a re-entrant corner in $(0,0)$, so the expected order of convergence for the eigenvalues is at least $\mathcal{O}(h^{r})$, with $r\geq 1.2$. For this test we have performed $20$ adaptive iterations in order to observe the convergence rate, as well as the refinement around the singularity.

Figure \ref{fig:lshape-error} shows the error curves in the adaptive refinements. An optimal convergence rate $\mathcal{O}(N^{-1})\simeq\mathcal{O}(h^{2})$ is clearly observed. In addition, we note that the adaptive iterations using $\mathbb{BDM}$ mark much fewer elements than with $\mathbb{RT}$. This is explained by the additional degrees of freedom that $\mathbb{BDM}$ has, achieving better approximations in each iteration, thus reducing the contribution of each local estimator. However, the error curves show more pronounced oscillations with respect to the $\mathbb{RT}_0$ approach, probably caused by an under-prediction in the local residual contributions. It is for this reason that in the rest of the experiments we will use $\mathbb{RT}_0$ to approximate $\bsig_h$.

We finish the test with Figure \ref{fig:lshape-mesh-estimador-theta}, where we observe the meshes at iteration 15 when both families are used. The difference in the number of refined elements is evident. 
 \begin{figure}
 \centering
 \includegraphics[scale=0.06,trim= 45cm 2.5cm 35cm 2.5cm,clip]{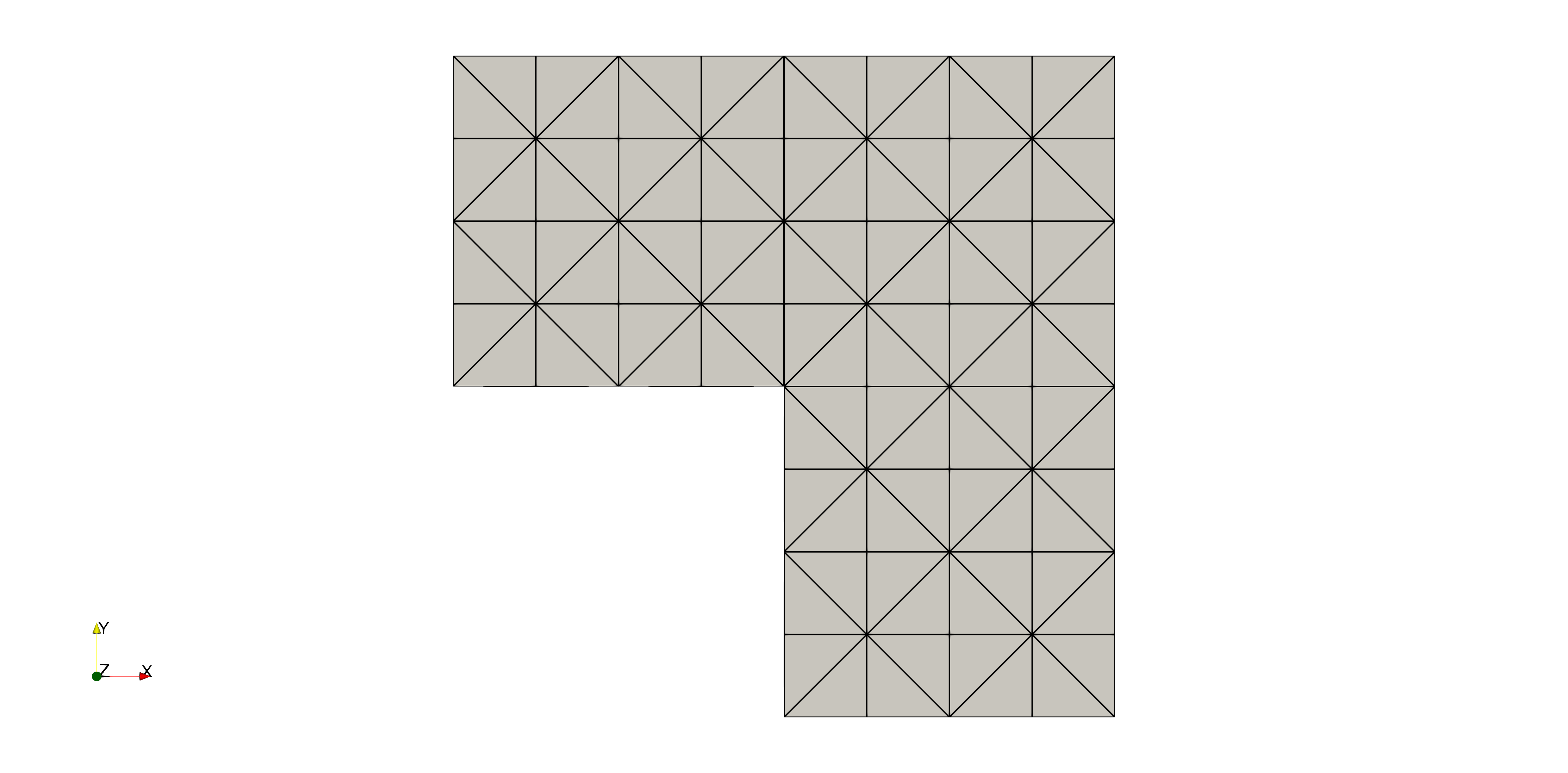}
 \caption{Test 1. Initial mesh configuration.}
 \label{fig:l-shape-initial-mesh}
 \end{figure}
	\begin{figure}
	\centering
	\includegraphics[scale=0.25]{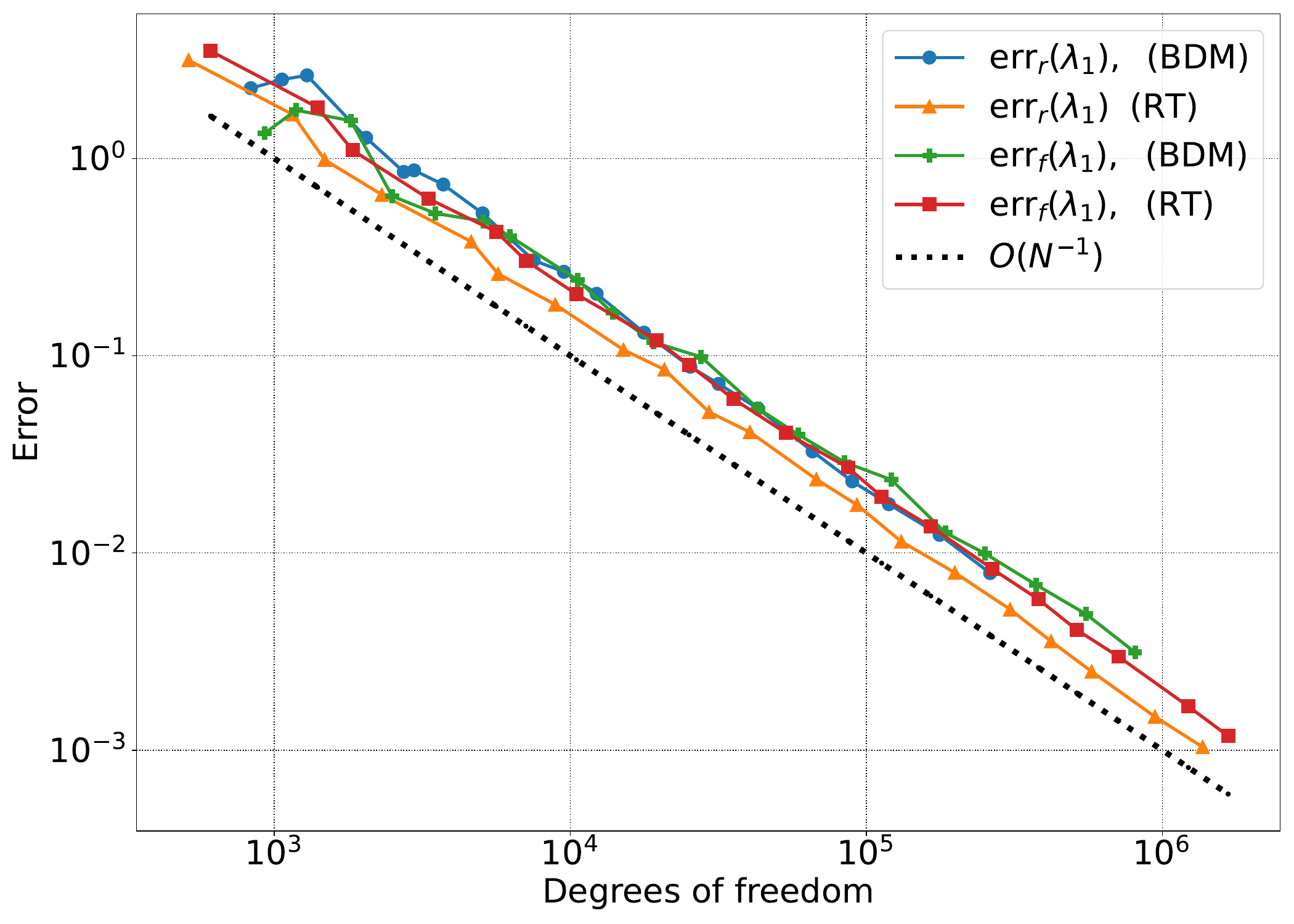}
		%
	\caption{Test 1. Error curves when using $\theta$ and $\eta$ as estimators in the two dimensional L-shaped domain, and using $\mathbb{RT}_0$ and $\mathbb{BDM}$ to approximate $\bsig_{h}$.}
	\label{fig:lshape-error}
\end{figure}
\begin{figure}
	\centering
	\begin{minipage}{0.48\linewidth}
			\includegraphics[scale=0.06,trim= 45cm 2.5cm 35cm 2.5cm, clip]{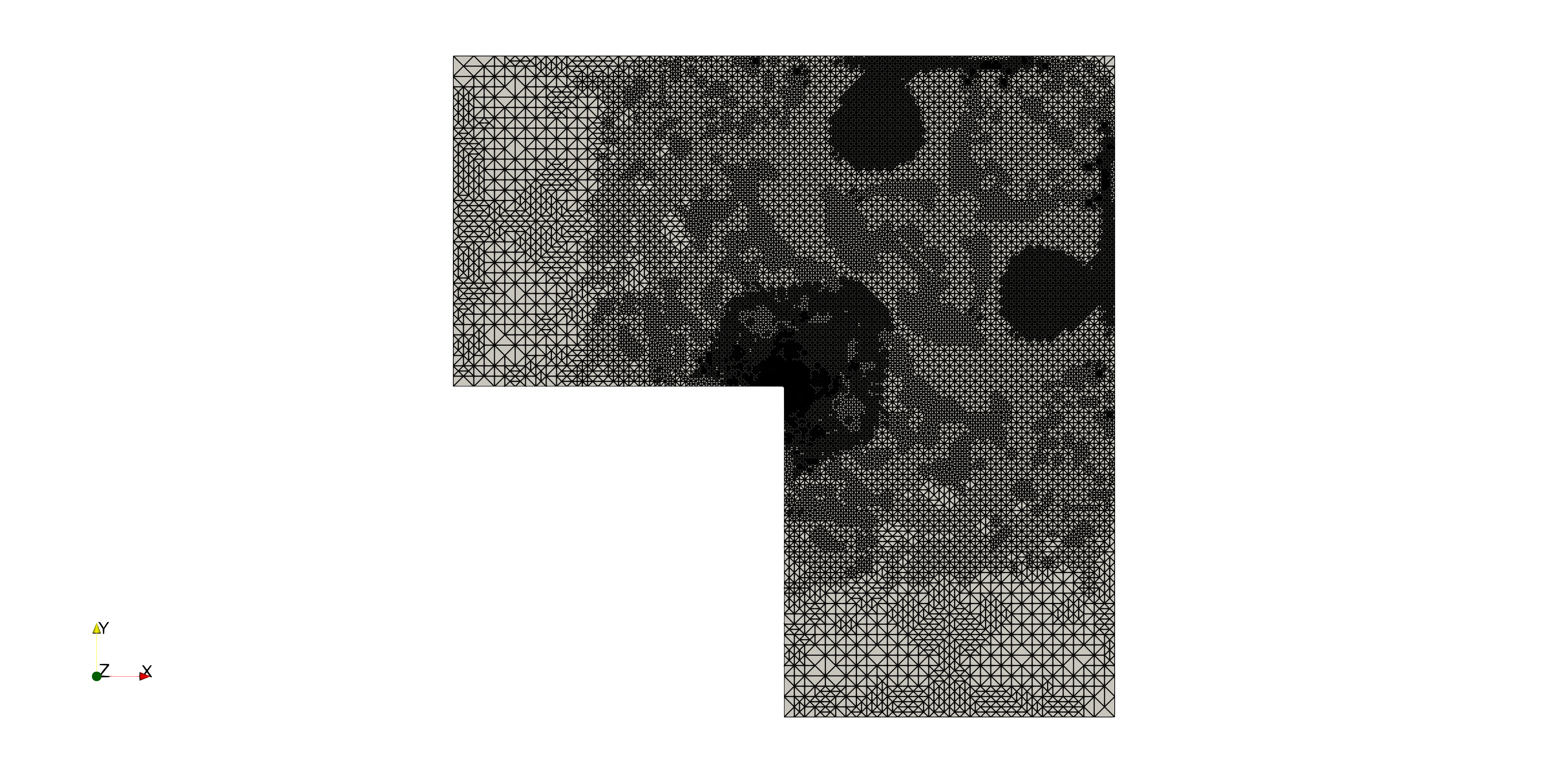}
		\end{minipage}
	\begin{minipage}{0.48\linewidth}
			\includegraphics[scale=0.06,trim= 45cm 2.5cm 35cm 2.5cm,, clip]{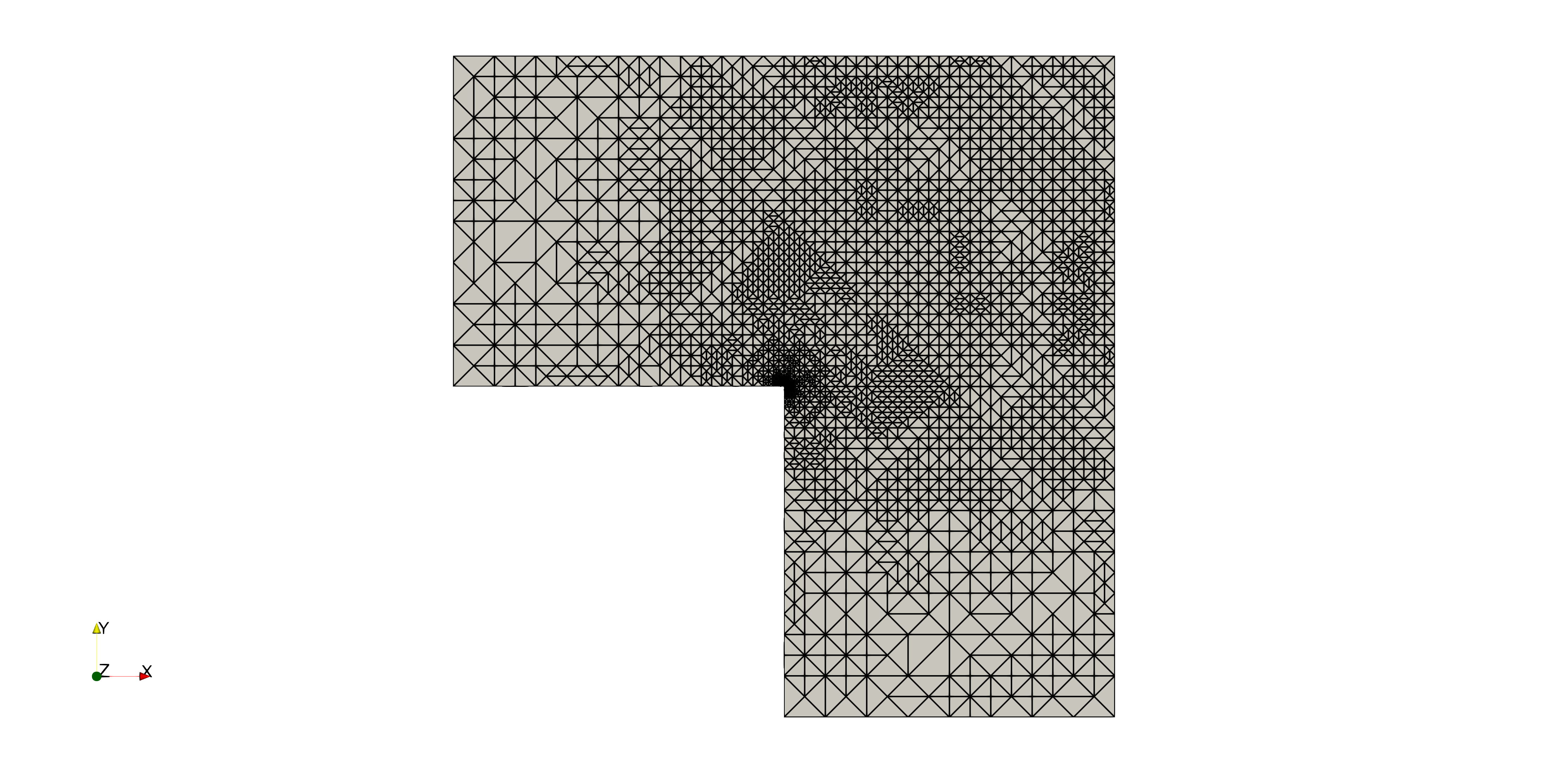}
		\end{minipage}\\
	\begin{minipage}{0.48\linewidth}
			\includegraphics[scale=0.06,trim= 45cm 2.5cm 35cm 2.5cm,, clip]{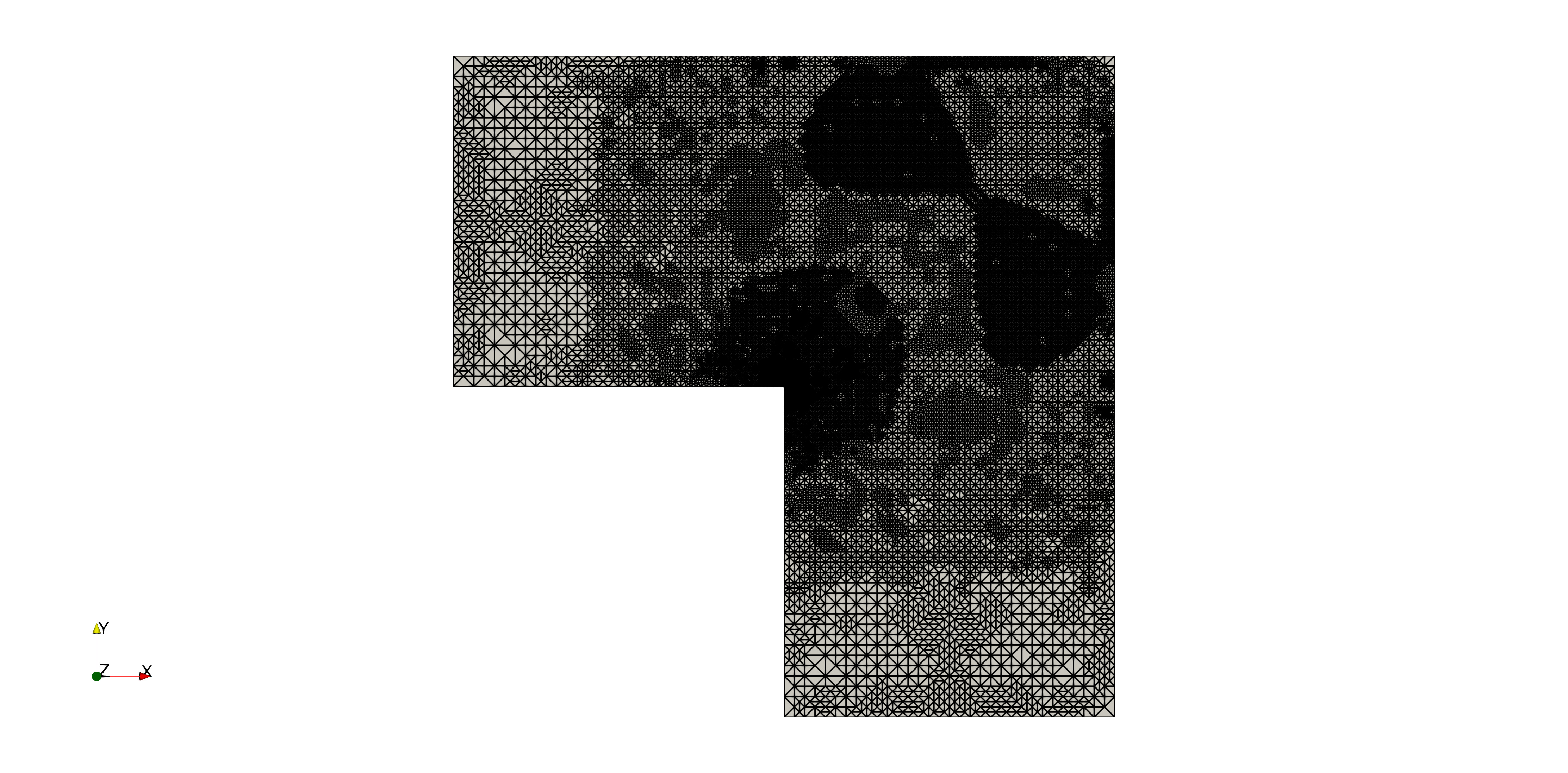}
		\end{minipage}
	\begin{minipage}{0.48\linewidth}
			\includegraphics[scale=0.06,trim= 45cm 2.5cm 35cm 2.5cm,, clip]{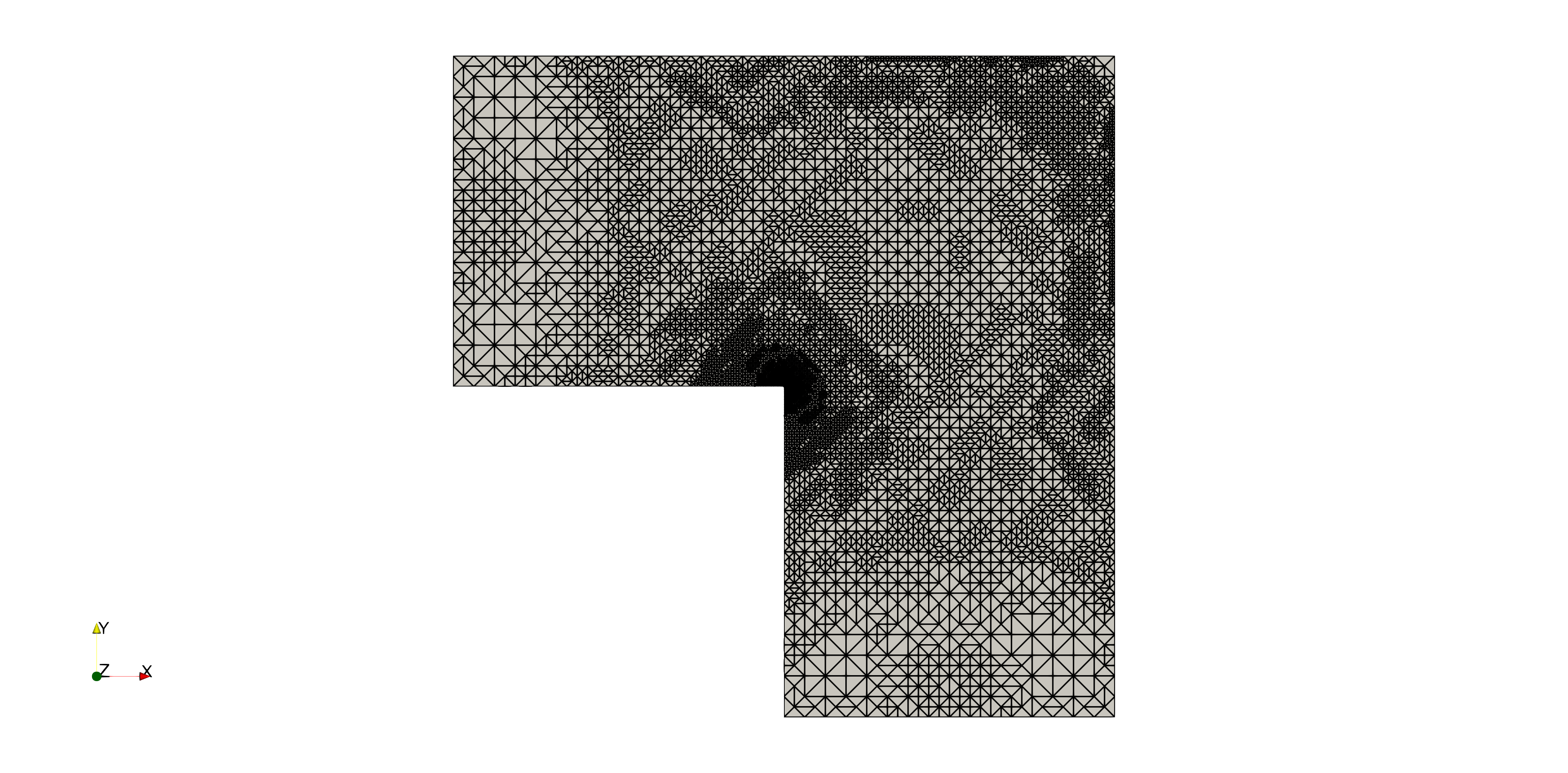}
		\end{minipage}
	\caption{Test 1. Adaptive meshes in the fifteenth iteration using $\mathbb{RT}_0$ (left column) and $\mathbb{BDM}$ (right column) to approximate $\bsig_{h}$. Top row: meshes using the estimator $\theta$. Bottom row: meshes using estimator  $\eta$.}
	\label{fig:lshape-mesh-estimador-theta}
\end{figure}
\subsubsection{Test 2. The T-shaped domain.} This test aims to confirm that our estimators are able to detect and refine close to the singularity of the domain in order to recover the optimal order of convergence. The domain is defined as $$\Omega:=(-1,1)^2\backslash\big((-1,-1/3)\times(-1,1/2)\cup(1/3,1)\times(-1,1/2)\big).$$

In Figure \ref{fig:t-shape-initial-mesh} we show the initial mesh for this domain. Note that for this geometrical configuration we have two re-entrant corners in $(-1/3,1/2)$ and $(1/3,1/2)$, so the expected order of convergence for the lowest order eigenvalue is roughly $\mathcal{O}(N^{-0.66})\simeq\mathcal{O}(h^{1.32})$ (see, for instance, \cite{lepe2020priori}). Let us remark that on this test we have performed $15$ adaptive iterations in order to observe the convergence rate, as well as the refinement around the singularities.
Table \ref{tabla:tshape-2D-uniform-vs-adaptive} shows the comparative of the uniform and adaptive refinement when using both numerical schemes. It notes that the computed order of convergence using uniform refinements is approximately $\mathcal{O}(h^{1.32})$. Also, we observe that with a fraction of $1/7$ of the degrees of freedom in the uniform refinements, the adaptive numerical schemes approximate the extrapolated eigenvalue with high accuracy. 

In Figure \ref{fig:tshape-mesh-estimador-theta} (top row) we observe several intermediate meshes when we solve the eigenvalue problem using the estimator $\theta$ and $\eta$. Note that the estimators refine near the high pressure gradients. 
Error curves of the two numerical schemes are observed in Figure \ref{fig:tshape-error}, where we observe that the optimal order of convergence is recovered.

In Table \ref{tabla:tshape-2D-scheme1-vs-scheme2} a comparison between the errors and effectivity indexes is reported. Here, we note that both schemes gives similar error behavior, whereas the estimators values suggest that the contribution of the residuals in the pseudostress-pressure-velocity will yield to a different marking of the elements near the singularity. This is contrasted with the adaptive meshes in Figure \ref{fig:tshape-mesh-estimador-theta}, where we observe that the estimator $\eta$ marks more elements around the singularities than $\theta$.

We finish this test by showing the lowest computed eigenfunctions when using the pseudostress-pressure-velocity scheme. The velocity field and pressure contour lines are depicted in Figure \ref{fig:tshape-u-p}.

\begin{table}
	{\footnotesize
		\caption{Test 2: Comparison between the lowest computed eigenvalue in the pseudostress-velocity scheme using uniform and adaptive refinements.}
		\label{tabla:tshape-2D-uniform-vs-adaptive}
		\begin{center}
			\begin{tabular}{l c c c  |l c c c }
				\toprule
				\multicolumn{4}{c}{Pseudostress-velocity} & \multicolumn{4}{|c}{Pseudostress-pressure-velocity}\\
				\midrule
				\multicolumn{2}{c}{Uniform} &\multicolumn{2}{c}{Adaptive}& \multicolumn{2}{|c}{Uniform} & \multicolumn{2}{c}{Adaptive}\\
				\midrule
				$N$&$\lambda_{h1}$ & $N$&$\lambda_{h1}$&$N$&$\lambda_{h1}$ & $N$&$\lambda_{h1}$ \\ 
				\midrule
				597		& 59.07677  &597&		 59.07677  &709		& 57.92345 &709		& 57.92345  \\
				2313	& 73.04426 &1113		& 70.76256 &2761		& 72.62093 &1291	& 68.71842   \\
				9105	& 77.95812 &2125		& 75.64725 &10897		& 77.83306 &2517	& 74.86457 \\
				36129	& 79.73824 &3779		& 78.23674 &43297		& 79.70449 &4887	& 77.84735 \\
				143937	& 80.40890 &7477		& 79.53645 &172609	& 80.40016 &7145	& 78.97951 \\
				574593	& 80.67726 &10609		& 80.01029 &689281	& 80.67504 &9495	& 79.55730  \\
				2296065	& 80.80075 &16887		& 80.39234 &2754817	& 80.80019 &15637	& 80.19072 \\
				&  		&25239	   & 80.53787&& 			  		   &23815	& 80.36188 \\
				&  		&35005	   & 80.64562&& 			 		   &33865	& 80.54858\\
				&  		&50231		& 80.72616&& 			 		&47179		& 80.65184 \\
				&  		&78023	 	& 80.78096&& 			 		&75235	 	& 80.73734\\
				&  		&108877	& 80.80975&& 			 		&111191	& 80.78710  \\
				&  		&150463	& 80.83179&& 			  		&150976	& 80.81563 \\
				&  		&224137	& 80.84940&& 			  		&224587	& 80.83532 \\
				&  		&332507	& 80.86030&& 			  		&343874	& 80.85180 \\
				\midrule
				Order	&$\mathcal{O}(N^{-0.67})$		&Order	&$\mathcal{O}(N^{-1.10})$& Order	&$\mathcal{O}(N^{-0.68})$		&Order	&$\mathcal{O}(N^{-1.09})$ \\
				$\lambda_1$	& 80.87944 		&$\lambda_1$	& 80.87944 & $\lambda_1$	& 80.87944 		&$\lambda_1$	& 80.87944 \\
				\bottomrule             
			\end{tabular}
	\end{center}}
\end{table}

\begin{table}[H]
	{\footnotesize
		\caption{Test 2: Computed errors, estimators and effectivity indexes on the adaptively refinement meshes for each numerical scheme.}
		\label{tabla:tshape-2D-scheme1-vs-scheme2}
		\begin{center}
			\begin{tabular}{c c c c|c c c c }
				\toprule
				\multicolumn{3}{c}{Pseudostress-velocity} &&& \multicolumn{3}{c}{Pseudostress-pressure-velocity}\\
				\midrule
				$\err_r(\lambda_{h1})$&$\theta^2$&$\eff_r(\lambda_{h1})$ &&& $\err_f(\lambda_{h1})$&$\eta^2$&$\eff_f(\lambda_{h1})$ \\ 
				\midrule
				2.18027e+01	&1.71691e+02& 1.26988e-01 &&&2.29560e+01	&2.01020e+02& 1.14198e-01  \\
				1.01169e+01	&1.00435e+02& 1.00730e-01 &&&1.21610e+01	&1.17826e+02& 1.03212e-01  \\
				5.23219e+00	&6.76280e+01& 7.73672e-02 &&&6.01486e+00	&7.64312e+01& 7.86964e-02 \\
				2.64269e+00	&3.61802e+01& 7.30426e-02 &&&3.03209e+00	&4.05116e+01& 7.48449e-02  \\
				1.34298e+00	&1.87337e+01& 7.16879e-02 &&&1.89993e+00	&2.63974e+01& 7.19739e-02  \\
				8.69150e-01	&1.31928e+01& 6.58808e-02 &&&1.32213e+00	&1.99707e+01& 6.62036e-02  \\
				4.87092e-01	&8.55359e+00& 5.69459e-02 &&&6.88721e-01	&1.25131e+01& 5.50399e-02 \\
				3.41569e-01 &5.61139e+00& 6.08708e-02 &&&5.17553e-01	&8.09004e+00& 6.39740e-02 \\
				2.33812e-01 &4.08210e+00& 5.72774e-02 &&&3.30857e-01	&5.65293e+00& 5.85284e-02 \\
				1.53280e-01 &2.93354e+00& 5.22508e-02 &&&2.27595e-01	&4.16076e+00& 5.47005e-02 \\
				9.84765e-02 &1.87021e+00& 5.26554e-02 &&&1.42101e-01	&2.62431e+00& 5.41479e-02 \\
				6.96847e-02 &1.31854e+00& 5.28498e-02 &&&9.23345e-02	&1.73254e+00& 5.32944e-02  \\
				4.76441e-02 &9.63975e-01& 4.94246e-02 &&&6.38039e-02	&1.28003e+00& 4.98456e-02 \\
				3.00353e-02 &6.64955e-01& 4.51688e-02 &&&4.41141e-02	&8.84236e-01& 4.98895e-02 \\
				1.91353e-02 &4.40433e-01& 4.34466e-02 &&&2.76378e-02	&5.70886e-01& 4.84121e-02  \\
				\bottomrule             
			\end{tabular}
	\end{center}}
\end{table}

\begin{figure}
	\centering
	\includegraphics[scale=0.06,trim= 35cm 2.5cm 35cm 2.5cm,clip]{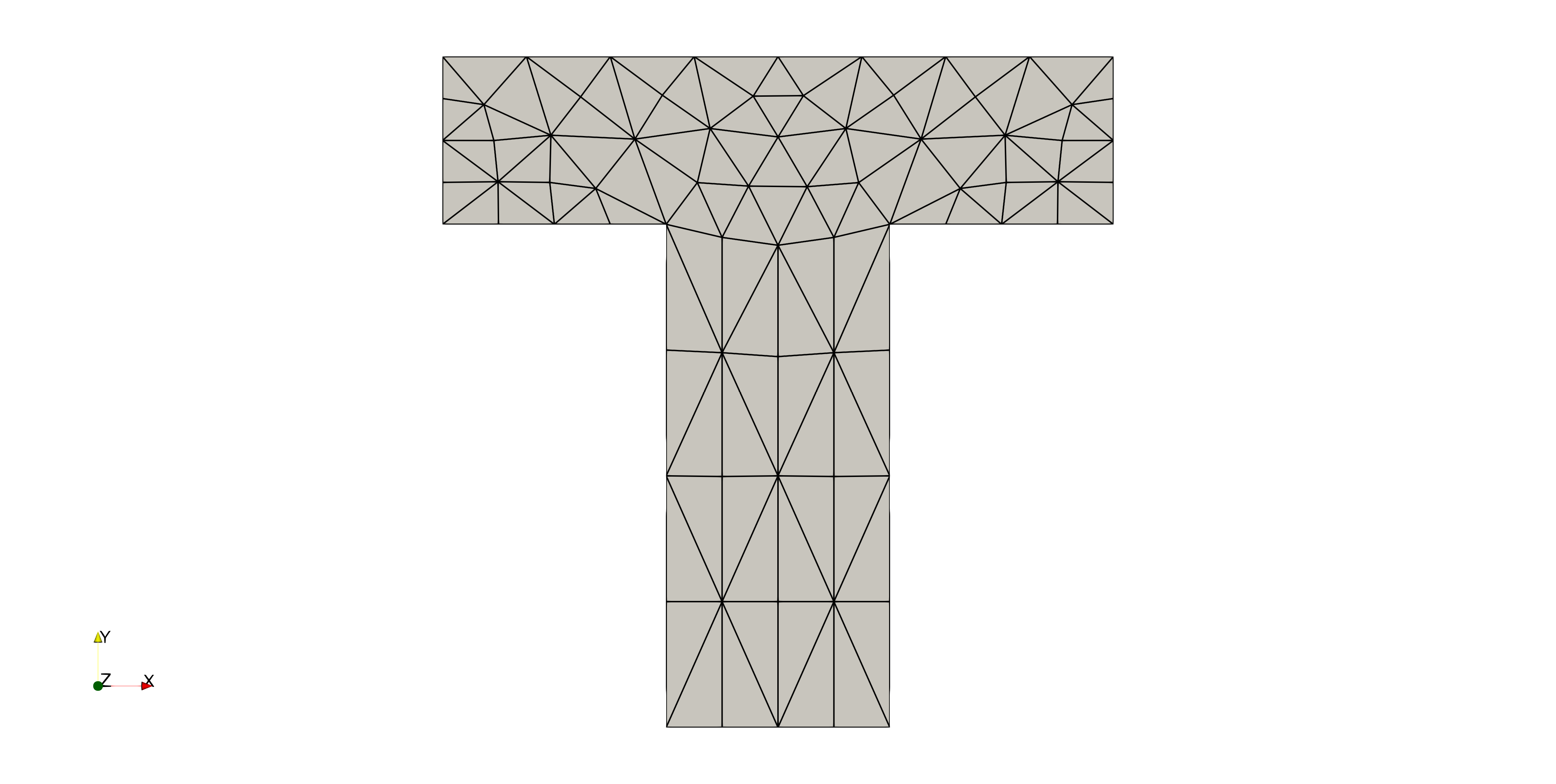}
	\caption{Test 2. Initial mesh configuration.}
	\label{fig:t-shape-initial-mesh}
\end{figure}

\begin{figure}
	\centering
	\begin{minipage}{0.48\linewidth}
		\includegraphics[scale=0.06,trim= 35cm 2.5cm 35cm 2.5cm,, clip]{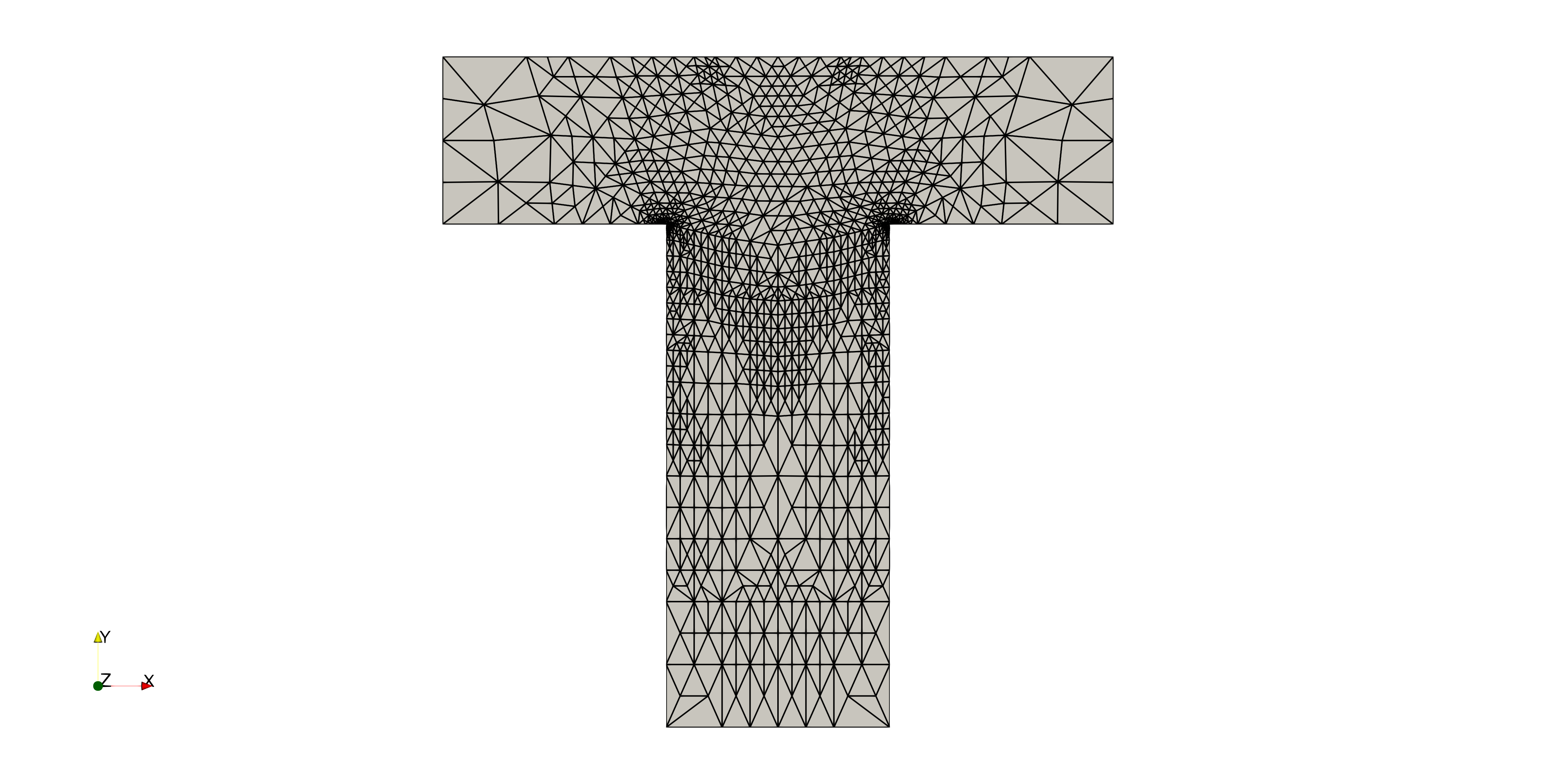}
	\end{minipage}
	\begin{minipage}{0.48\linewidth}
		\includegraphics[scale=0.06,trim= 35cm 2.5cm 35cm 2.5cm,, clip]{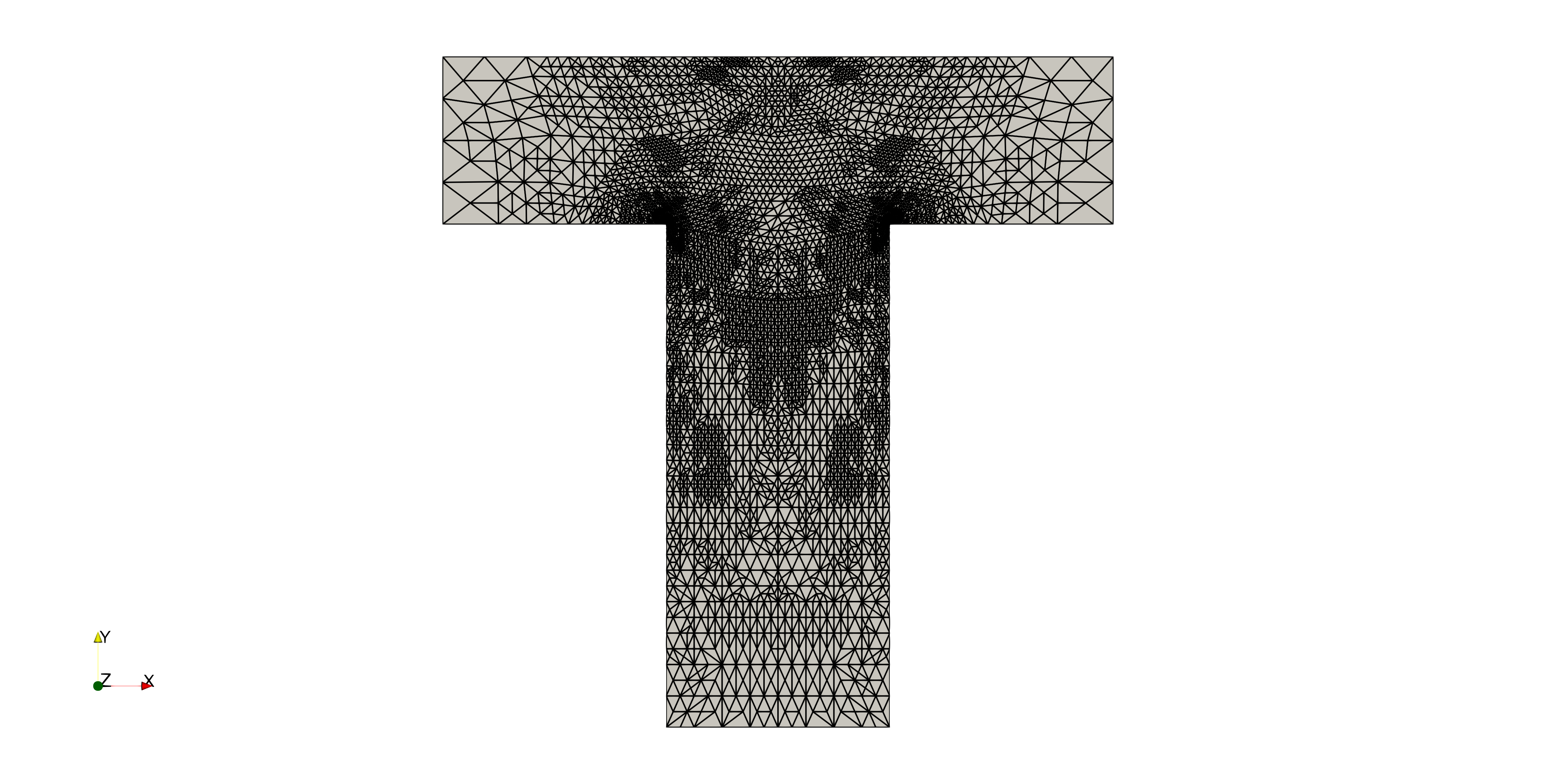}
	\end{minipage}\\
\begin{minipage}{0.48\linewidth}
	\includegraphics[scale=0.06,trim= 35cm 2.5cm 35cm 2.5cm,, clip]{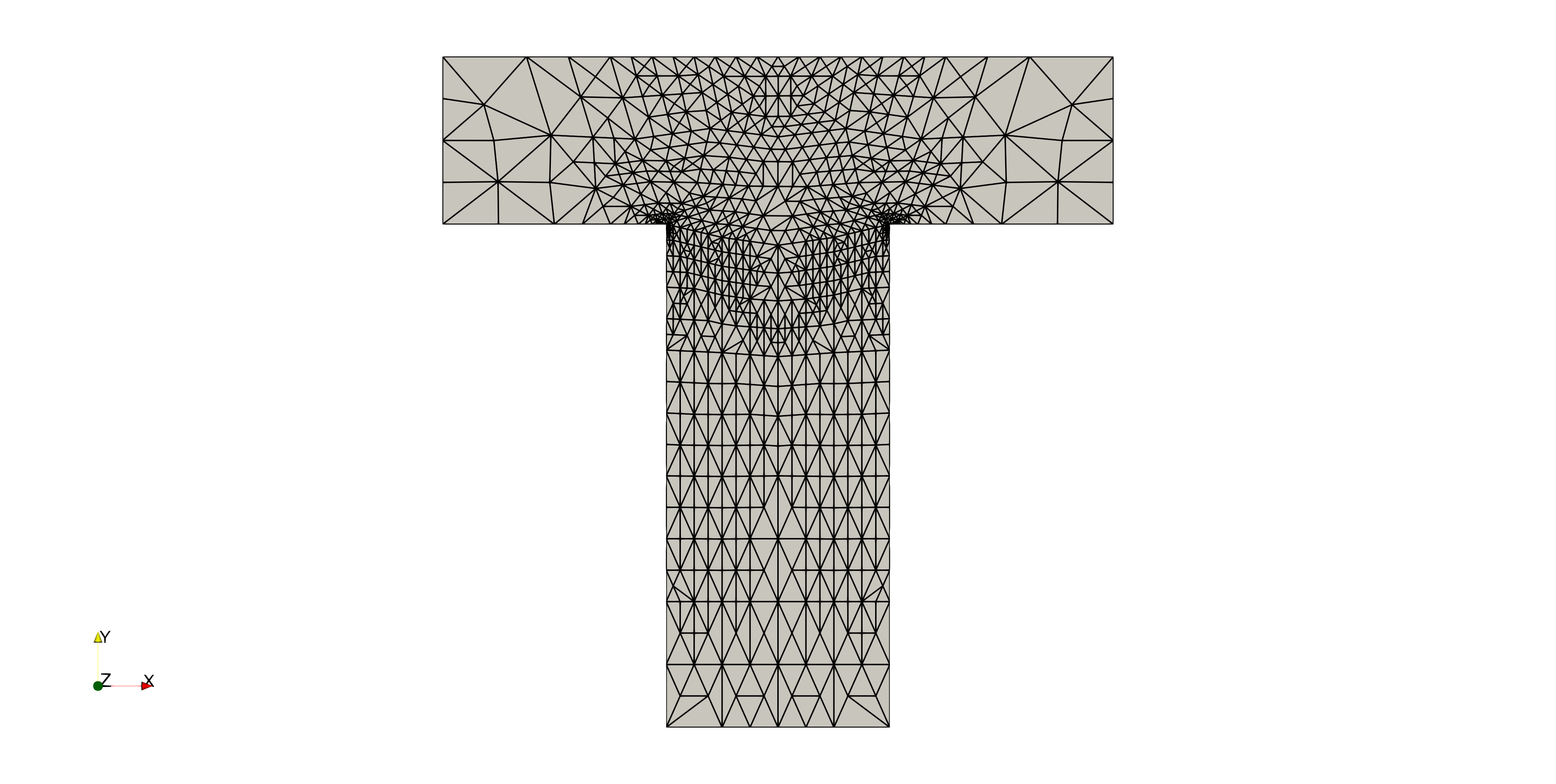}
\end{minipage}
\begin{minipage}{0.48\linewidth}
	\includegraphics[scale=0.06,trim= 35cm 2.5cm 35cm 2.5cm,, clip]{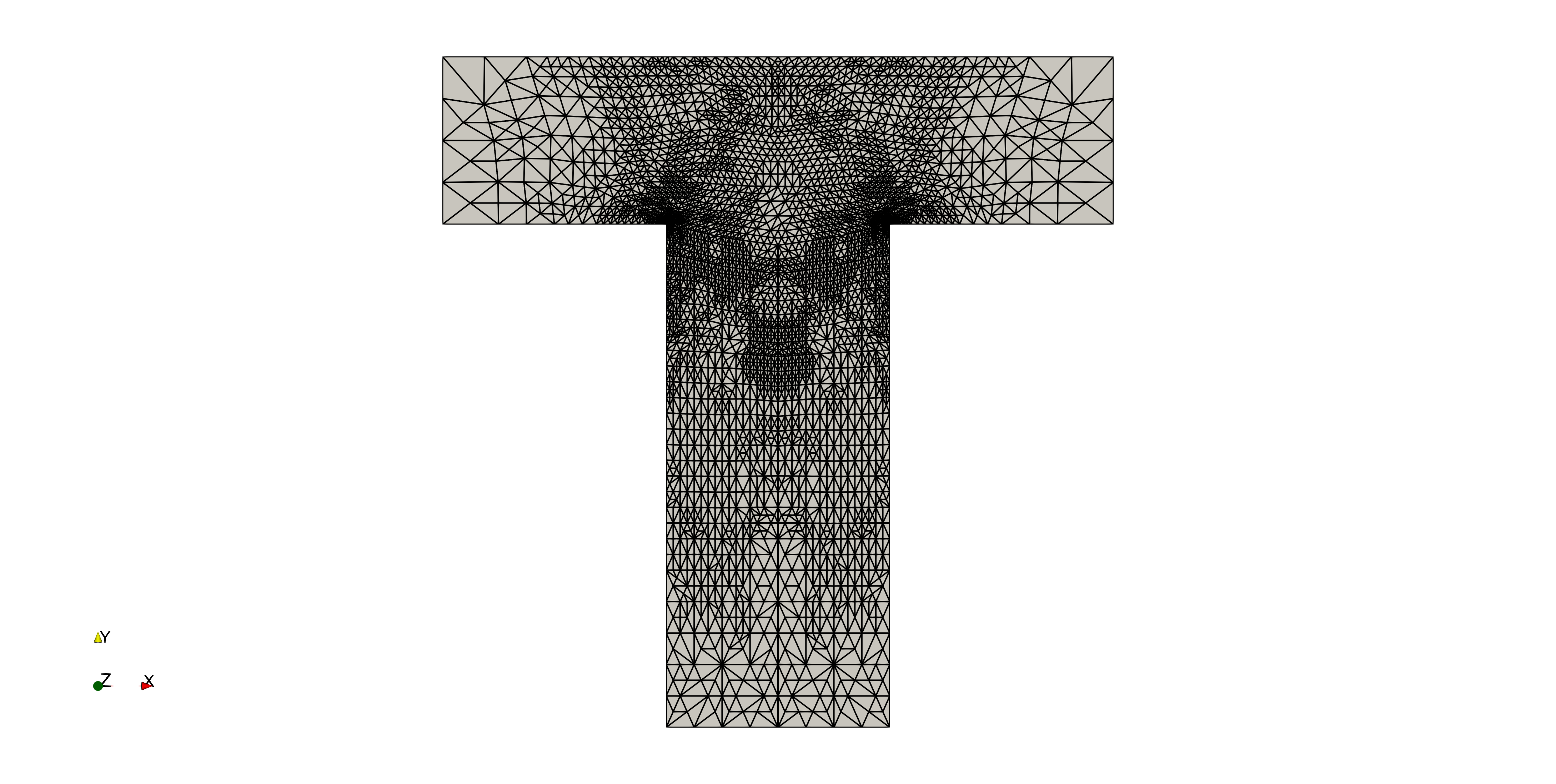}
\end{minipage}
	\caption{Test 2. Intermediate adaptive meshes. Top row: meshes with $10609$ and  $50231$ degrees of freedom using the estimator $\theta$. Bottom row: meshes with $9495$ and $47179$ degrees of freedom using the estimator $\eta$. }
	\label{fig:tshape-mesh-estimador-theta}
\end{figure}
	\begin{figure}
	\centering
	\includegraphics[scale=0.25]{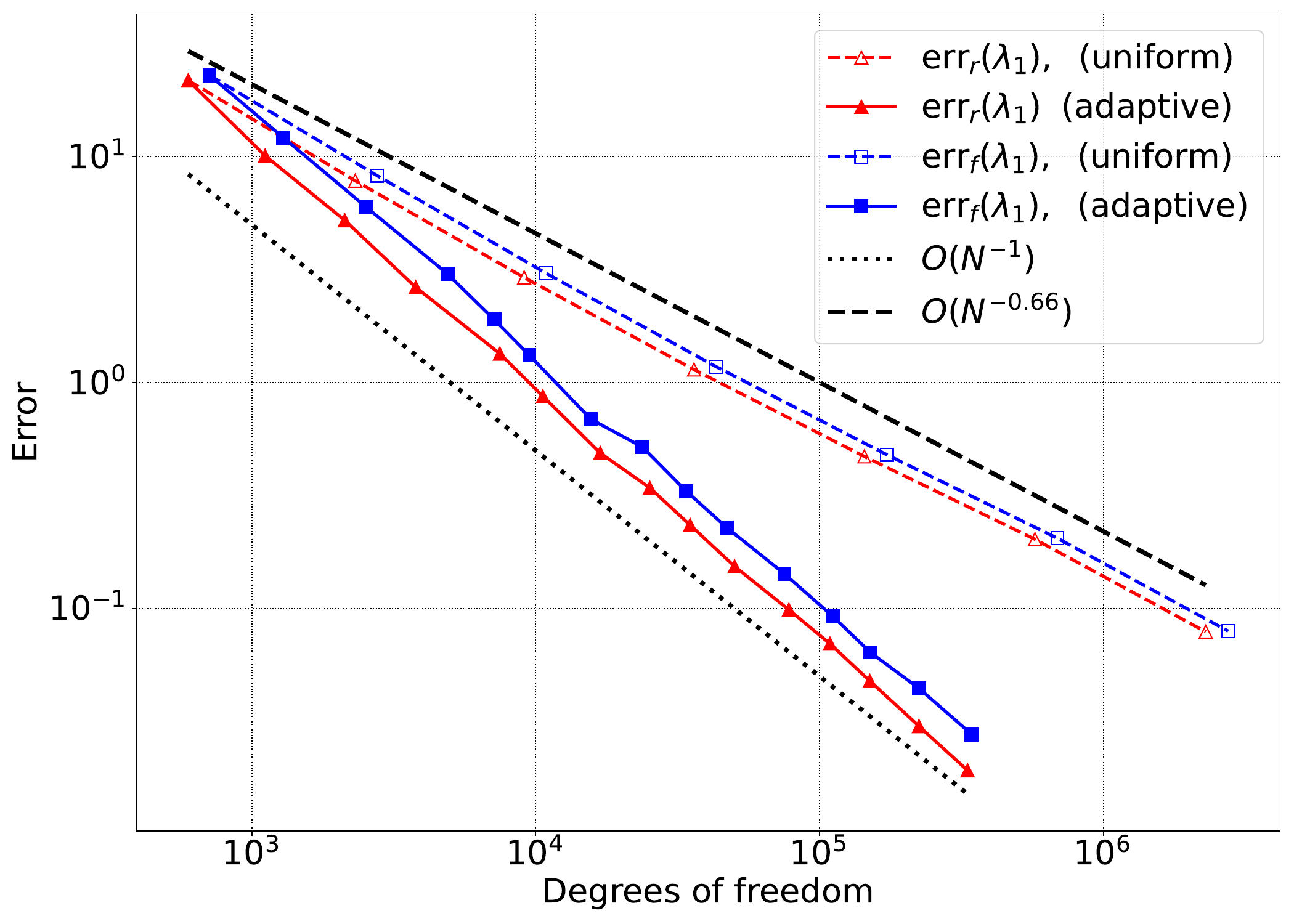}
	\caption{Test 2. Error curves for  $\theta$ and $\eta$  in the two dimensional T-shaped domain, compared with $\mathcal{O}(N^{-1})$.}
	\label{fig:tshape-error}
\end{figure}
\begin{figure}
	\centering
	\begin{minipage}{0.48\linewidth}
		\includegraphics[scale=0.06,trim= 35cm 2.5cm 35cm 2.5cm, clip]{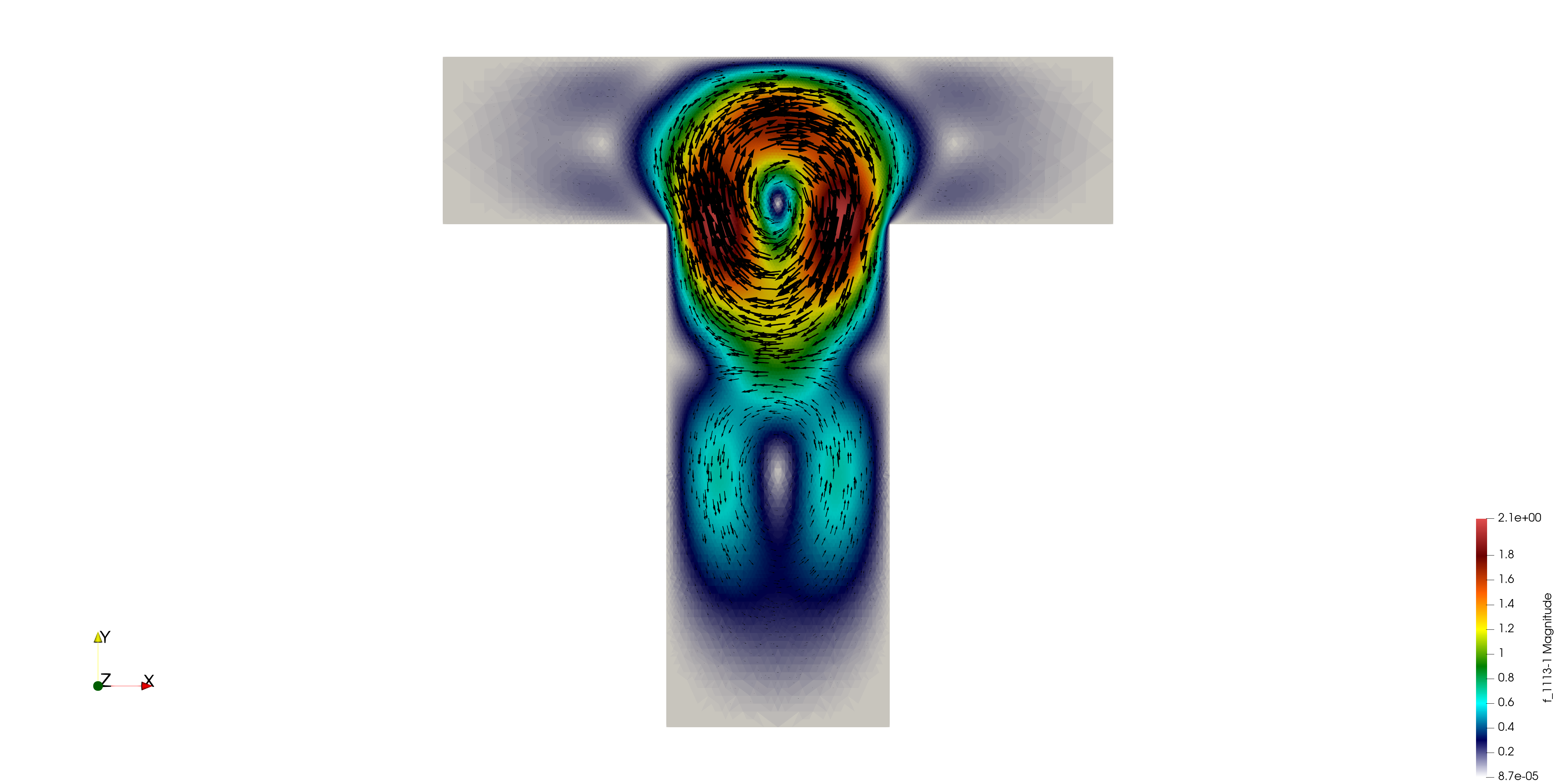}
	\end{minipage}
	\begin{minipage}{0.48\linewidth}
		\includegraphics[scale=0.06,trim= 35cm 2.5cm 35cm 2.5cm, clip]{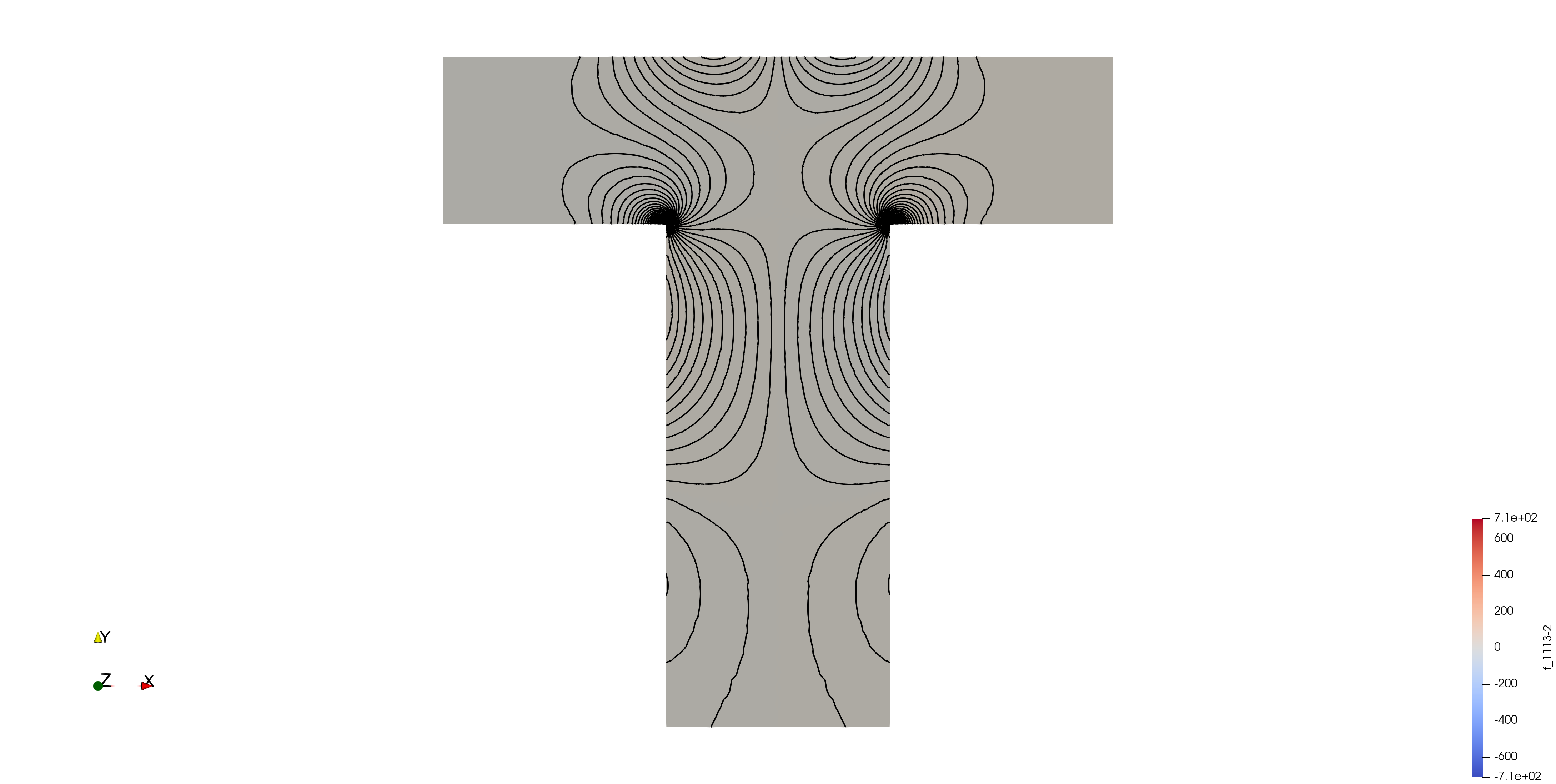}
	\end{minipage}
	\caption{Test 2. Lowest computed velocity field and pressure contour lines using the adaptive refinements and estimator $\eta$.}
	\label{fig:tshape-u-p}
\end{figure}

\subsubsection{Test 3. 3D L-shaped domain.} The goal of this test is to assess the performance of the numerical scheme when solving the eigenvalue problem in a three dimensional shape with a line singularity. The domain is an L shape given by
$$
\Omega:=(-1,1)\times(-1,1)\times(-1,0)\backslash\bigg((-1,0)\times(-1,0)\times(-1,0) \bigg).
$$
The domain presents a singularity on the line $(0,0,z)$, for $z\in[-1,0]$, whose initial mesh is depicted in Figure \ref{fig:L-shape-initial-mesh}. Hence, high gradients of the pressure, and consequently of the pseudostress are expected along this line (see Figure \ref{fig:tshape-3D-u-p}). In Table \ref{tabla:lshape-3D-uniform-vs-adaptive} we compare the performance of both numerical schemes. Both schemes shows that the adaptive scheme is capable to recover the optimal order of convergence $\mathcal{O}(N^{-2/3})$, where $\mathcal{O}(N^{-0.44})$ is the best order that we can expect when using uniform refinements. We remark that the computed convergence rate in this table has been obtained by excluding the first uniform and adaptive refinement. This is because the eigensolver has been configured with the shift as close as possible to the extrapolated eigenvalue. A different configuration or another eigensolver might yield a more accurate value for this first computation, without altering the trend shown here. For instance, in Figure \ref{fig:lshape3D-error} we show the error curves compared with the optimal convergence slope for each case.

In table \ref{tabla:lshape-3D-scheme1-vs-scheme2} we report the respective errors, estimators and effectivity indexes for each adaptive numerical scheme. We note that the estimators $\theta$ and $\eta$ behave like $\mathcal{O}(N^{-2/3})$, hence the effectivity indexes remain bounded above and below. This confirms numerically that the proposed estimators are reliable and efficient, as predicted by the theory.
On the other hand, we observe in Figure \ref{fig:lshape-3D-mesh-estimador-theta} some intermediate meshes obtained in the adaptive iteration using $\theta$ and $\eta$ estimators, respectively. 
We end the test by showing the computed velocity streamlines and pressure isosurfaces computed with the pseudostress-pressure-velocity model. Note the high pressure gradient along the line $(0,0,z)$.

\begin{figure}
	\centering
	\includegraphics[scale=0.06,trim= 25cm 3.5cm 25cm 3.5cm,, clip]{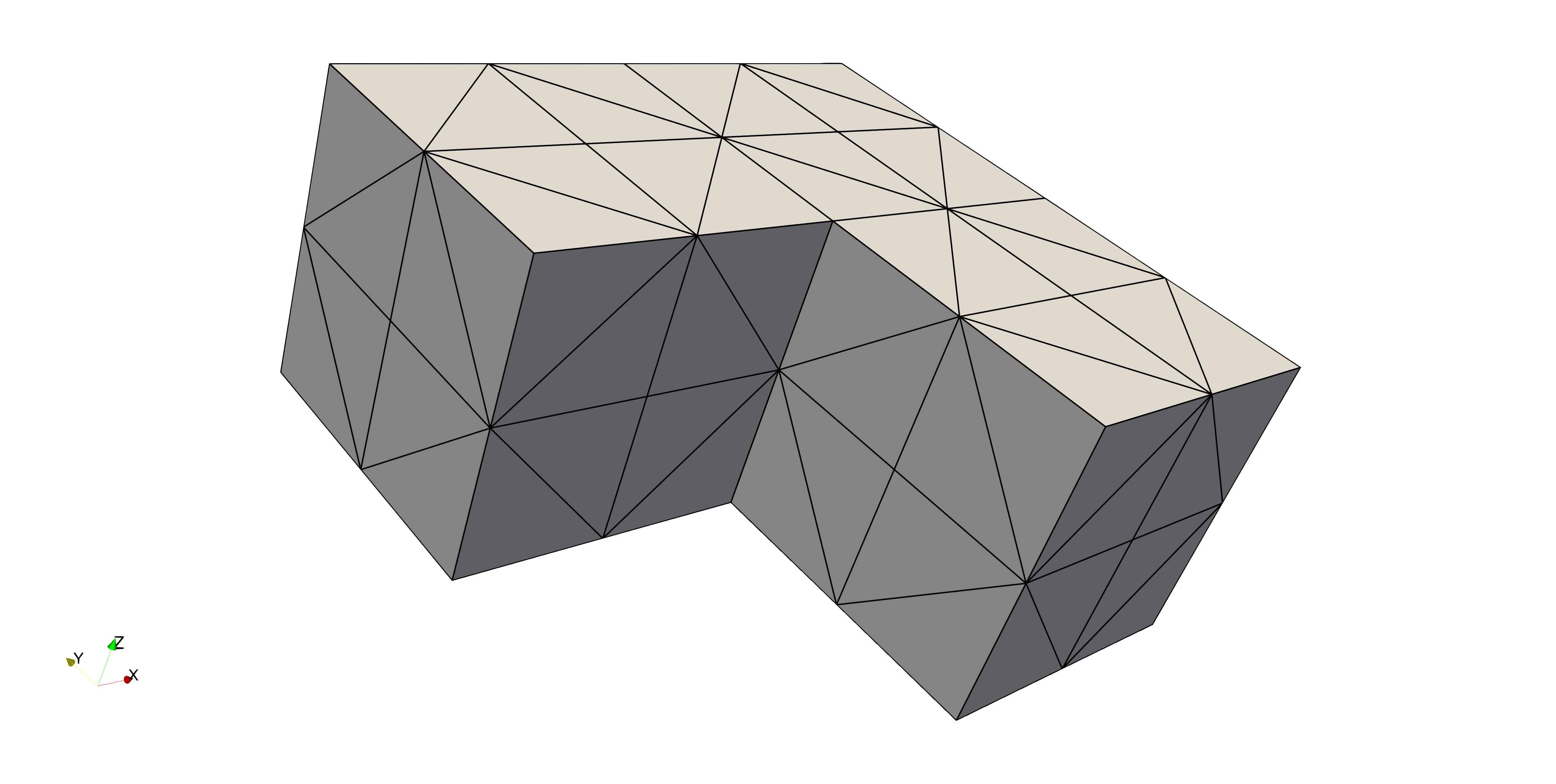}
	\caption{Test 3. Initial shape for the 3D L-shaped domain.}
	\label{fig:L-shape-initial-mesh}
\end{figure}
\begin{table}[h!]
	{\footnotesize
		\caption{Test 3: Comparison between the lowest computed eigenvalue in the pseudostress-velocity scheme using uniform and adaptive refinements.}
		\label{tabla:lshape-3D-uniform-vs-adaptive}
		\begin{center}
			\begin{tabular}{l c c c  |l c c c }
				\toprule
				\multicolumn{4}{c}{Pseudostress-velocity} & \multicolumn{4}{|c}{Pseudostress-pressure-velocity}\\
				\midrule
				\multicolumn{2}{c}{Uniform} &\multicolumn{2}{c}{Adaptive}& \multicolumn{2}{|c}{Uniform} & \multicolumn{2}{c}{Adaptive}\\
				\midrule
				$N$&$\lambda_{h1}$ & $N$&$\lambda_{h1}$&$N$&$\lambda_{h1}$ & $N$&$\lambda_{h1}$ \\ 
				\midrule
				2590	& 39.00981 &2590&		  39.00981 &2859		& 38.73581 &2859	& 38.73581  \\
				20041	& 38.93997 &15784		& 38.93228 &22193		& 38.84700 &17441	& 38.81631  \\
				157633	& 40.67234 &33490		& 40.08680 &174849		& 40.64580 &39910	& 40.09623 \\
				1250305	& 41.36395 &96043		& 40.89823 &1388033		& 41.35700 &109239	& 40.82324 \\
						& 		   &200857		& 41.24361 &			&  		   &237362	& 41.22484 \\
						&  		   &637204		& 41.54121 &			& 		   &697822	& 41.51951  \\
						&  		   &1216459		& 41.64638 &			&		   &1447421	& 41.64402 \\
						&  		   &3623905	    & 41.74001 &			&		   &4224335	& 41.73427 \\
				\midrule
				Order	&$\mathcal{O}(N^{-0.45})$		&Order	&$\mathcal{O}(N^{-0.66})$& Order	&$\mathcal{O}(N^{-0.45})$		&Order	&$\mathcal{O}(N^{-0.65})$ \\
				$\lambda_1$	& 41.81676 		&$\lambda_1$	& 41.81676 & $\lambda_1$	& 41.81676 		&$\lambda_1$	& 41.81676 \\
				\bottomrule             
			\end{tabular}
	\end{center}}
\end{table}
\begin{table}[h!]
	{\footnotesize
		\caption{Test 3: Computed errors, estimators and effectivity indexes on the adaptively refinement meshes for each numerical scheme.}
		\label{tabla:lshape-3D-scheme1-vs-scheme2}
		\begin{center}
			\begin{tabular}{c c c c|c c c c }
				\toprule
				\multicolumn{3}{c}{Pseudostress-velocity} &&& \multicolumn{3}{c}{Pseudostress-pressure-velocity}\\
				\midrule
				$\err_r(\lambda_{h1})$&$\theta^2$&$\eff_r(\lambda_{h1})$ &&& $\err_f(\lambda_{h1})$&$\eta^2$&$\eff_f(\lambda_{h1})$ \\ 
				\midrule
				2.80695e+00 &2.05543e+02& 1.36563e-02 &&&3.08095e+00	&2.13499e+02& 1.44308e-02  \\
				2.88448e+00	&7.57004e+01& 3.81039e-02 &&&3.00045e+00	&8.02030e+01& 3.74107e-02  \\
				1.72997e+00	&5.14925e+01& 3.35965e-02 &&&1.72053e+00	&5.15618e+01& 3.33684e-02 \\
				9.18533e-01	&2.70877e+01& 3.39096e-02 &&&9.93521e-01	&2.87218e+01& 3.45912e-02  \\
				5.73151e-01	&1.85319e+01& 3.09278e-02 &&&5.91919e-01	&1.89016e+01& 3.13158e-02  \\
				2.75556e-01	&8.73397e+00& 3.15499e-02 &&&2.97254e-01	&9.37408e+00& 3.17102e-02  \\
				1.70383e-01	&6.08206e+00& 2.80141e-02 &&&1.72741e-01	&6.22429e+00& 2.77527e-02 \\
				7.67489e-02 &3.00102e+00& 2.55742e-02 &&&8.24883e-02	&3.08255e+00& 2.67597e-02 \\
				\bottomrule             
			\end{tabular}
	\end{center}}
\end{table}
\begin{figure}
	\centering
	\begin{minipage}{0.49\linewidth}\centering
		\includegraphics[scale=0.06,trim= 25cm 4cm 25cm 4cm,, clip]{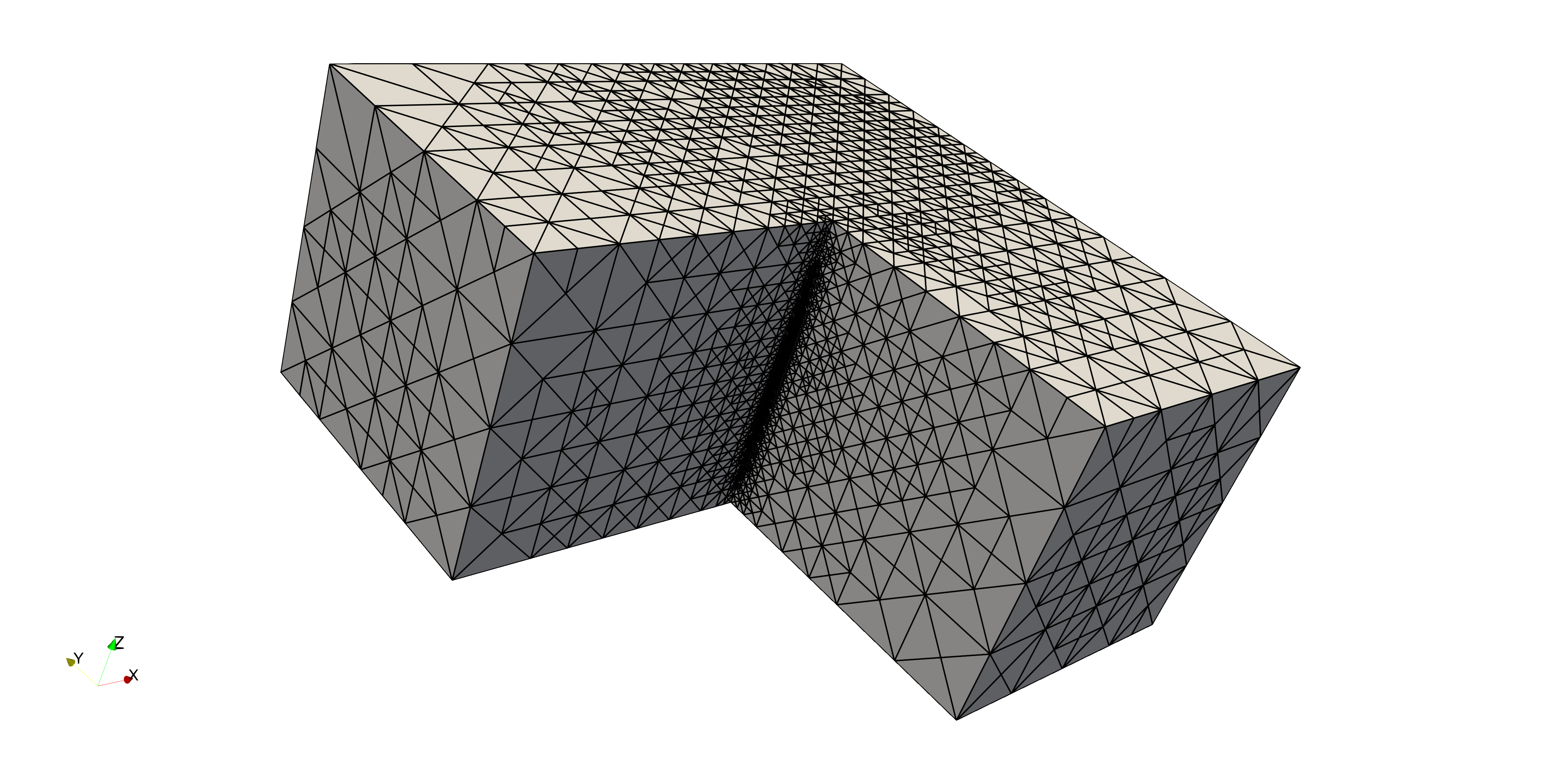}
	\end{minipage}
	\begin{minipage}{0.49\linewidth}\centering
		\includegraphics[scale=0.06,trim= 25cm 4cm 25cm 4cm,, clip]{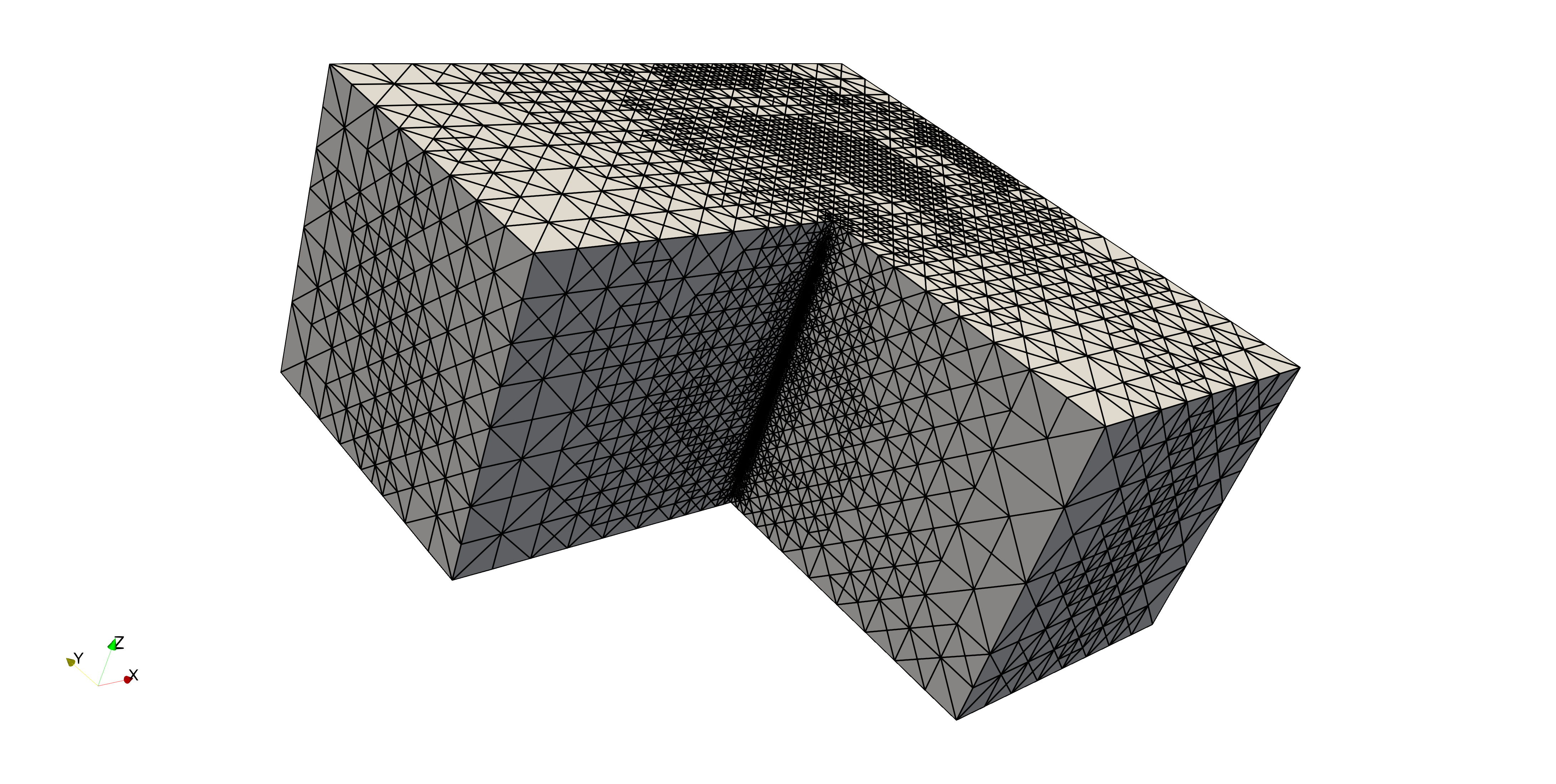}
	\end{minipage}\\
	\begin{minipage}{0.49\linewidth}\centering
		\includegraphics[scale=0.06,trim= 25cm 4cm 25cm 4cm,, clip]{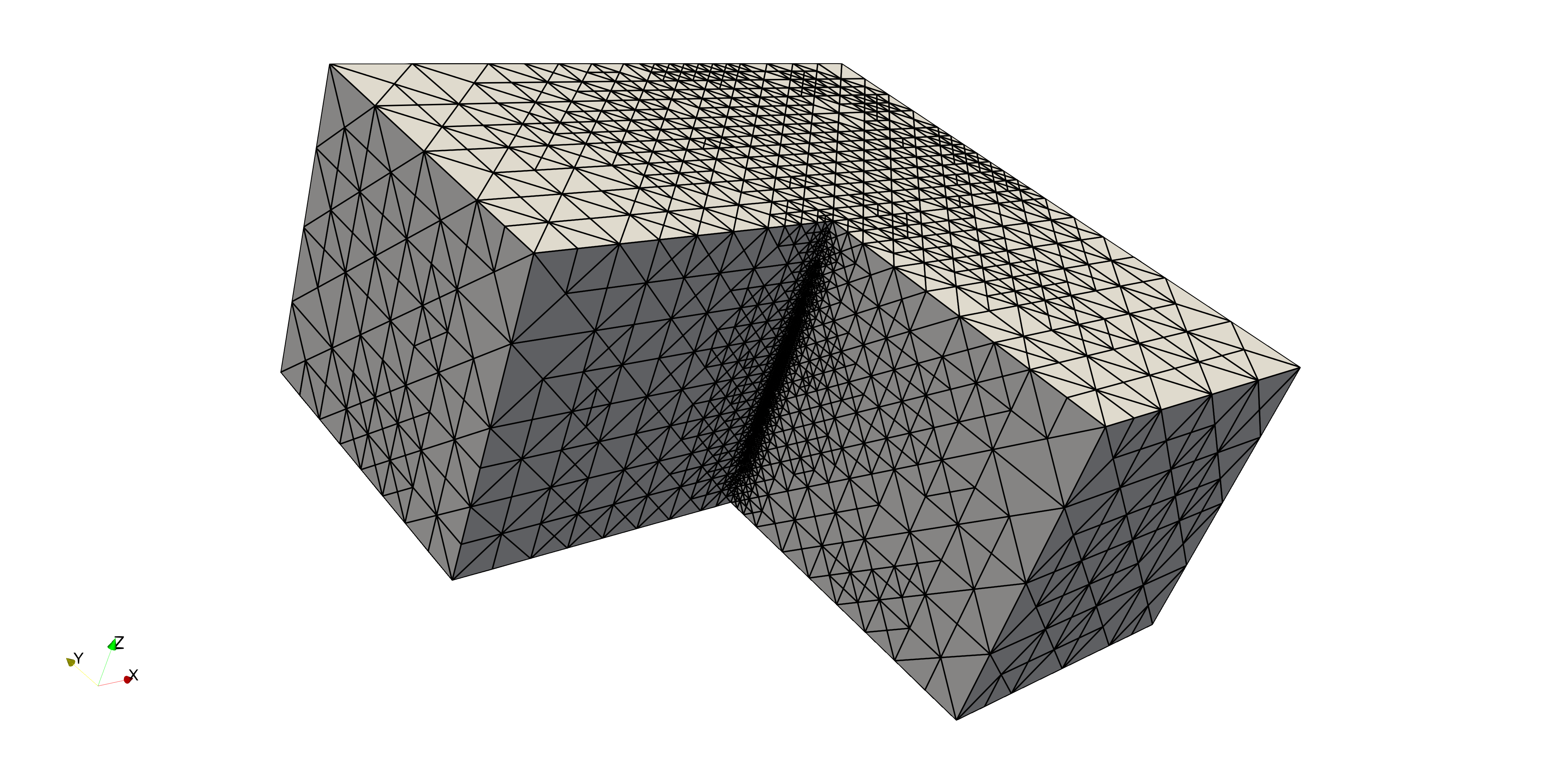}
	\end{minipage}
	\begin{minipage}{0.49\linewidth}\centering
		\includegraphics[scale=0.06,trim= 25cm 4cm 25cm 4cm,, clip]{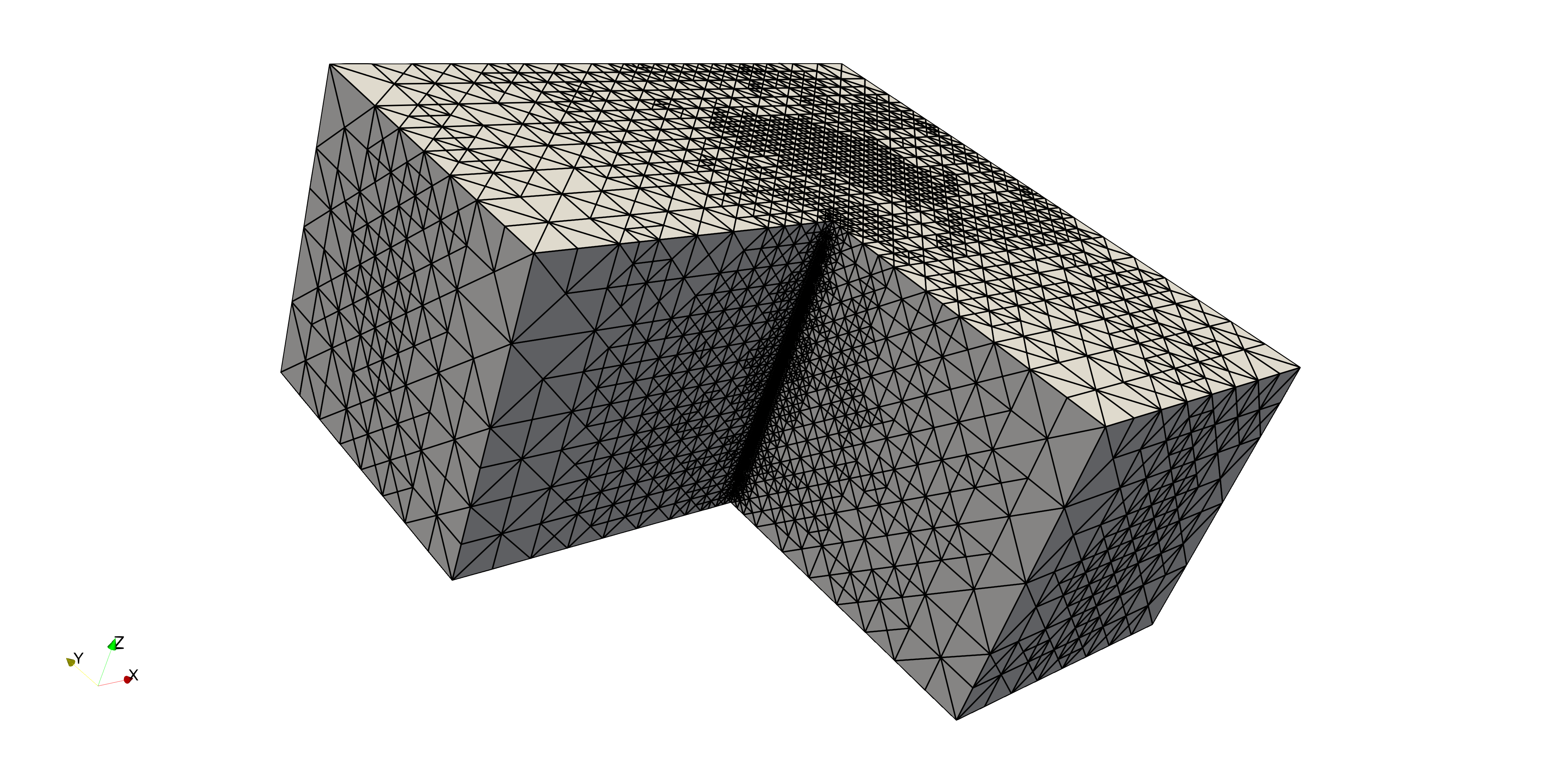}
	\end{minipage}\\
	\caption{Test 3. Intermediate adaptive meshes. Top row: meshes with $1216459$ and $3623905$ degrees of freedom using the estimator $\theta$. Bottom row: meshes with  $1447421$ and $4224335$ degrees of freedom using the estimator $\eta$. }
	\label{fig:lshape-3D-mesh-estimador-theta}
\end{figure}
\begin{figure}
	\centering
	\includegraphics[scale=0.25]{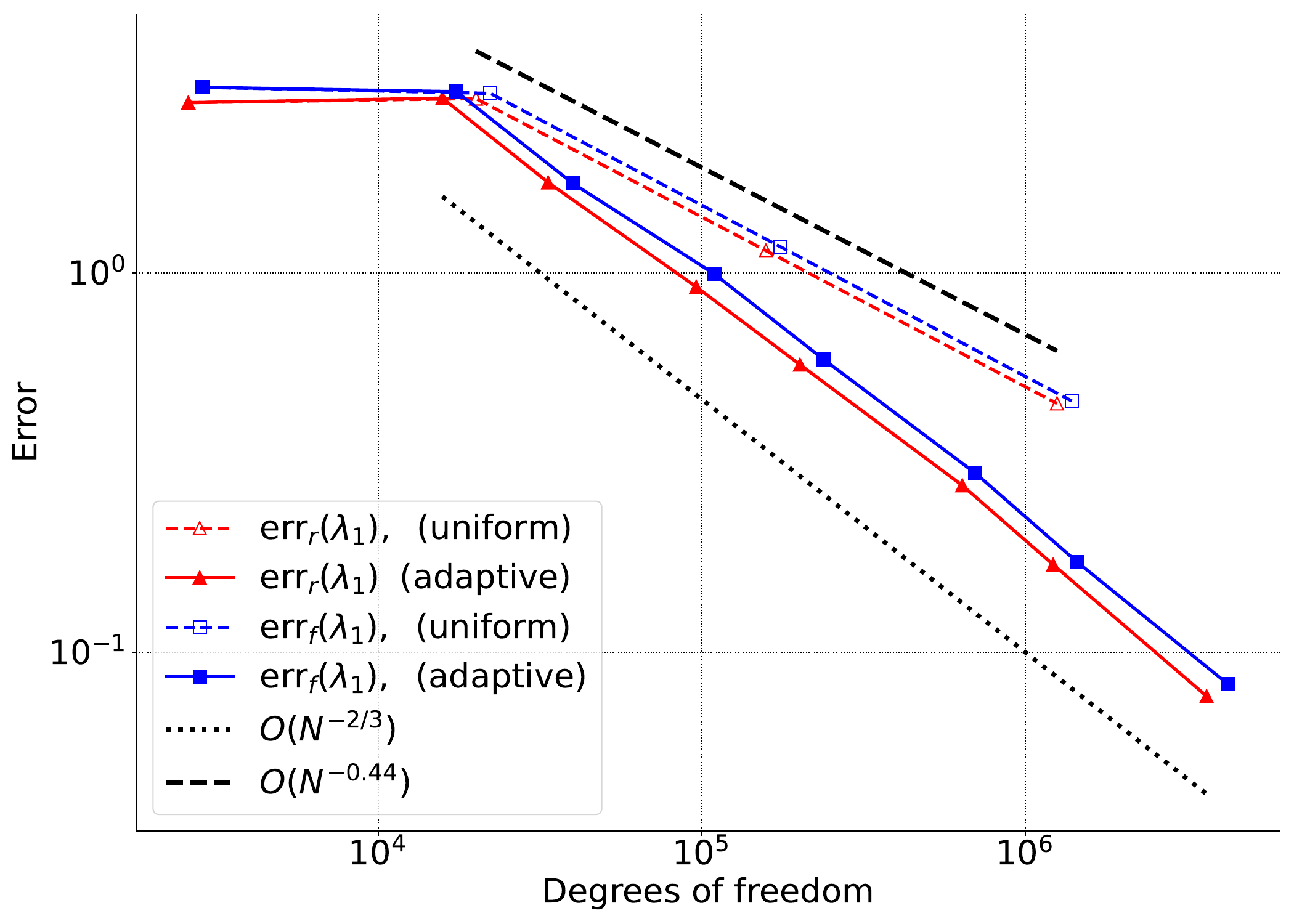}
	\caption{Test 3. Error curves for $\theta$ and $\eta$ in the three dimensional L-shaped domain compared with $\mathcal{O}(N^{-0.44})$ and $\mathcal{O}(N^{-2/3})$.}
	\label{fig:lshape3D-error}
\end{figure}
\begin{figure}
	\centering
	\begin{minipage}{0.49\linewidth}\centering
		\includegraphics[scale=0.06,trim= 25cm 4cm 25cm 4cm,, clip]{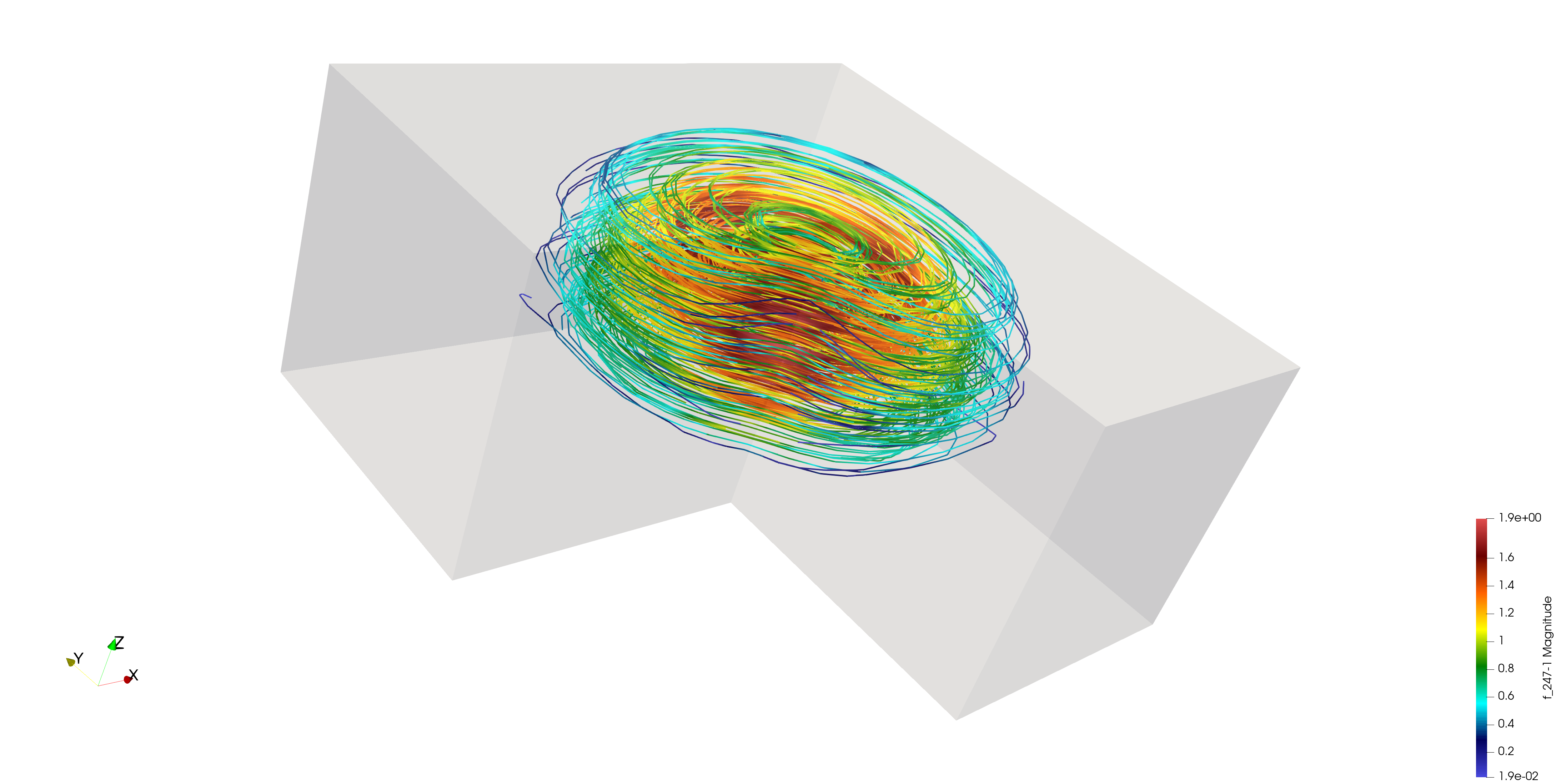}
	\end{minipage}
	\begin{minipage}{0.49\linewidth}\centering
		\includegraphics[scale=0.06,trim= 25cm 4cm 25cm 4cm,, clip]{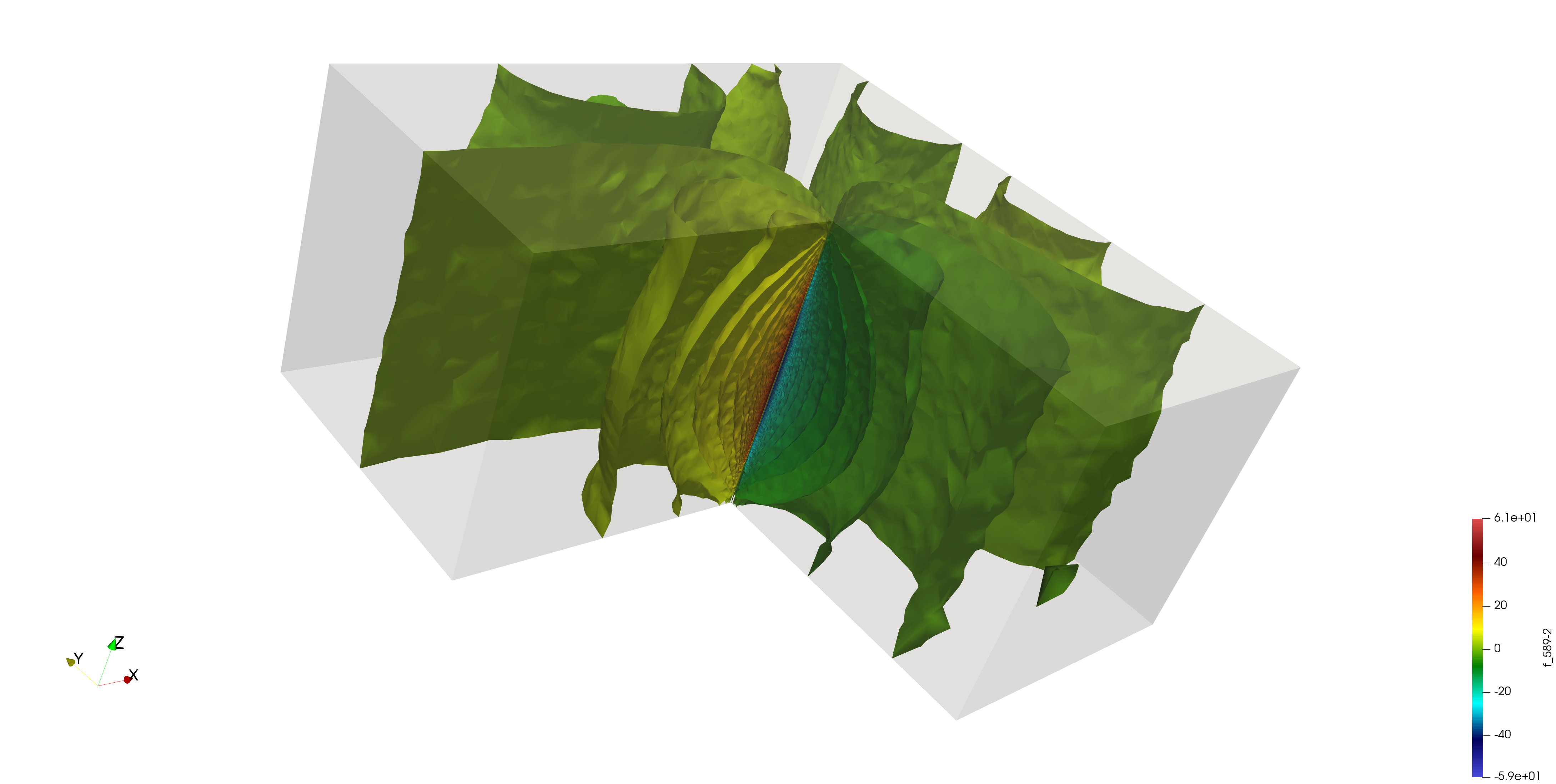}
	\end{minipage}
	\caption{Test 3. Computed velocity field streamlines and pressure isosurfaces using adaptive meshes and the $\eta$ estimator.}
	\label{fig:tshape-3D-u-p}
\end{figure}
\section{Compliance with Ethical Standards}
The authors have no conflicts of interest to declare that are relevant to the content of this article.
\bibliographystyle{siam}
\footnotesize
\bibliography{bib_LOQ}

\end{document}